\documentclass[reqno]{amsart}

\usepackage{enumitem}
\usepackage{hyperref}
\usepackage{bbm}
\usepackage{amsfonts,amsmath,amsxtra,amsthm,amssymb,latexsym}
\usepackage{ifpdf}
\usepackage{tikz}
\usepackage{color}

\newcommand*{\mailto}[1]{\href{mailto:#1}{\nolinkurl{#1}}}
\newcommand{\arxiv}[1]{\href{http://arxiv.org/abs/#1}{arXiv:#1}}

\newcommand{\msc}[1]{\href{http://www.ams.org/msc/msc2010.html?t=&s=#1}{#1}}
\newcommand{\ack}{\section*{Acknowledgments}}

\frenchspacing
\newif\iflong
\longtrue

\newtheorem{theorem}{Theorem}[section]
\newtheorem{corollary}[theorem]{Corollary}
\newtheorem{lemma}[theorem]{Lemma}

\newtheorem{remark}[theorem]{Remark}
\newtheorem{proposition}[theorem]{Proposition}
\newtheorem{hypothesis}{Hypothesis}[section]
\theoremstyle{definition}
\newtheorem{definition}[theorem]{Definition}
\newtheorem{example}[theorem]{Example}

\newcommand{\nn}{\nonumber}
\newcommand{\be}{\begin{equation}}
\newcommand{\ee}{\end{equation}}
\newcommand{\ba}{\begin{array}}
\newcommand{\ea}{\end{array}}

\newcommand{\ol}{\overline}
\newcommand{\ti}{\tilde}
\newcommand{\id}{{\mathbbm 1}}

\numberwithin{equation}{section}

 \DeclareMathOperator{\dom}{dom}
\DeclareMathOperator{\ran}{ran} 
\DeclareMathOperator{\Span}{span}\DeclareMathOperator{\ess}{ess}

\DeclareMathOperator{\ac}{ac}
\DeclareMathOperator{\loc}{loc}
\DeclareMathOperator{\sym}{sym}
\DeclareMathOperator{\vol}{vol}

\newcommand\R{{\mathbb{R}}}
\newcommand\C{{\mathbb{C}}}

\newcommand\Z{{\mathbb{Z}}}

\newcommand\gH{{\mathfrak{H}}}

\newcommand\cH{{\mathcal{H}}}
\newcommand\cA{{\mathcal{A}}}
\newcommand\cF{{\mathcal{F}}}
\newcommand\cM{{\mathcal{M}}}

\newcommand\cL{{\mathcal{L}}}
\newcommand\cG{{\mathcal{G}}}
\newcommand\cE{{\mathcal{E}}}
\newcommand\cP{{\mathcal{P}}}
\newcommand\cV{{\mathcal{V}}}
\newcommand\cI{{\mathcal{I}}}
\newcommand\cS{{\mathcal{S}}}

\newcommand\OO{{\mathcal{O}}}

\newcommand\bH{{\mathbf{H}}}
\newcommand\rh{{\mathbf{h}}}

\newcommand\rH{{\rm{H}}}
\newcommand\E{{\rm{e}}}

\newcommand\I{{\rm{i}}}

\def\wt#1{{{\widetilde #1} }}

\begin{document}

\title[Quantum graphs on antitrees]{Quantum Graphs\\ on Radially Symmetric Antitrees}

\author[A. Kostenko]{Aleksey Kostenko}
\address{Faculty of Mathematics and Physics\\ University of Ljubljana\\ Jadranska ul.\ 21\\ 1000 Ljubljana\\ Slovenia\\ and Faculty of Mathematics\\ University of Vienna\\ 
Oskar-Morgenstern-Platz 1\\ 1090 Vienna\\ Austria}
\email{\mailto{Aleksey.Kostenko@fmf.uni-lj.si};\ \mailto{Oleksiy.Kostenko@univie.ac.at}}
\urladdr{\url{http://www.mat.univie.ac.at/~kostenko/}}

\author[N. Nicolussi]{Noema Nicolussi}
\address{Faculty of Mathematics\\ University of Vienna\\
Oskar-Morgenstern-Platz 1\\ 1090 Vienna\\ Austria}
\email{\mailto{noema.nicolussi@univie.ac.at}}


\thanks{{\it Research supported by the Austrian Science Fund (FWF) 
under Grants No.\ P 28807 (A.K. and N.N.) and W 1245 (N.N.) and by the Slovenian Research Agency (ARRS) under Grant No.\ J1-9104 (A.K.).}}
\thanks{J.\ Spectral Theory, {\it  to appear}}
\thanks{\arxiv{1901.05404}}

\keywords{Quantum graph, antitree, self-adjointness, spectrum}
\subjclass[2010]{Primary \msc{34B45}; Secondary \msc{35P15}; \msc{81Q10}; \msc{81Q35}}

\begin{abstract}
We investigate spectral properties of Kirchhoff Laplacians on radially symmetric antitrees. This class of metric graphs admits a lot of symmetries, which enables us to obtain a decomposition of the corresponding Laplacian into the orthogonal sum of Sturm--Liouville operators. In contrast to the case of radially symmetric trees, the deficiency indices of the Laplacian defined on the minimal domain are at most one and they are equal to one exactly when the corresponding metric antitree has finite total volume. In this case, we provide an explicit description of all self-adjoint extensions including the Friedrichs extension. 

Furthermore, using the spectral theory of Krein strings, we perform a thorough spectral analysis of this model. In particular, we obtain discreteness and trace class criteria, a criterion for the Kirchhoff Laplacian to be uniformly positive and provide spectral gap estimates. We show that the absolutely continuous spectrum is in a certain sense a rare event, however, we also present several classes of antitrees such that the absolutely continuous spectrum of the corresponding Laplacian is  $[0,\infty)$.  
\end{abstract}

\maketitle

{\scriptsize{\tableofcontents}}

\section{Introduction}\label{sec:I}

This paper is devoted to one particular class of infinite quantum graphs, namely {\em Kirchhoff Laplacians on radially symmetric antitrees}. 
Antitrees appear in the study of {\em discrete Laplacians} on graphs at least since the 1980's (see \cite{dk} and also \cite[Section 2]{clmp}) and they attracted a considerable attention after the work of Wojciechowski \cite{woj11}. 
More precisely,  Wojciechowski used them in  \cite{woj11} (see also \cite[\S 6]{klw} and \cite{ghm}) to construct graphs of polynomial volume growth for which the (discrete) physical Laplacian is stochastically incomplete and the bottom of the essential spectrum is strictly positive, which is in sharp contrast to the manifold setting (cf. \cite{bro}, \cite{g86}, \cite{g99}). These apparent discrepancies were resolved later using the notion of intrinsic metrics, with antitrees appearing as key examples for certain thresholds (see \cite{folz, hkw13, huang,k15}). During the recent years, antitrees were also actively studied from other perspectives and we only refer to a brief selection of articles \cite{bkw15}, \cite{brke13}, \cite{clmp}, \cite{gs13}, \cite{sadel}, where further references can be found.
 
 In this paper, we consider antitrees from the perspective of quantum graphs and perform a detailed spectral analysis of the Kirchhoff Laplacian on radially symmetric antitrees. Our discussion includes characterization of self-adjointness and a complete description of self-adjoint extensions, spectral gap estimates and spectral types (discrete, singular and absolutely continuous spectrum). 
 
 Before proceeding further, let us first recall necessary definitions. 
 Let $\cG_d = (\cV, \cE)$ be a connected, simple (no loops or multiple edges) combinatorial graph. Fix a root vertex $o \in \cV$ and then order the graph with respect to the combinatorial spheres $S_n$, $n \in \Z_{\ge 0}$ (notice that $S_0=\{o\}$). 
 
 \begin{definition}\label{def:AT} 
 A connected simple rooted (infinite) graph $\cG_d$ is called an {\em antitree} if 
every vertex in $S_n$, $n\ge 1$\footnote{By definition, the root  $o$ is connected to all vertices in $S_1$ and no vertices in $S_k$, $k\ge2$.}, is connected to all  vertices in $S_{n-1}$ and $S_{n+1}$ and no vertices in $S_k$ for all $|k-n|\neq 1$.
 \end{definition}
 
 Notice that combinatorial antitrees admit radial symmetry and every antitree is uniquely determined by its sphere numbers $s_n =\#S_n$, $n\ge 0$ (see Figure \ref{fig:antitree}).  
 
 If every edge of $\cG_d$ is assigned a length $|e|\in (0,\infty)$, then $\cG=(\cG_d,|\cdot|)$ is called a {\em metric graph}. Upon identifying each edge $e$ with the interval of length $|e|$, $\cG$ may be considered as a ``network" of intervals glued together at the vertices. In the following we shall denote combinatorial and metric antitrees  by $\cA_d$ and, respectively, $\cA$. The analogue of the Laplace--Beltrami operator for metric graphs is the {\em Kirchhoff Laplacian} $\bH$ (or Kirchhoff--Neumann Laplacian, see Section \ref{ss:II.02}), also called a {\em quantum graph}. It acts as an edgewise (negative) second derivative $f_e \mapsto -\frac{d^2}{dx_e^2} f_e$, $e \in \cE$, and is defined on edgewise $H^2$-functions satisfying continuity and Kirchhoff conditions at the vertices  (we refer to \cite{bcfk06, bk13, ekkst08, EKMN, kn19, post} for more information and references).

\begin{figure}
	\begin{center}
		\begin{tikzpicture}    [%
		,scale=.6
		,every node/.style={scale=.6}]
		\draw								(0,0) -- (-1, 1.5) ;
		\draw								(0,0) -- (1, 1.5) ;
		
		\draw								(-1,1.5) -- (-2, 3) ;
		\draw								(-1,1.5) -- (0, 3) ;
		\draw								(-1,1.5) -- (2, 3) ;
		\draw								(1,1.5) -- (-2, 3) ;
		\draw								(1,1.5) -- (0, 3) ;
		\draw								(1,1.5) -- (2, 3) ;
		
		\draw								(-2, 3) -- (-3, 4.5);
		\draw								(-2, 3) -- (-1, 4.5);
		\draw								(-2, 3) -- (1, 4.5);
		\draw								(-2, 3) -- (3, 4.5);
		\draw								(0, 3) -- (-3, 4.5);
		\draw								(0, 3) -- (-1, 4.5);
		\draw								(0, 3) -- (1, 4.5);
		\draw								(0, 3) -- (3, 4.5);
		\draw								(2, 3) -- (-3, 4.5);
		\draw								(2, 3) -- (-1, 4.5);
		\draw								(2, 3) -- (1, 4.5);
		\draw								(2, 3) -- (3, 4.5);
		
		\draw [dashed, gray]			(-3, 4.5) -- (-4, 6);
		\draw [dashed , gray]			(-3, 4.5) -- (-2, 6);
		\draw [dashed, gray]			(-3, 4.5) -- (0, 6);
		\draw [dashed , gray]			(-3, 4.5) -- (2, 6);
		\draw [dashed , gray]			(-3, 4.5) -- (4, 6);
		\draw [dashed , gray]			(-1, 4.5) -- (-4, 6);
		\draw [dashed , gray]			(-1, 4.5) -- (-2, 6);
		\draw [dashed , gray]			(-1, 4.5) -- (0, 6);
		\draw [dashed , gray]			(-1, 4.5) -- (2, 6);
		\draw [dashed , gray]			(-1, 4.5) -- (4, 6);
		\draw [dashed , gray]			(1, 4.5) -- (-4, 6);
		\draw [dashed , gray]			(1, 4.5) -- (-2, 6);
		\draw [dashed , gray]			(1, 4.5) -- (0, 6);
		\draw [dashed , gray]			(1, 4.5) -- (2, 6);
		\draw [dashed , gray]			(1, 4.5) -- (4, 6);
		\draw [dashed , gray]			(3, 4.5) -- (-4, 6);
		\draw [dashed , gray]			(3, 4.5) -- (-2, 6);
		\draw [dashed , gray]			(3, 4.5) -- (0, 6);
		\draw [dashed , gray]			(3, 4.5) -- (2, 6);
		\draw [dashed , gray]			(3, 4.5) -- (4, 6);
		
		\draw	[thin, ->, dashed]			(4, 1.5) -- (1.5, 1.5);
		\draw	[thin, ->, dashed]			(4, 3) -- (2.5, 3);
		\draw	[thin, -> , dashed]			(4, 4.5) -- (3.5, 4.5);
		\draw	[thin, -> , dashed]			(4, 0) -- (0.5, 0);
		\node  at (4.5, 0) {\Large $S_0$};
		\node  at (4.5, 1.5) {\Large $S_1$};
		\node  at (4.5, 3) {\Large $S_2$};
		\node  at (4.5, 4.5) {\Large $S_3$};	
		\filldraw[color=blue]
		(0,0) circle (2pt) 
		(-1, 1.5) circle (2pt)
		(1, 1.5) circle (2pt)
		(-2, 3) circle (2pt)
		(0, 3) circle (2pt)
		(2, 3) circle (2pt)
		(-3, 4.5) circle (2pt)
		(-1, 4.5) circle (2pt)
		(1, 4.5) circle (2pt)
		(3, 4.5) circle (2pt);
		\end{tikzpicture}
	\end{center}
	\caption{Antitree with sphere numbers $s_n = n+1$.}\label{fig:antitree}
\end{figure}

 Our approach employs the high degree of symmetry and this naturally demands symmetry assumptions also on the choice of edge lengths \iflong(Remark \ref{rem:infdi}(i) shows this is indeed necessary for our results)\fi: 
 
\begin{hypothesis}\label{hyp:symm}We shall assume that the metric antitree $\cA$ is {\em radially symmetric}, that is, for each $n\ge 0$, all edges connecting combinatorial spheres $S_n$ and $S_{n+1}$ have the same length, say $\ell_n >0$. 
 \end{hypothesis}
 
One of our main motivations is Lemma 8.9 in \cite{kn19}. More precisely, the symmetry of antitrees structure turned out useful in studying isoperimetric estimates and we were even able to compute explicitly the bottom of the essential spectrum of some (non-equilateral) quantum graphs (see \cite[\S 8.2]{kn19}). Despite an enormous interest in quantum graphs during the last two decades, to the best of our knowledge a detailed discussion of their spectral properties without further restrictions on edges lengths (for instance, one of the most common assumptions is $\inf_{e\in\cE}|e|>0$) has so far been obtained only for a few models and the most studied ones are {\em radially symmetric trees} (see e.g. \cite{bf,car00, ess, ns00, ns01, sol04}). However, the assumption that $\cG$ is a tree prevents many interesting phenomena to happen (for instance, by \cite[Lemma 8.1]{kn19}, in this case the Kirchhoff Laplacian, actually, its Friedrichs extension, is boundedly invertible if and only if $\sup_{e \in \cE}|e|< \infty$; in fact, this condition is only necessary in general \cite{sol03}). Hence our goal in this work is to present a model which can be thoroughly analyzed but still exhibits in some sense rich spectral behavior. 

Let us now briefly describe the content of the paper and our main results. To some extent we follow the approach developed by Naimark and Solomyak for radially symmetric trees (see \cite{ns00, ns01} and also \cite{car00, sol03, sol04}) and use some ideas from \cite{brke13}, where discrete Laplacians on radially symmetric ``weighted" graphs have been analyzed. However, some modifications are necessary since comparing to \cite{car00, ns01, sol04} we are dealing with a completely different class of graphs (antitrees have a lot of cycles) and, in contrast to discrete Laplacians \cite{brke13}, we have to deal with unbounded operators (even when restricting to compact subsets of a metric graph) and in this case a search for {\em reducing subspaces} is a rather complicated task 
\footnote{After the submission of our paper we learned about the preprint \cite{bl19} dealing with a similar decomposition in the general case of family preserving metric graphs, which includes antitrees as a particular example. However, the main focus of \cite{bl19} is on the existence of a decomposition in a rather general situation, whereas in our work we use it mainly as a starting point for the spectral analysis.}

 First of all, the radial symmetry of $\cA$ naturally hints to consider the space $\cF_{\sym}$ of radially symmetric functions (w.r.t. the root $o \in \cV$). It turns out that $\cF_{\sym}$ is indeed reducing for the pre-minimal Kirchhoff Laplacian $\bH_0$ (this means that $\bH_0$ as well as its closure $\bH = \overline{\bH}_0$, the minimal Kirchhoff Laplacian, commutes with the orthogonal projection onto $\cF_{\sym}$) and its restriction $\bH_0\upharpoonright {\cF_{\sym}}$ is unitarily equivalent to a pre-minimal Sturm--Liouville operator $\rH_0$ defined in $L^2((0,\cL);\mu)$ by the differential expression
\be
\tau := -\frac{1}{\mu(t)}\frac{d}{dt}\mu(t)\frac{d}{dt}, \qquad  \mu(t) = \sum_{n\ge 0}s_ns_{n+1}\id_{[t_{n},t_{n+1})}(t),
\ee
and subject to the Neumann boundary condition at $x=0$. Here $t_0 = 0$, $t_n = \sum_{k\le n-1}\ell_k$ for all $n\ge 1$ and $\cL = \sum_{n\ge 0}\ell_n$ (see Section \ref{ss:3.1}). 
Moreover, the remaining part of $\bH = \overline{\bH}_0$ decomposes into an infinite sum of self-adjoint (regular) Sturm--Liouville operators (see Theorem \ref{th:decomp}; its proof is given in Sections \ref{sec:II} and \ref{sec:III}). This decomposition is the starting point of our analysis since it enables us to investigate $\bH$ using the well-developed spectral theory of Sturm--Liouville operators. For example, this immediately provides a self-adjointness criterion together with a complete description of self-adjoint extensions of $\bH$ (see Section \ref{sec:sa}). Namely, since all the summands in \eqref{eq:Hdecomp} except $\rH = \overline{\rH}_0$ are self-adjoint operators, we reduce the problem to the study of the operator $\rH_0$. Employing Weyl's limit point/limit circle classification, we obtain in Theorem \ref{th:SA} that deficiency indices of $\bH$ are at most $1$. Moreover, $\bH$ is self-adjoint if and only if $\cA$ has {\em infinite total volume}, i.e.
\[
	\vol(\cA) := \sum_{e \in \cE} |e| = \sum_{n \ge 0} s_n s_{n+1} \ell_n = \int_0^\cL \mu(t)dt = \infty. 
\]
If $\cA$ has finite total volume, $\vol(\cA) < \infty$, all self-adjoint extensions can be described through a single boundary condition (in particular, this also provides a description of the domain of the Friedrichs extension). Moreover, all of their spectra are purely discrete and eigenvalues satisfy Weyl's law (see Corollary \ref{cor:finvol}). 

If $\vol(\cA) =  \infty$, i.e., $\bH$ is self-adjoint, it was already observed in \cite[Section 8.2]{kn19} that $\sigma(\bH)$ is not necessarily discrete. In Section \ref{sec:discr}, we characterize the cases when $\bH$ has purely discrete spectrum and when its resolvent $\bH^{-1}$ belongs to the trace class  (see Theorem \ref{th:discr} and Theorem \ref{th:trace}). Let us stress that our main tool is the spectral theory of Krein strings \cite{kakr74} (see also \cite{dymc76}). More precisely, by a simple change of variables $\rH$ can be transformed into the string form (see \eqref{eq:wtH}) and then one simply needs to use the corresponding results from \cite{kakr58,kakr74}. 
Section \ref{sec:specest} is devoted to {\em spectral estimates}, i.e., the investigation of the bottom of the spectrum $\lambda_0(\bH)$ of $\bH$, $\lambda_0(\bH) := \inf \sigma(\bH)$. This can be solved again by using the results of Kac and Krein from \cite{kakr58}. More precisely, we characterize the positivity of $\lambda_0(\bH)$ (Theorem \ref{th:SpEst} and Theorem \ref{th:SpEstEss}) and derive two-sided estimates (Remark \ref{rem:SpEst01}). Let us also mention at this point that the decomposition \eqref{eq:Hdecomp} indicates the way to compute the isoperimetric constant of a radially symmetric antitree (see Theorem \ref{th:alpha}) and hence it is interesting to compare Theorem \ref{th:SpEst} and Theorem \ref{th:SpEstEss}  with the estimates obtained recently in \cite{kn19}  (see Remark \ref{rem:cheeger}).

To our best knowledge, the theory of Krein strings is applied in the context of quantum graphs for the first time. In fact, most of the analysis in Sections \ref{sec:discr} and \ref{sec:specest} can be performed with the help of Muckenhoupt inequalities \cite{muc} since the questions addressed in these sections allow a variational reformulation (in particular, Solomyak used this approach in \cite{sol04} to investigate quantum graphs on radially symmetric trees). However, spectral theory of strings enables us to treat more subtle problems (like the study of the structure of the essential spectrum of $\bH$). In Section \ref{sec:ACspec}, we employ the recent results from \cite{bd17} and \cite{ek18} on the absolutely continuous spectrum of strings to construct several classes of antitrees with absolutely continuous spectrum supported on $[0,\infty)$.  For instance, if 
\begin{align}
	\inf_{n\ge 0} \ell_n & >0, & \sum_{n=1}^\infty \Big ( \frac{s_{n+2}}{s_n} -1 \Big )^2 & < \infty,
\end{align}
 then $\sigma_{\ac} (\bH) = [0,\infty)$ (see Theorem \ref{cor:ac01}). Notice that to prove this claim we employ the analog of the Szeg\H{o} theorem for strings recently established by Bessonov and Denisov \cite{bd17}. Antitrees with polynomially growing sphere numbers satisfy the last assumption, however, it can be shown that in this case the usual trace class arguments do not apply (see Remark \ref{rem:8.4}). Let us also emphasize that similar to the case of trees quantum graphs typically have purely singular spectrum in the case of antitrees (see Section \ref{sec:SCspec}). However, to the best of our knowledge, the only known examples of quantum graphs on trees having nonempty absolutely continuous spectrum are {\em eventually periodic radially symmetric trees} (see \cite[Theorem 5.1]{ess}).  
 
In the final section we demonstrate our results by considering two special classes of antitrees and complement the results of \cite[Section 8.2]{kn19}. In Section \ref{ss:expAT} we consider antitrees with exponentially increasing sphere numbers and demonstrate that in this case there are a lot of similarities with the spectral properties of quantum graphs on radially symmetric trees. Antitrees with polynomially increasing sphere numbers are treated in Section \ref{ss:polAT} and this class of quantum graphs exhibits a number of interesting phenomena. For example, one can show a transition from absolutely continuous spectrum supported on $[0,\infty)$ to purely discrete spectrum (see Corollary \ref{cor:Hqs}).

\iflong{}In Appendix \ref{app:atan2} we discuss the eigenvalues of a special class of regular Sturm--Liouville operators and in Appendix \ref{sec:ATbndgeom} we collect several examples of antitrees whose degree function takes finitely many values and the absolutely continuous spectrum of the corresponding Laplacian is  $[0,\infty)$.

Finally, let us stress that our approach based on spectral theory of Krein strings enables us (without almost no effort) to extend most of the results obtained in this paper to arbitrary second order differential operators (namely, one can replace the second derivative by a weighted Sturm--Liouville operator $ -\frac{1}{r(x)}\frac{d}{dx}p(x)\frac{d}{dx} $ or even by a string differential expression $-\frac{d^2}{d\omega(x)dx}$), of course keeping the radial symmetry assumption on the coefficients.\fi

\section{Decomposition of $L^2(\cA)$}\label{sec:II}

\subsection{Auxiliary subspaces}\label{ss:II.1}
Let $\cA$ be a metric radially symmetric antitree with sphere numbers $\{s_n\}_{n\ge 0}$ and lengths $\{\ell_n\}_{n\ge 0}$. Upon identifying every edge $e$ with a copy of the interval $\cI_e = [0,|e|]$  and considering $\cA$ as the union of all edges glued together at certain endpoints, one can introduce the Hilbert space $L^2(\cA)$ of functions $f\colon \cA\to \C$ as $L^2(\cA) = \oplus_{e} L^2(e)$. Next, denote
  \begin{align*}
  &t_n := \sum_{j=0}^{n-1} \ell_j, &I_n := [t_n, t_{n+1}),
  \end{align*}
  and let $\cH_n := \C^{s_n s_{n+1}}$, $n\ge 0$. Notice that $s_n s_{n+1}$ is the number of edges in $\cE_n^+$, where $\cE_n^+$ is the set of edges connecting $S_n$ with $S_{n+1}$. Enumerating the vertices in each sphere, let each entry $a_{ij}$ of some ${\bf a} = (a_{ij})_{i,j} \in \cH_n$ correspond to a coefficient of the edge $e \in \cE^+_n$ connecting the $i$-th vertex of $S_n$ with the $j$-th vertex of $S_{n+1}$. 
 Moreover, we can identify each function $f \colon \cA\to \C$ in a natural way with the sequence of functions ${\bf f} = ({\bf f}^n)_{n \geq 0}$ such that ${\bf f}^n \colon I_n \to \cH_n$. In fact, ${\bf f}^n$ is given by
  \begin{align}\label{eq:bff_n}
	{\bf f}^n_{i,j} (t) := f(x_{ij}(t)), \quad t \in I_n,  
  \end{align}
 where $x_{ij}(t)$ is the unique $x \in \cA$, such that $|x| = t$ and $x$ lies on the edge connecting the $i$-th vertex in $S_n$ with the $j$-th vertex of $S_{n+1}$. Notice that the map 
 \be\label{eq:U}
 \begin{array}{cccc}
 U\colon & L^2(\cA) &\to & \oplus_{n\ge 0}L^2(I_n;\cH_n) \\[2mm]
  & f & \mapsto & {\bf f} = ({\bf f}^n)_{n \geq 0}
 \end{array}
 \ee
is an isometric isomorphism since 
 \begin{equation} \label{eq:prodvec}
	 \big(f, g\big)_{L^2(\cA)} = \sum_{n \geq 0} \int_{I^n} ({\bf f}^n (t),{\bf g}^n (t) )_{\cH_n} \; dt
 \end{equation}
 for all $f, g \in L^2(\cA)$. 
 Next we introduce the following subspaces:
 \begin{align*}
	 &\cH^{\sym}_n := \big\{ {\bf a}  \in \cH_n | \; a_{ij} = a_{11} \; \forall i,j \big\},  \\[2mm]
	 &\cH^+_n := \Big\{ {\bf a} \in \cH_n | \; a_{ij} = a_{i1}  \; \forall i,j, \text{ and }  \sum_{i,j} a_{ij} = \sum_{i} a_{i1} =0 \Big\}, \\
	 &\cH^-_n := \Big\{ {\bf a} \in \cH_n | \; a_{ij} = a_{1j}  \; \forall i,j,  \text{ and }   \sum_{i,j} a_{ij} = \sum_{j} a_{1j} = 0\Big\}, \\
	 &\cH_n^0 :=\Big \{ {\bf a}  \in \cH_n | \; \sum_{j} a_{ij} = 0 \; \forall i \text{ and } \sum_{i} a_{ij} = 0 \; \forall j  \Big\}. 
 \end{align*}
 It is straightforward to check that the above spaces are mutually orthogonal and their dimensions are given by
 \begin{align*}
	 &\dim(\cH^{\sym}_n) = 1, &&\dim(\cH_n^0) =  (s_{n}-1) (s_{n+1} -1), \\[1mm]
	 &\dim(\cH^+_n) = s_n -1, &&\dim(\cH^-_n) = s_{n+1} -1.
 \end{align*}
 Hence $\cH_n$ admits the decomposition
 \begin{equation} \label{eq:decompfinite}
	 \cH_n = \begin{cases} \cH^{\sym}_n \oplus \cH^-_n, & n=0\\
	 \cH^{\sym}_n \oplus \cH^+_n \oplus \cH^-_n \oplus \cH_n^0, & n\ge 1 \end{cases}.
 \end{equation}
Notice also that if $s_n = 1$ for some $n\ge 1$, then $\cH^+_n = \cH_n^0 = \cH_{n-1}^0 = \cH^-_{n-1} = \{0\}$.

One can also describe the above subspaces by identifying $\cH_n$ with the tensor product $\C^{s_n}\otimes \C^{s_{n+1}}$. For example, setting 
\begin{align}\label{eq:1n}
{\bf 1}_{s_n} & := \big(\underbrace{1,1,\dots,1}_{s_n}\big)\in \C^{s_n}, & {\bf 1}^n & :={\bf 1}_{s_n}\otimes {\bf 1}_{s_{n+1}}\in \cH_n,
\end{align}
for all $n\ge 0$, 
 we get
 \be\label{eq:Hsym1n}
 \cH^{\sym}_n = \Span\{{\bf 1}^n\}.
 \ee
Moreover, denote
\[
\omega_n := \E^{2\pi \I/s_n},\quad n\ge 0,
\]
and set
\be\label{eq:a_sn}
{\bf a}^j_{s_n} := \{\omega_n^j,\dots,\omega_n^{j(s_n-1)},1\}\in \C^{s_n},\quad j\in \{1,\dots,s_n\}.
\ee
Notice that $\{{\bf a}^j_{s_n}\}_{j=1}^{s_n}$ forms an orthogonal basis in $\C^{s_n}$ for all $n\ge 0$. In particular, ${\bf a}^{s_n}_{s_n} = {\bf 1}_{s_n}$ and $\|{\bf a}^j_{s_n}\|^2 = s_n$. Hence setting
\be\label{eq:a_nij}
{\bf a}_n^{i,j}:= {\bf a}^i_{s_{n}}\otimes {\bf a}^j_{s_{n+1}} \in \cH_n,
\ee
where $1\le i\le s_n$ and $1\le j\le s_{n+1}$, we easily get
\be\label{eq:nHn}
\begin{split}
 \cH^+_n &= \Span\big\{{\bf a}^i_{s_{n}}\otimes {\bf 1}_{s_{n+1}} |\, 1\le i < s_n\big\} = \Span\big\{{\bf a}_n^{i,s_{n+1}} |\, 1\le i < s_n\big\},\\
 \cH^-_n &= \Span\big\{{\bf 1}_{s_{n}}\otimes {\bf a}^j_{s_{n+1}} |\, 1 \le j < s_{n+1}\big\} =\Span\big\{{\bf a}_n^{s_n,j} |\, 1\le j < s_{n+1}\big\},\\
\cH_n^0 &= \Span\big\{{\bf a}_n^{i,j}|\, 1\le i< s_n,\ 1\le j < s_{n+1}\big\}.
\end{split}
\ee
Finally, observe that 
\be\label{eq:norm_a}
\|{\bf a}_n^{i,j}\|^2 = s_ns_{n+1}
\ee
for all $1\le i\le s_n$, $1\le j\le s_{n+1}$ and $n\ge 0$.

\subsection{Definition of the subspaces}\label{ss:II.2}

The decomposition \eqref{eq:decompfinite} naturally induces a decomposition of the Hilbert space $L^2(\cA)$. 
First consider the subspace 
\be\label{eq:Fsym2}
\cF_{\sym} := \{f \in L^2(\cA)| \; {\bf f}^n\colon I_n\to \cH^{\sym}_n ,\  n \geq 0 \}.
\ee
Clearly, it consists of functions which depend only on the distance to the root:
\be\label{eq:Fsym}
\cF_{\sym} = \{f\in L^2(\cA)|\, f(x) = f(y)\ \text{if}\ |x|=|y|\}.
\ee
Moreover, its orthogonal complement is given by
\begin{align}
\cF_{\sym}^\perp 
&=\{f \in L^2(\cA)| \; {\bf f}^n\colon I_n\to (\cH^{\sym}_n)^\perp ,\  n \geq 0 \}\label{eq:FsymPerp}\\[2mm]
&=\Big\{f \in L^2(\cA)| \; \sum_{e\in \cE_n^+} f_e \equiv 0,\  n \geq 0 \Big\}.\nn 
\end{align}

Next we need to decompose $\cF_{\sym}^\perp$. 
 Set
\be\label{eq:cFn0}
\cF_n^0: =  \{f \in L^2(\cA)| \; {\bf f}^{n}\colon I_n\to \cH_{n}^0;\ {\bf f}^{k} \equiv 0, \; k \neq n\}
\ee
for all $n\ge 1$. Taking into account the definition of $\cH_{n}^0$, it is not difficult to see that 
\begin{align*}
	\cF_n^0 = \Big\{f \in L^2(\cA)|\, f\equiv 0\ \text{on}\ \cA\setminus\cE_n^+;\ \sum_{e \in \cE^+_v}f_e =  \sum_{e \in \cE^-_u}f_e \equiv 0 \; \forall v \in S_n, u \in S_{n+1} \Big\}. 
\end{align*}
Here, for every $v \in \cV$, $\cE^+_v$ and $\cE^-_v$ denote the edges connecting $v$ with the next and, respectively, previous combinatorial spheres.

We need to be more careful with the remaining part since our aim is to find reducing subspaces for the quantum graph operator $\bH$. For every $v\in \cV\setminus o$, define the subspace $\wt \cF_v$ consisting of functions which vanish away of $\cE_v$, where $\cE_v$ is the set of edges emanating from $v$. Moreover, on the corresponding star $\cE_v$ they depend only on the distance to the root, that is,
\be\label{eq:Fv}
\wt \cF_v := \big\{f\in L^2(\cA)|\, f\equiv 0\ \text{on}\ \cA\setminus\cE_v;\ f(x)=f(y) \ \text{for a.e.}\ x,y\in\cE_v, |x|=|y|\big\}.
\ee 
Notice that $\wt \cF_v$ and $ \wt \cF_u$ are orthogonal for $u\neq v$ if $u$ and $v$ are not adjacent vertices. 
 Next for all $n\ge 1$ consider the spaces
\be\label{eq:wtFn}
	\wt{\cF}_n :=\bigoplus_{v\in S_n}\wt\cF_v,\qquad n\ge 1,
\ee
and
\be\label{eq:wtFn'}
	\cF_n := \wt{\cF}_n\ominus \cF_{\sym} =  
	\Big\{f \in \wt{\cF}_n| \; \sum_{e \in \cE_m^+} f_e \equiv 0,\  m \geq 0 \Big\}.
\ee
Notice that with respect to the decomposition \eqref{eq:decompfinite}, we have
 \begin{align}
	\cF_n =  \{f \in L^2(\cA)| \; {\bf f}^{n-1}\colon I_{n-1} \to \cH_{n-1}^-, \; {\bf f}^{n}\colon I_n\to \cH_n^+;  {\bf f}^{m}\equiv 0, \;  m \neq n-1, n \}.
 \end{align}
 
Thus, we arrive at the following result.

\begin{lemma}\label{lem:dec1}
	The Hilbert space $L^2(\cA)$ admits the decomposition
 \begin{equation} \label{eq:basicedecomp}
 L^2(\cA) = \cF_{\sym} \oplus \bigoplus_{n \geq 1}\cF_n \oplus \bigoplus_{n \geq 1} \cF_n^0. 
 \end{equation}
 \end{lemma}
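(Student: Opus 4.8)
The plan is to reduce the statement to the fiberwise decomposition \eqref{eq:decompfinite} by transporting everything through the isometric isomorphism $U$ of \eqref{eq:U}, which identifies $L^2(\cA)$ with $\bigoplus_{n\ge 0}L^2(I_n;\cH_n)$. Each candidate subspace has already been described by conditions on the fibers ${\bf f}^n$: the space $\cF_{\sym}$ consists of the $f$ with ${\bf f}^n$ valued in $\cH^{\sym}_n$ for all $n$; the space $\cF_n^0$ of the $f$ supported on level $n$ with ${\bf f}^n$ valued in $\cH_n^0$; and $\cF_n$ of the $f$ supported on levels $n-1$ and $n$, with ${\bf f}^{n-1}$ valued in $\cH^-_{n-1}$ and ${\bf f}^n$ valued in $\cH^+_n$. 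Thus the whole assertion is a bookkeeping statement about how the finite-dimensional summands in \eqref{eq:decompfinite} are grouped.

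First I would lift \eqref{eq:decompfinite} from $\cH_n$ to $L^2(I_n;\cH_n)$, which gives
\[
L^2(I_n;\cH_n)=L^2(I_n;\cH^{\sym}_n)\oplus L^2(I_n;\cH^+_n)\oplus L^2(I_n;\cH^-_n)\oplus L^2(I_n;\cH_n^0)
\]
for $n\ge 1$, the $\cH^+_n$ and $\cH_n^0$ summands being absent when $n=0$ since $s_0=1$ forces $\dim\cH^+_0=\dim\cH_0^0=0$. Summing over $n$ and regrouping the summands by type yields
\begin{align*}
L^2(\cA)&\cong\bigoplus_{n\ge 0}L^2(I_n;\cH^{\sym}_n)\ \oplus\ \bigoplus_{m\ge 0}L^2(I_m;\cH^-_m)\\
&\quad\oplus\ \bigoplus_{n\ge 1}L^2(I_n;\cH^+_n)\ \oplus\ \bigoplus_{n\ge 1}L^2(I_n;\cH_n^0).
\end{align*}
The first family reassembles to $\cF_{\sym}$ and the fourth to $\bigoplus_{n\ge 1}\cF_n^0$. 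The one point needing care is the pairing of the middle two families into $\bigoplus_{n\ge 1}\cF_n$: because of the index shift in the definition of $\cF_n$, the summand $L^2(I_m;\cH^-_m)$ at level $m\ge 0$ combines with $L^2(I_{m+1};\cH^+_{m+1})$ at level $m+1$ to form $\cF_{m+1}$. Letting $m$ range over $\{0,1,2,\dots\}$ exhausts every $\cH^-_m$ summand ($m\ge 0$) and every $\cH^+_n$ summand ($n\ge 1$), which is exactly the range occurring above --- no $\cH^+_0$ term is present --- so this pairing is a bijection onto the spaces $\cF_n$, $n\ge 1$.

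It then remains to record orthogonality and completeness, both of which follow at once. Completeness holds because \eqref{eq:decompfinite} splits every ${\bf f}^n$ into its components, and the reassembly just described places each component into exactly one of the listed subspaces. Orthogonality follows from the inner product formula \eqref{eq:prodvec} together with the mutual orthogonality of $\cH^{\sym}_n,\cH^+_n,\cH^-_n,\cH_n^0$: any two of the subspaces in \eqref{eq:basicedecomp} can overlap only at a common level, and there their fibers lie in orthogonal summands of the corresponding $\cH_n$ (for example $\cF_n$ and $\cF_{n+1}$ share only level $n$, where their fibers lie in $\cH^+_n$ and $\cH^-_n$ respectively). I expect no genuine obstacle beyond this index bookkeeping and the $n=0$ degeneracy.
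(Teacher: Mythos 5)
Your proof is correct and takes essentially the same approach as the paper: both arguments transport everything through $U$ and reduce the claim to the fiberwise decomposition \eqref{eq:decompfinite}, with orthogonality read off from \eqref{eq:prodvec} and the key bookkeeping point being the index shift that pairs the $\cH^-_m$ component with the $\cH^+_{m+1}$ component inside $\cF_{m+1}$. The only difference is presentational — the paper checks completeness by splitting an arbitrary function supported on a single edge $e\in\cE_n^+$ via the projections $\cP_n^j$, while you regroup the summands of the lifted decomposition globally — but this is the same computation in either form.
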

 
\begin{proof}
	The orthogonality of the subspaces in \eqref{eq:basicedecomp} follows directly from \eqref{eq:prodvec} and \eqref{eq:decompfinite}. Hence we only need to show that every $f \in L^2(\cA)$ is contained in the right hand side of \eqref{eq:basicedecomp}. Since $L^2(\cA) = \oplus_{e\in\cE}L^2(e)$, it suffices to prove this claim in the case when $f$ is zero except on a single edge $e \in \cE$. Suppose that $e \in \cE^+_n$ for some $n\ge 0$. Then
for almost every $t \in I_n$ we have
	\[
		{\bf f}^n (t) = \cP_n^{\sym}( {\bf f}^n (t)) + \cP_n^+ ( {\bf f}^n (t)) + \cP_n^- ( {\bf f}^n (t))+ \cP_n^0( {\bf f}^n (t)) \in \cH_n, 
	\]
	where $\cP_n^j\colon \cH_n \to \cH_n^j$ is the orthogonal projection in $\cH_n$ onto $\cH_n^j$, $j\in \{\sym,+,-,0\}$. Define $f_j \colon \cA \to \C$ as the function identified with the sequence of functions $ {\bf f}_j = ({\bf f}_j^k)_{k \geq 0}$ given by
	\begin{align*}
		{\bf f}_j^k (t) := P_k^j ( {\bf f}^k (t)), \qquad j\in \{\sym,+,-,0\}, 
	\end{align*}
	for a.e. $t \in I_k$. 
	Then $f_j \in L^2(\cA)$ for all $j\in \{\sym,+,-,0\}$ and 
	\[
	f = f_{\sym} + f_+ + f_- + f_0.
	\]  
Since ${\bf f}_j^k (t) \in \cH^j_k$ for a.e. $t\in I_k$, we conclude that $f_{\sym} \in \cF_{\sym}$, $f_0 \in \cF_n^0$, $f_+ \in \cF_n$ and $f_- \in \cF_{n+1}$.
 \end{proof}

Our next aim is to write down explicit formulas for projections onto the subspaces in the decomposition \eqref{eq:basicedecomp}. First, for any $\tilde f  \in L^2(I_n)$ and ${\bf a} \in \cH_n$, we set $\tilde{\bf f} :=\tilde f \otimes {\bf a}$.
 Recalling that every function $f\colon \cA\to \C$ can be identified via \eqref{eq:U} with the sequence of vector-valued functions ${\bf f}=({\bf f}^n)_{n\ge 0}$, we denote
\be
	\cF^n_{{\bf a}} := \{f\in L^2(\cA)|\, {\bf f}^n = f^n\otimes {\bf a},\ f^n\in L^2(I_n);\, {\bf f}^k\equiv 0,\ k\neq n\}.
\ee
Note that the orthogonal projection $P_{\bf a}^n$ of $L^2(\cA)$ onto $\cF_{\bf a}^n$ is given by 
\begin{align} \label{eq:P_an}
	(U(P_{\bf a}^n f))(t) := \begin{cases} 0, & t\notin I_n \\
	\frac{1}{\|{\bf a}\|^2}({\bf f}^n(t), {\bf a})_{\cH_n}{\bf a}, & t\in I_n
	\end{cases},
\end{align}
where $U$ is the isometric isomorphism \eqref{eq:U}.

Combining the form of $P_{\bf a}^n$ with the decomposition \eqref{eq:decompfinite} and \eqref{eq:Hsym1n}, \eqref{eq:nHn}, we easily obtain the following result. 

\begin{lemma}\label{lem:Psym}
Let ${\bf 1}^n \in \cH_n$ and ${\bf a}_n^{i,j}\in \cH_n$, $n\ge 0$ be given by \eqref{eq:1n} and \eqref{eq:a_nij}. Then the orthogonal projections in the decomposition \eqref{eq:basicedecomp} are given by 
\begin{align}
		& P_{\sym} =  \sum_{n \geq 0} P_{{\bf 1}^n}^n, \label{eq:Psym1}\\
		& P_n^0 = \sum_{\substack{1\le i < s_n\\ 1\le j<s_{n+1}}} P_{{\bf a}_n^{i,j}}^n, \qquad n\ge 1,\label{eq:Pn0} \\
		& P_n = \sum_{j=1}^{s_{n}-1} P_{{\bf a}^{s_{n-1},j}_{n-1}}^{n-1} + \sum_{i=1}^{s_{n}-1} P_{{\bf a}_n^{i,s_{n+1}}}^n,\qquad n\ge 1. \label{eq:Pn} 
\end{align}	
\end{lemma}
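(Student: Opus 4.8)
The plan is to carry out the entire verification on the ``fiber'' side of the isometric isomorphism $U$ from \eqref{eq:U}. Since $U$ is unitary, each orthogonal projection onto a subspace of $L^2(\cA)$ is intertwined with the orthogonal projection onto the image subspace in $\bigoplus_{n\ge 0}L^2(I_n;\cH_n)$, so it suffices to identify these image subspaces and the projections onto them. Reading off the defining descriptions, $U$ maps $\cF_{\sym}$, $\cF_n^0$ and $\cF_n$ respectively to $\bigoplus_{n\ge 0}L^2(I_n;\cH_n^{\sym})$, to $L^2(I_n;\cH_n^0)$ (and $0$ on the other fibers), and to $L^2(I_{n-1};\cH_{n-1}^-)\oplus L^2(I_n;\cH_n^+)$ (and $0$ on the other fibers); this is immediate from \eqref{eq:Fsym2}, \eqref{eq:cFn0} and the fiberwise characterization of $\cF_n$ recorded above. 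Thus each target is of the form $\bigoplus_n L^2(I_n;V_n)$ with $V_n\subseteq \cH_n$ a finite-dimensional subspace.

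The key point is that the orthogonal projection onto such a fiber subspace is simply the fiberwise orthogonal projection $\cH_n\to V_n$ applied for a.e.\ $t\in I_n$: this is a bounded operator on each $L^2(I_n;\cH_n)$ that coincides with its own adjoint and its own square, hence is the orthogonal projection onto $L^2(I_n;V_n)$ (the possibly infinite outer sum over $n$ converges strongly because the summands act on mutually orthogonal fibers). Now $\{{\bf a}_n^{i,j}\}_{i,j}$ from \eqref{eq:a_nij} is an orthogonal basis of $\cH_n$, being a tensor product of the orthogonal bases of $\C^{s_n}$ and $\C^{s_{n+1}}$, with common norm $\|{\bf a}_n^{i,j}\|^2=s_ns_{n+1}$ by \eqref{eq:norm_a}; and by \eqref{eq:Hsym1n} and \eqref{eq:nHn} each $V_n$ occurring above is precisely the span of a subset $B_n$ of this basis. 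Therefore the fiberwise orthogonal projection onto $V_n$ equals $\sum_{{\bf a}\in B_n}\|{\bf a}\|^{-2}(\,\cdot\,,{\bf a})_{\cH_n}{\bf a}$.

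Comparing this with the explicit formula \eqref{eq:P_an} shows that, on the $n$-th fiber, the projection onto $V_n$ is exactly $\sum_{{\bf a}\in B_n}P_{\bf a}^n$. It then remains only to insert the correct index sets: $\cH_n^{\sym}=\Span\{{\bf 1}^n\}$ yields \eqref{eq:Psym1}; $\cH_n^0=\Span\{{\bf a}_n^{i,j}:1\le i<s_n,\ 1\le j<s_{n+1}\}$ yields \eqref{eq:Pn0}; and for $\cF_n$, combining $\cH_{n-1}^-=\Span\{{\bf a}_{n-1}^{s_{n-1},j}:1\le j<s_n\}$ on fiber $n-1$ with $\cH_n^+=\Span\{{\bf a}_n^{i,s_{n+1}}:1\le i<s_n\}$ on fiber $n$ yields \eqref{eq:Pn}. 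There is no substantial obstacle here beyond bookkeeping; the only two points deserving a word of care are the identification of the fiberwise projection with the genuine $L^2$-projection, and the degenerate cases where some $s_n=1$, in which the spaces $\cH_n^+,\cH_n^0,\cH_{n-1}^0,\cH_{n-1}^-$ collapse to $\{0\}$ and the corresponding sums in \eqref{eq:Pn0} and \eqref{eq:Pn} become empty, consistently with a vanishing projection.
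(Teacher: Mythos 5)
Your proof is correct and takes essentially the same approach as the paper, which disposes of this lemma in one sentence by ``combining the form of $P_{\bf a}^n$'' in \eqref{eq:P_an} with the decomposition \eqref{eq:decompfinite} and the spanning relations \eqref{eq:Hsym1n}, \eqref{eq:nHn}. You have simply made explicit the details the paper leaves to the reader: the identification of the image subspaces under $U$, the fact that the fiberwise projection is the genuine $L^2$-projection (with strong convergence of the sum over $n$), and the degenerate cases $s_n=1$.
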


\section{Reduction of the quantum graph operator}\label{sec:III}

In this section, we show that each of the spaces in the above decomposition \eqref{eq:basicedecomp} is reducing for the quantum graph operator with Kirchhoff conditions and also obtain a description of the corresponding restrictions. 

\subsection{Kirchhoff's Laplacian}\label{ss:II.02}

Let us briefly recall the definition of the Laplacian on a metric graph (for details we refer to \cite{bk13,EKMN,kn19}). Let $L^2(\cA)$ be the corresponding Hilbert space and the subspace of compactly supported $L^2$-functions will be denoted by $L^2_c(\cA)$. Moreover, denote by $H^2(\cA\setminus\cV)$ the subspace of $L^2(\cA)$ consisting of  edgewise $H^2$-functions, that is,  $f\in H^2(\cA\setminus\cV)$ if $f\in H^2(e)$ for every $e\in \cE$, where  $H^2(e)$ is the usual Sobolev space. The Kirchhoff (or Kirchhoff--Neumann) boundary conditions at every vertex $v\in\cV$ are then given by
\be\label{eq:kirchhoff}
\begin{cases} f\ \text{is continuous at}\ v \\[1mm] \sum_{e\in \cE_v}f_e'(v) =0 \end{cases},
\ee 
where 
\begin{align}\label{eq:tr_fe}
f_e(v) & :=  \lim_{x_e\to v} f(x_e), & f_e'(v) & := \lim_{x_e\to v} \frac{f(x_e) - f_e(v)}{|x_e - v|},
\end{align}
are well defined for all $f\in H^2(\cA\setminus\cV)$ and every vertex $v\in \cV$. 
Imposing these boundary conditions  and restricting to compactly supported functions we get the pre-minimal operator
$\bH_0$ acting edgewise as the (negative) second derivative $f_e \mapsto -\frac{d^2}{dx_e^2} f_e$, $e \in \cE$ on the domain
\be\label{eq:H0}
	 \dom(\bH_{0}) = \{f\in H^2(\cA\setminus\cV) \cap L^2_{c}(\cG)|\, f\ \text{satisfies}\ \eqref{eq:kirchhoff},\ v\in\cV\}.
\ee
The operator $\bH_0$ is symmetric and  its closure $\bH=\overline{\bH}_0$ is called {\em the minimal Kirchhoff Laplacian}. 

First, we need the following simple but useful fact. 

\begin{lemma}\label{lem:domH}
Let $f\in L^2(\cA)$ and ${\bf f}= Uf$ be given by \eqref{eq:U}. Then $f\in\dom(\bH_0)$ if and only if ${\bf f} = ({\bf f}^n)_{n\ge 0}$ satisfies the following conditions:
\begin{enumerate}[label=(\roman*), ref=(\roman*), leftmargin=*, widest=iiii]
\item[{\rm (i)}] ${\bf f}^n\equiv 0$ for all sufficiently large $n$,
\item[{\rm (ii)}] ${\bf f}_{i,j}^n \in H^2(I_n)$ for all $n\ge 0$,
\item[{\rm (iii)}] for all $j\in \{1,\dots,s_1\}$
\begin{align*}
{\bf f}^0_{1,j}(0+) & = {\bf f}^0_{1,1}(0+), & \sum_{j=1}^{s_1} ({\bf f}^0_{1,j})'(0+) & =0,
\end{align*}
\item[{\rm (iv)}] for all $n\ge 1$,
\[ 
\begin{array}{c}
{\bf f}^n_{i,j}(t_n+) = {\bf f}^{n-1}_{k,i}(t_{n}-) \\[2mm]
\sum_{j=1}^{s_{n+1}}({\bf f}^n_{i,j})'(t_n+)  = \sum_{k=1}^{s_{n-1}}({\bf f}^{n-1}_{k,i})'(t_n-)
\end{array},\qquad i\in \{1,\dots,s_n\}.
\]
\end{enumerate}
\end{lemma}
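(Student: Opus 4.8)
The plan is to verify the equivalence by directly translating the three conditions defining $\dom(\bH_0)$ in \eqref{eq:H0} — edgewise $H^2$-regularity, compact support, and the Kirchhoff conditions \eqref{eq:kirchhoff} at every vertex — into statements about the scalar components ${\bf f}^n_{i,j}$ of ${\bf f}=Uf$. Since $U$ acts edgewise, simply relabelling the restriction $f_e$ of $f$ to the edge $e\in\cE^+_n$ joining the $i$-th vertex of $S_n$ to the $j$-th vertex of $S_{n+1}$ as the function ${\bf f}^n_{i,j}\in L^2(I_n)$ via the parametrization $x_{ij}(t)$ by distance $t$ to the root, the whole statement is a matter of bookkeeping; the only delicate point is the orientation of the edges at each vertex.

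First I would dispose of the two \emph{local} conditions, which do not involve the vertices. Because $f_e$ and ${\bf f}^n_{i,j}$ are the same function up to relabelling, edgewise $H^2$-regularity is equivalent to ${\bf f}^n_{i,j}\in H^2(I_n)$ for all $n,i,j$, which is (ii). For compact support I would note that each $\cE^+_n$ consists of the finitely many $s_n s_{n+1}$ edges, so $f$ is supported on finitely many edges if and only if ${\bf f}^n\equiv 0$ for all but finitely many $n$, which is (i).

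The core of the argument is matching the Kirchhoff conditions, and here I would treat the root and the remaining vertices separately. At the root $o$ (the only vertex of $S_0$) every incident edge lies in $\cE^+_0$ and carries $o$ at its left endpoint $t=0$, so the boundary values $f_e(o)$ and outward derivatives $f_e'(o)$ are precisely the right limits ${\bf f}^0_{1,j}(0+)$ and $({\bf f}^0_{1,j})'(0+)$; continuity and $\sum_{e}f'_e(o)=0$ then read off as (iii). At the $i$-th vertex $v\in S_n$ with $n\ge 1$, the incident edges split into the $s_{n+1}$ outgoing edges $\cE^+_v$, with $v$ at the left endpoint $t=t_n$ of $I_n$ and coordinates ${\bf f}^n_{i,j}$, and the $s_{n-1}$ incoming edges $\cE^-_v$, with $v$ at the right endpoint $t=t_n$ of $I_{n-1}$ and coordinates ${\bf f}^{n-1}_{k,i}$. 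Computing the signed derivatives directly from \eqref{eq:tr_fe}, the outward direction corresponds to increasing $t$ on the outgoing edges but to \emph{decreasing} $t$ on the incoming ones, yielding $f_e'(v)=({\bf f}^n_{i,j})'(t_n+)$ and $f_e'(v)=-({\bf f}^{n-1}_{k,i})'(t_n-)$ respectively. Continuity of $f$ at $v$ then asserts that all of ${\bf f}^n_{i,j}(t_n+)$, $1\le j\le s_{n+1}$, and ${\bf f}^{n-1}_{k,i}(t_n-)$, $1\le k\le s_{n-1}$, coincide, while $\sum_{e\in\cE_v}f'_e(v)=0$ becomes the balance of the two derivative sums — exactly (iv).

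The one step demanding genuine care is this sign bookkeeping for the normal derivatives: the minus sign attached to the incoming edges is precisely what turns the single homogeneous Kirchhoff sum into the two-sided identity in (iv), so I would fix the orientation of $x_{ij}(t)$ explicitly before passing to one-sided limits. Everything else follows routinely from the edgewise action of $U$ and the isometry \eqref{eq:prodvec}.
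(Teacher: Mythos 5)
Your proposal is correct and follows exactly the route of the paper's own proof, which simply remarks that (i) encodes compact support, (ii) edgewise $H^2$-regularity, and (iii)--(iv) the continuity and Kirchhoff conditions; your sign bookkeeping for the outward derivatives (the minus sign on incoming edges turning the homogeneous Kirchhoff sum into the two-sided identity in (iv)) is precisely the computation the paper leaves implicit as ``straightforward.''
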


\begin{proof}
The proof is straightforward. We only need to mention that (i) is equivalent to the fact that $f$ is compactly supported; (ii) means that $f$ belongs to the Sobolev space $H^2$ on each edge $e\in\cE$; (iii) and (iv) are continuity and Kirchhoff's conditions at the vertices.
\end{proof}

\subsection{The subspace $\cF_{\sym}$}\label{ss:3.1}

Set $\cI_\cL = [0,\cL)$, and define the length $\cL$ and the weight function $\mu\colon \cI_\cL \to \R_{\ge 0}$ by
\begin{align}\label{eq:muL}
		 \mu(t) & = \sum_{n \geq 0} s_n s_{n+1} \id_{I_n}(t),\quad t\in [0,\cL); & \cL & = \sum_{n\ge 0}\ell_n .
\end{align}Consider the (pre-minimal) Sturm--Liouville operator $\rH_0$ defined in $L^2(\cI_\cL; \mu)$ by the differential expression
\be\label{eq:tauA}
	\tau = -\frac{1}{\mu(t)}\frac{d}{d t}\mu(t)\frac{d}{d t},
\ee
on the domain
\be
\dom(\rH_0):= \big\{f\in L^2_c(\cI_\cL; \mu)|\, f,\, \mu f' \in AC_{\loc}(\cI_\cL),\ \tau f\in L^2(\cI_\cL; \mu);\ f'(0)=0\big\}.
\ee
More concretely, $\rH_0$ acts as a negative second derivative and its domain $\dom(\rH_0)$ consists of functions $f \in L^2(\cI_\cL; \mu)$ having  compact support in $\cI_\cL$, belonging to $H^2$ on every interval $I_n$ and at each point $t_n$ satisfying the boundary conditions
\be \label{eq:bcsym}
\begin{cases} f\ \text{is continuous at}\ t_n,
	\\[1mm] s_{n-1} f'(t_n-) = s_{n+1} f'(t_n+).
\end{cases}
\ee
Here we set $s_{-1} := 0$ in the case $n=0$ for notational simplicity and the corresponding condition \eqref{eq:bcsym} reads as the Neumann boundary condition at $t=0$.

\begin{lemma}\label{lem:FsymH}
	The subspace $\cF_{\sym}$ reduces the operator $\bH_0$. Moreover, its restriction $\bH_0\upharpoonright{\cF_{\sym}}$ onto $\cF_{\sym}$ is unitarily equivalent to the operator $\rH_0$.
\end{lemma}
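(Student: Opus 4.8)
The plan is to treat both assertions together by introducing the obvious candidate for the intertwining unitary and then reading off the reducing property from the structure of $\dom(\bH_0)$ described in Lemma \ref{lem:domH}. Define $V\colon \cF_{\sym}\to L^2(\cI_\cL;\mu)$ as follows: by \eqref{eq:Fsym2} and \eqref{eq:Hsym1n}, every $f\in\cF_{\sym}$ satisfies ${\bf f}^n(t)=\phi(t)\,{\bf 1}^n$ on $I_n$ for a unique scalar $\phi\colon\cI_\cL\to\C$ with $\phi|_{I_n}\in L^2(I_n)$, and I set $Vf:=\phi$. Since $\|{\bf 1}^n\|^2=s_n s_{n+1}$, the inner-product formula \eqref{eq:prodvec} gives $\|f\|_{L^2(\cA)}^2=\sum_{n\ge0}\int_{I_n}|\phi(t)|^2 s_n s_{n+1}\,dt=\|\phi\|_{L^2(\cI_\cL;\mu)}^2$ by the definition \eqref{eq:muL} of $\mu$, so $V$ is isometric, and it is manifestly surjective.

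Next I would verify that $\cF_{\sym}$ reduces $\bH_0$, i.e.\ that $P_{\sym}\dom(\bH_0)\subseteq\dom(\bH_0)$ and $\bH_0 P_{\sym}=P_{\sym}\bH_0$ on $\dom(\bH_0)$. Given $f\in\dom(\bH_0)$, formulas \eqref{eq:P_an} and \eqref{eq:Psym1} show that $P_{\sym}f$ has fiber values $\psi_n(t)\,{\bf 1}^n$ with $\psi_n(t)=(s_n s_{n+1})^{-1}\sum_{i,j}{\bf f}^n_{i,j}(t)$, so conditions (i) and (ii) of Lemma \ref{lem:domH} are trivially inherited by this fiberwise averaging. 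The decisive point---and the place where the antitree structure enters---is that averaging the vertex conditions over a whole sphere collapses them to \eqref{eq:bcsym}. Indeed, the continuity relation in (iv) forces ${\bf f}^n_{i,j}(t_n+)$ to be independent of $j$ and ${\bf f}^{n-1}_{k,i}(t_n-)$ independent of $k$ (the common vertex value), and summing over the fibers gives $\psi_n(t_n+)=\psi_{n-1}(t_n-)$; summing the Kirchhoff relation $\sum_j({\bf f}^n_{i,j})'(t_n+)=\sum_k({\bf f}^{n-1}_{k,i})'(t_n-)$ over $i$ and dividing by $s_n$ yields $s_{n+1}\psi_n'(t_n+)=s_{n-1}\psi_{n-1}'(t_n-)$, with the root handled by the convention $s_{-1}:=0$ in (iii). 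Hence $P_{\sym}f\in\dom(\bH_0)$, and since the edgewise second derivative commutes with the fixed (in $t$) fiber projection $\cP_n^{\sym}$, we obtain $\bH_0 P_{\sym}f=P_{\sym}\bH_0 f$.

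Finally, the same bookkeeping identifies the restriction with $\rH_0$. For $f\in\cF_{\sym}\cap\dom(\bH_0)$ and $\phi=Vf$, the computation above shows that conditions (i)--(iv) of Lemma \ref{lem:domH} translate exactly into: $\phi$ has compact support in $\cI_\cL$, $\phi\in H^2(I_n)$ for each $n$, $\phi$ is continuous at each $t_n$, and $s_{n-1}\phi'(t_n-)=s_{n+1}\phi'(t_n+)$; that is, $V\big(\cF_{\sym}\cap\dom(\bH_0)\big)=\dom(\rH_0)$. On the interior of each $I_n$ the weight $\mu$ equals the constant $s_n s_{n+1}$, so $\tau\phi=-\phi''$ agrees with the edgewise action of $\bH_0$, giving $V(\bH_0\upharpoonright\cF_{\sym})V^{-1}=\rH_0$. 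I expect the main obstacle to be precisely this junction bookkeeping at the points $t_n$: one must check that the weighted transmission condition in \eqref{eq:bcsym} is exactly what the Kirchhoff condition \eqref{eq:kirchhoff} reduces to after radial averaging, keeping careful track of the orientation of incoming versus outgoing edges and of the degenerate root case $n=0$.
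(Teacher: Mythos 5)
Your proof is correct and follows essentially the same route as the paper: your unitary $V$ is exactly the paper's $U_s$ from \eqref{eq:Usym}, and your verification that $P_{\sym}$ preserves $\dom(\bH_0)$ by averaging the vertex conditions of Lemma \ref{lem:domH} over spheres (yielding precisely \eqref{eq:bcsym}, with the root covered by $s_{-1}:=0$) is the same computation carried out in the paper's proof. The only cosmetic difference is that you conclude reducibility by directly checking the commutation $\bH_0 P_{\sym}=P_{\sym}\bH_0$ on $\dom(\bH_0)$, whereas the paper deduces it from invariance of $\cF_{\sym}$ together with the symmetry of $\bH_0$.
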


\begin{proof}
First let us show that  $f_{\sym}:=P_{\sym}f \in \dom(\bH_0)$ for every $f\in \dom(\bH_0)$. In fact, we need to show that ${\bf f}_{\sym} = Uf_{\sym}$ satisfies conditions (i)--(iv) of Lemma \ref{lem:domH}. 
Clearly, by continuity of $f$ and \eqref{eq:P_an}, \eqref{eq:Psym1}, ${\bf f}_{\sym}$ satisfies (i) and (ii). Moreover, both $({\bf f}_{\sym})^n_{i,j}(t_n+)$ and  $({\bf f}_{\sym})^n_{k,m}(t_{n+1}-)$ depend only on $n\ge 0$. Since ${\bf f}$ satisfies both (iii) and (iv), we obtain that $({\bf f}_{\sym})^0_{1,j}(0+)$ does not depend on $j$ and 
\begin{align*}
({\bf f}_{\sym})^n_{i,j}(t_n+) & = \frac{1}{s_{n}s_{n+1}}\big({\bf f}^n(t_n+),{\bf 1}^n\big)_{\cH_n} \\
& = \frac{1}{s_{n-1}s_{n}}\big({\bf f}^{n-1}(t_{n}-),{\bf 1}^{n-1}\big)_{\cH_{n-1}} = ({\bf f}_{\sym})^{n-1}_{k,i}(t_{n}-)
\end{align*}
for all $i\in \{1,\dots,s_n\}$ and $n\ge 1$. Similarly,
\begin{align}
\sum_{j=1}^{s_{n+1}}({\bf f}_{\sym}')^n_{i,j}(t_n+) & = \frac{1}{s_{n}}\big(({\bf f}^n)'(t_n+),{\bf 1}^n\big)_{\cH_n} = \frac{1}{s_{n}}\sum_{i,j}({\bf f}^n_{i,j})'(t_n+) \nn\\
&=\frac{1}{s_{n}}\sum_{i=1}^{s_n} \sum_{j=1}^{s_{n+1}}({\bf f}^n_{i,j})'(t_n+) =\frac{1}{s_{n}}\sum_{i=1}^{s_n} \sum_{k=1}^{s_{n-1}}({\bf f}^{n-1}_{k,i})'(t_n-)\nn\\
&= \frac{1}{s_{n}}\big(({\bf f}^{n-1})'(t_{n}-),{\bf 1}^{n-1}\big)_{\cH_{n-1}} = \sum_{k=1}^{s_{n-1}}({\bf f}_{\sym}')^{n-1}_{k,i}(t_{n}-), \label{eq:3.05}
\end{align}
which holds for all $i\in\{1,\dots,s_n\}$, $n\ge 1$. Moreover, for $n=0$ we have
\[
({\bf f}_{\sym}')^0_{1,j}(0+) = \frac{1}{s_1}\sum_{m=1}^{s_1} ({\bf f}^0_{1,m})'(0+) = 0
\]
for all $j\in \{1,\dots,s_1\}$. 
Hence $f_{\sym} = P_{\sym}f \in \dom(\bH_0)$ for all $f \in \dom(\bH_0)$. Noting that $\bH_0$ is symmetric and $\cF_{\sym}$ is clearly invariant for $\bH_0$ we conclude that $\cF_{\sym}$ is reducing for $\bH_0$.

To prove the last claim, observe that the subspace $\cF_{\sym}$ is isometrically isomorphic to the Hilbert space $L^2(\cI_\cL;\mu)$. Indeed, for every $f\in \cF_{\sym}$, set 
\be\label{eq:tilde_f}
\tilde{f}(t):= \frac{1}{s_{n}s_{n+1}}\sum_{e\in \cE_n^+} f(x_{e}(t)) = \frac{1}{\|{\bf 1}^n\|^2}({\bf f}^n(t),{\bf 1}^n)_{\cH_n},\qquad t\in I_n;\qquad n\ge 0,
\ee
where $x_e(t)$ is the unique point on $e$ satisfying $|x_e(t)| = t$. 
Consider the map
\be\label{eq:Usym}
\begin{array}{cccc}
 U_s\colon & \cF_{\sym} &\to & L^2\big(\cI_\cL;\mu\big) \\
  & f & \mapsto & \tilde{f}
 \end{array}.
\ee
Clearly, for every $f\in \cF_{\sym}$, ${\bf f}^n(t) = \tilde{f}(t)\otimes {\bf 1}^n$ for a.e. $t\in I_n$ and hence
\[
\|\tilde{f}\|^2_{L^2(\cI_\cL;\mu)} = \sum_{n\ge 0} s_ns_{n+1}\|\tilde{f}\|^2_{L^2(I_n)} = \sum_{n\ge 0}\|{\bf f}^n\|^2_{L^2(I_n;\cH_n)} = \|{\bf f}\|^2 = \|f\|^2_{L^2(\cA)}.
\]
It turns out that
\be\label{eq:rH0equivFsym}
\rH_0 = U_s(\bH_0\upharpoonright \cF_{\sym})U_s^{-1}.
\ee
Indeed, $\bH_0$ acts as the negative second derivative on every edge $e\in\cE$ and hence for every $f\in\cF_{\sym}$ we get
\[
(U_s(\bH_0 f))(t) = - \tilde{f}''(t),\qquad t\in I_n,  
\]
for all $n\ge 0$. Therefore, it remains to show that $U_s(\cF_{\sym}\cap \dom(\bH_0)) = \dom(\rH_0)$. In fact, we only need to show that every $\tilde{f} = U_sf$ with $f\in \cF_{\sym}$ satisfies \eqref{eq:bcsym} if and only if $f\in \dom(\bH_0)$. Indeed, by \eqref{eq:tilde_f} and continuity of $f$, $\tilde{f}(t_n+) = \tilde{f}(t_{n}-)$ for all $n\ge 1$ if $f\in \cF_{\sym}\cap \dom(\bH_0)$. Moreover,  similar to \eqref{eq:3.05} one checks that 
\[
s_{n+1}\tilde{f}'(t_n+) = s_{n-1}\tilde{f}'(t_n-),\qquad n\ge0,
\]
exactly when $f\in \cF_{\sym}\cap \dom(\bH_0)$. This finishes the proof of Lemma \ref{lem:FsymH}.
\end{proof}

\subsection{Restriction to $\cF_n^0$}\label{ss:3.2}

Our next aim is to show that each $\cF_n^0$, $n \geq 1$, is a reducing subspace for $\bH_0$ and its restriction is unitarily equivalent to $(s_n-1) (s_{n+1}-1)$ copies of 
$\rh_n$, the second derivative with the Dirichlet boundary conditions on $L^2(I_n)$,
\be\label{eq:rh_n}
	\rh_n:= -\frac{d^2}{dt^2},\quad \dom(\rh_n) = \{f\in H^2(I_n)|\, f(t_{n}+)=f(t_{n+1}-)=0\}.
	\ee
By Lemma \ref{lem:Psym},  this will be a consequence of the following lemma.

\begin{lemma}\label{lem:F0restr}
	Let $n\ge 1$ be such that $s_n>1$ and $s_{n+1}>1$. Then each of the subspaces $\cF^n_{{\bf a}}$, where ${\bf a} = {\bf a}_n^{i,j}$ with $1\le i< s_n$ and $1\le j <s_{n+1}$, is reducing for the operator $\bH_0$. The restricted operator $\bH_0\upharpoonright \cF^n_{{\bf a}}$ is unitarily equivalent to the operator $\rh_n$ defined by \eqref{eq:rh_n}. 
\end{lemma}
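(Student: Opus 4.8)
The goal is to show that for each basis vector $\mathbf{a} = \mathbf{a}_n^{i,j}$ spanning $\cH_n^0$ (with $1 \le i < s_n$, $1 \le j < s_{n+1}$), the one-dimensional-fiber subspace $\cF^n_{\mathbf{a}}$ reduces $\bH_0$ and that the restriction is unitarily equivalent to the Dirichlet operator $\rh_n$ on $L^2(I_n)$. The overall strategy mirrors the proof of Lemma \ref{lem:FsymH}: verify that the projection $P_{\mathbf{a}}^n$ maps $\dom(\bH_0)$ into itself (reducing), and then exhibit an explicit unitary identifying the restriction with $\rh_n$.

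First I would establish the reducing property. Given $f \in \dom(\bH_0)$, set $g := P_{\mathbf{a}}^n f$; by the formula \eqref{eq:P_an}, $\mathbf{g}^k \equiv 0$ for $k \neq n$ and $\mathbf{g}^n(t) = g^n(t) \otimes \mathbf{a}$ with $g^n(t) = \|\mathbf{a}\|^{-2}(\mathbf{f}^n(t), \mathbf{a})_{\cH_n}$. Conditions (i) and (ii) of Lemma \ref{lem:domH} are inherited immediately from $f$. The content is in the vertex conditions (iii)--(iv). The key structural input is that $\mathbf{a} = \mathbf{a}^i_{s_n} \otimes \mathbf{a}^j_{s_{n+1}} \in \cH_n^0$, so by the definition of $\cH_n^0$ we have $\sum_{j'} a_{i'j'} = 0$ for each $i'$ and $\sum_{i'} a_{i'j'} = 0$ for each $j'$. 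Because $g$ is supported only on edges in $\cE_n^+$, at each vertex $v \in S_n$ the ``incoming'' functions (from $\cE_{n-1}^+$) vanish identically, and similarly at each $v \in S_{n+1}$ the ``outgoing'' functions (into $\cE_{n+1}^+$) vanish; so the Kirchhoff derivative-matching condition at $t_n$ and $t_{n+1}$ reduces to the statement that the relevant one-sided sums of derivatives of $\mathbf{g}^n$ vanish. These sums are precisely $\sum_{j'}(g^n)'\, a_{i'j'} = (g^n)' \sum_{j'} a_{i'j'} = 0$ and the analogous sum over $i'$, which hold by the two orthogonality relations defining $\cH_n^0$. The same two relations force the continuity (trace) conditions at $t_n$ and $t_{n+1}$ to hold as well, since the traces of the neighboring fibers are zero. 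Hence $g \in \dom(\bH_0)$, and since $\cF^n_{\mathbf{a}}$ is clearly $\bH_0$-invariant and $\bH_0$ is symmetric, $\cF^n_{\mathbf{a}}$ is reducing.

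For the unitary equivalence, I would define $V \colon \cF^n_{\mathbf{a}} \to L^2(I_n)$ by $f \mapsto g^n$, i.e. extracting the scalar factor of $\mathbf{f}^n = g^n \otimes \mathbf{a}$; by \eqref{eq:norm_a} one has $\|f\|_{L^2(\cA)}^2 = \|\mathbf{a}\|^2 \|g^n\|_{L^2(I_n)}^2 = s_n s_{n+1}\|g^n\|^2$, so a rescaling $V f := \|\mathbf{a}\|\, g^n$ makes $V$ an isometric isomorphism onto $L^2(I_n)$. Since $\bH_0$ acts edgewise as $-d^2/dx_e^2$, it transforms under $V$ into $-d^2/dt^2$ on $I_n$; the vertex conditions verified above become exactly the Dirichlet conditions $g^n(t_n+) = g^n(t_{n+1}-) = 0$ (the vanishing traces of the neighboring fibers translate into vanishing of $g^n$ at the two endpoints), which is the domain of $\rh_n$ in \eqref{eq:rh_n}. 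Thus $V(\bH_0 \upharpoonright \cF^n_{\mathbf{a}})V^{-1} = \rh_n$.

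The main obstacle is the bookkeeping in the vertex conditions: one must track carefully, for each $v \in S_n$ and $v \in S_{n+1}$, which edges of $\cE_v$ carry nonzero functions and correctly apply conditions (iii)--(iv) of Lemma \ref{lem:domH}. The essential point — and what makes the Dirichlet condition emerge rather than a Kirchhoff-type condition as in the symmetric case — is that the double orthogonality $\sum_{j'} a_{i'j'} = \sum_{i'} a_{i'j'} = 0$ defining $\cH_n^0$ forces both the traces and the derivative sums of the neighboring spheres to vanish, decoupling $I_n$ completely and leaving only the purely interior Dirichlet problem. Once this is seen, the remaining computations are routine and parallel to those in Lemma \ref{lem:FsymH}.
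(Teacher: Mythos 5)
Your overall strategy is exactly the paper's: show that $P^n_{{\bf a}}$ maps $\dom(\bH_0)$ into itself, combine this with invariance and symmetry of $\bH_0$ to get the reducing property, and then conjugate by the obvious unitary $h\mapsto h\cdot{\bf a}/\|{\bf a}\|$ between $L^2(I_n)$ and $\cF^n_{\bf a}$. Your treatment of the Kirchhoff (derivative) conditions is correct and complete: writing $U(P^n_{\bf a}f) = g\otimes {\bf a}$ with $g(t)=\|{\bf a}\|^{-2}({\bf f}^n(t),{\bf a})_{\cH_n}$, the relevant quantity at a vertex $i'\in S_n$ is the \emph{sum} $\sum_{j'}g'(t_n+)\,a_{i'j'}=g'(t_n+)\sum_{j'}a_{i'j'}=0$, and likewise at $S_{n+1}$ via the column sums. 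The unitary-equivalence step is also fine.

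However, your justification of the continuity (trace) conditions has a genuine gap. You assert that the double orthogonality $\sum_{j'}a_{i'j'}=\sum_{i'}a_{i'j'}=0$ ``forces the traces to vanish.'' It cannot: the traces of the projection are $g(t_n+)\,a_{i'j'}$, and every entry $a_{i'j'}=\omega_n^{ii'}\omega_{n+1}^{jj'}$ is a root of unity, hence nonzero, so trace-vanishing is equivalent to $g(t_n+)=0$ --- a statement about $f$, not about ${\bf a}$. The orthogonality relations applied to ${\bf g}^n$ only give vanishing of the \emph{sums} $\sum_{j'}{\bf g}^n_{i'j'}(t_n+)$, which is what the derivative condition needs but is strictly weaker than the entrywise vanishing that continuity needs. (Concretely, if ${\bf f}^n=c\otimes{\bf a}$ with $c(t_n+)\neq 0$, then $P^n_{\bf a}f=f$ has nonzero traces; such an $f$ is of course not in $\dom(\bH_0)$, which shows that the continuity of $f$ itself must enter the argument somewhere.) The missing step --- and the crux of the paper's proof --- is to use continuity of the original $f$ at the vertices of $S_n$: it makes ${\bf f}^n_{k,m}(t_n+)$ independent of $m$, i.e. ${\bf f}^n(t_n+)={\bf c}\otimes{\bf 1}_{s_{n+1}}$ for some ${\bf c}\in\C^{s_n}$, whence
\[
g(t_n+)=\frac{1}{\|{\bf a}\|^{2}}\big({\bf f}^n(t_n+),{\bf a}\big)_{\cH_n}
=\frac{1}{\|{\bf a}\|^{2}}\sum_{k=1}^{s_n}c_k\,\omega_n^{-ik}\sum_{m=1}^{s_{n+1}}\omega_{n+1}^{-jm}=0,
\]
since the character sum over $m$ vanishes precisely because $1\le j<s_{n+1}$. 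Symmetrically at $t_{n+1}$, continuity at the vertices of $S_{n+1}$ together with $\sum_{k=1}^{s_n}\omega_n^{-ik}=0$ (here $1\le i<s_n$ is used) gives $g(t_{n+1}-)=0$. With these two computations inserted in place of the sentence ``the same two relations force the continuity conditions,'' your proof closes and coincides with the paper's.
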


\begin{proof}
Clearly, $\cF^n_{{\bf a}}$ is invariant for $\bH_0$. Since $\bH_0$ is symmetric, we only have to prove that $\ti{f}:=P^n_{{\bf a}} f \in \dom(\bH_0)$ whenever $f \in \dom(\bH_0)$. In fact, we need to show that $\ti{\bf f}:=U(P^n_{{\bf a}} f)$ given by \eqref{eq:P_an} satisfies conditions (i)--(iv) of Lemma \ref{lem:domH}. Conditions (i) and (ii) are obviously satisfied since $f\in \dom(\bH_0)$ and by the definition of $U(P^n_{{\bf a}}f)$. Since $\ti{\bf f}^m = 0$ for all $m\neq n$ and $n\ge 1$, (iii) clearly holds  and, moreover, we need to verify (iv) only at $t_n$ and $t_{n+1}$. 

Let us start with continuity. Suppose ${\bf a} = {\bf a}_n^{i,j}$ for some $1\le i< s_n$ and $1\le j <s_{n+1}$. First observe that
\[
\ti{\bf f}^n_{k,m}(t_n+) = \ti{\bf f}^n_{k,m}(t_{n+1}-) = 0 
\]
for all $k\in \{1,\dots,s_n\}$ and $m\in \{1,\dots,s_{n+1}\}$. Indeed,
\[
\lim_{t\to t_n+} ({\bf f}^n(t),{\bf a})_{\cH_n} = ({\bf f}^n(t_n+),{\bf a})_{\cH_n} = \sum_{k=1}^{s_n}  {\bf f}^n_{k,1}(t_n+) \omega_n^{-ik} \sum_{m=1}^{s_{n+1}}\omega_{n+1}^{-jm} = 0.
\]
Here we employed the continuity of $f$, ${\bf f}^n_{k,j}(t_n+) = {\bf f}^n_{k,1}(t_n+)$ for all $j\in \{1,\dots,s_{n+1}\}$, together with \eqref{eq:a_nij}. 
This shows that $\ti{\bf f}$ satisfies the first condition in (iv).

Next observe that 
\[
\sum_{m=1}^{s_{n+1}}(\ti{\bf f}^n_{k,m})'(t_n+) = \frac{\omega_n^{ik}}{s_ns_{n+1}}(({\bf f}^n)'(t_n+),{\bf a})_{\cH_n}\sum_{m=1}^{s_{n+1}} \omega_{n+1}^{jm} = 0
\]
for all $k\in\{1,\dots,s_n\}$. Since $(\ti{\bf f}^{n-1})'=0$, $\ti{\bf f}$ satisfies (iv) at $t_n$. Similar arguments shows that (iv) holds true at $t_{n+1}$ as well. This finishes the proof of the inclusion $\ti{f} = P^n_{{\bf a}} f \in \dom(\bH_0)$.

Finally, noting that 
\be\label{eq:Una}
\begin{array}{cccc}
 U_{\bf a}^n\colon & L^2(I_n) &\to & \cF^n_{\bf a} \\[1mm]
  & f & \mapsto & f\cdot \frac{{\bf a}}{\|{\bf a}\|} 
 \end{array}
\ee
establishes an isometric isomorphism of $L^2(I_n)$ onto $\cF_{\bf a}^n$, it is straightforward to verify the last claim and we left it to the reader.
\end{proof}

\subsection{Restriction to $\cF_n$}\label{ss:3.3}

Next, we show that $ \cF_n$, $n \geq 1$ is reducing for $\bH_0$ as well and the corresponding restriction is unitarily equivalent to $s_n-1$ copies of the operator $\wt{\rh}_n$ defined by
\[
\wt{\tau}_n = -\frac{1}{\mu(t)}\frac{d}{dt}\mu(t)\frac{d}{dt},
\]
on $L^2((t_{n-1},t_{n+1});\mu)$ and equipped with Dirichlet conditions at the endpoints. Here the weight function $\mu$ is defined by \eqref{eq:muL}. The domain of $\wt{\rh}_n$ admits a very simple description since inside $I_{n-1}$ and $I_n$ the differential expression $\wt{\tau}_n$ reduces to the negative second derivative and hence $\dom(\wt{\rh}_n)$ consists of functions which are $H^2$ in $I_{n-1}$ and $I_n$, satisfy the Dirichlet conditions at $t_{n-1}$ and $t_{n+1}$ and also the following coupling conditions at $t_n$:
\be \label{eq:bcrhn}
\begin{cases} 
f(t_n+)=f(t_n-) \\ s_{n-1} f'(t_n-) = s_{n+1} f'(t_n+)
\end{cases}.
\ee

Recall that $\cF_n = \ran (P_n)$, where the projection $P_n$ is given by \eqref{eq:Pn}. By \eqref{eq:a_nij} and \eqref{eq:1n}, 
\[
{\bf a}^{s_{n-1},j}_{n-1} = {\bf 1}_{s_{n-1}}\otimes {\bf a}_{s_n}^j,\qquad  {\bf a}_n^{j,s_{n+1}} = {\bf a}_{s_{n}}^j\otimes {\bf 1}_{s_{n+1}},
\]
and hence 
\be\label{eq:Pn2}
P_n = \sum_{j=1}^{s_n-1} \big(P^{n-1}_{{\bf 1}_{s_{n-1}}\otimes {\bf a}_{s_n}^j} + P^n_{{\bf a}_{s_{n}}^j\otimes {\bf 1}_{s_{n+1}}}\big).
\ee
Denoting the summands  in \eqref{eq:Pn2} by $\wt{P}_n^j$, $j\in \{1,\dots,s_n-1\}$, we set
\be
\wt{\cF}_n^j := \ran (\wt{P}_n^j) = \cF^{n-1}_{{\bf 1}_{s_{n-1}}\otimes {\bf a}_{s_n}^j}\oplus \cF^n_{{\bf a}_{s_{n}}^j\otimes {\bf 1}_{s_{n+1}}}.
\ee
Since $\cF_n	= \bigoplus_{j=1}^{s_n-1} \wt{\cF}_n^j$, these claims will follow from the following lemma:

\begin{lemma}\label{lem:wtFj}
Every subspace $\wt{\cF}_n^j$ with $n \geq 1$ and $j\in \{1,\dots,s_n-1\}$, is reducing for the operator $\bH_0$. Moreover, its restriction onto $\wt{\cF}_n^j$ is unitarily equivalent to $ \wt{\rh}_n$.
\end{lemma}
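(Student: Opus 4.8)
The plan is to follow the template already used for Lemmas \ref{lem:FsymH} and \ref{lem:F0restr}. Since $\wt{\cF}_n^j$ is plainly invariant under $\bH_0$ and $\bH_0$ is symmetric, proving that $\wt{\cF}_n^j$ reduces $\bH_0$ reduces to showing that $\wt P_n^j f\in\dom(\bH_0)$ whenever $f\in\dom(\bH_0)$; equivalently, I would verify that $\wt{\bf f}:=U(\wt P_n^j f)$ meets conditions (i)--(iv) of Lemma \ref{lem:domH}. Writing out \eqref{eq:P_an} for the two summands of $\wt P_n^j$ (recall \eqref{eq:Pn2}), the projected function is supported on $I_{n-1}\cup I_n$ and has the product form $\wt{\bf f}^{n-1}(t)=g(t)\,({\bf 1}_{s_{n-1}}\otimes{\bf a}^j_{s_n})$ on $I_{n-1}$ and $\wt{\bf f}^{n}(t)=h(t)\,({\bf a}^j_{s_n}\otimes{\bf 1}_{s_{n+1}})$ on $I_n$, with scalar functions $g,h$ obtained by pairing ${\bf f}^{n-1}$, ${\bf f}^n$ against the respective fixed vectors. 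Conditions (i) and (ii) are immediate, and (iii) is vacuous unless $n=1$.

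The decisive structural fact is the geometric-sum identity $\sum_{k=1}^{s_n}\omega_n^{jk}=0$, valid for $1\le j<s_n$. I expect the verification to split according to the three relevant vertices. At the outer endpoint $t_{n-1}$ (respectively the root $o$ when $n=1$, where condition (iii) for $f$ applies) and at $t_{n+1}$, continuity of $f$ forces the boundary traces ${\bf f}^{n-1}_{i,\cdot}(t_{n-1}+)$ and ${\bf f}^{n}_{\cdot,i}(t_{n+1}-)$ to depend only on the $S_{n-1}$-, resp.\ $S_{n+1}$-vertex index; pairing against ${\bf a}^j_{s_n}$ and invoking the vanishing sum then gives $g(t_{n-1}+)=h(t_{n+1}-)=0$, i.e.\ Dirichlet conditions for the projected function, while the corresponding Kirchhoff derivative sums vanish automatically (again by $\sum_k\omega_n^{jk}=0$) and impose nothing on $g,h$. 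At the interior vertex $t_n$ I would compute $g(t_n-)$ and $h(t_n+)$ from continuity of $f$ (both equal $\tfrac{1}{s_n}\sum_i v_i\,\overline{\omega_n^{ji}}$, where $v_i:={\bf f}^n_{i,m}(t_n+)={\bf f}^{n-1}_{k,i}(t_n-)$), so that the glued scalar function is continuous, and compute the two one-sided derivative sums from Kirchhoff for $f$; the normalizations $1/(s_{n-1}s_n)$ and $1/(s_ns_{n+1})$ then produce exactly $s_{n-1}g'(t_n-)=s_{n+1}h'(t_n+)$. This establishes (iv) and hence the reducing property.

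For the unitary equivalence I would define $U_n^j\colon \wt{\cF}_n^j\to L^2((t_{n-1},t_{n+1});\mu)$ by gluing $g$ on $I_{n-1}$ and $h$ on $I_n$ into one scalar function. Using $\|{\bf 1}_{s_{n-1}}\otimes{\bf a}^j_{s_n}\|^2=s_{n-1}s_n$ and $\|{\bf a}^j_{s_n}\otimes{\bf 1}_{s_{n+1}}\|^2=s_ns_{n+1}$ from \eqref{eq:norm_a}, together with $\mu=s_{n-1}s_n$ on $I_{n-1}$ and $\mu=s_ns_{n+1}$ on $I_n$, one reads off from \eqref{eq:prodvec} that $U_n^j$ is an isometric isomorphism. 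Since $\bH_0$ acts edgewise as $-d^2/dt^2$ and $\mu$ is piecewise constant, $\bH_0$ is carried into $-g''=\wt\tau_n g$; and the boundary analysis above identifies the image of $\wt{\cF}_n^j\cap\dom(\bH_0)$ as precisely the functions that are $H^2$ on $I_{n-1}$ and $I_n$, vanish at $t_{n-1}$ and $t_{n+1}$, and satisfy the coupling \eqref{eq:bcrhn} at $t_n$, that is, $\dom(\wt{\rh}_n)$.

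The main obstacle I anticipate is purely bookkeeping: tracking the tensor indices and the two distinct normalization constants so that the interior coupling emerges with the asymmetric weights $s_{n-1}$ and $s_{n+1}$ (rather than a symmetric $s_n$), and handling the boundary case $n=1$, where $t_{n-1}=t_0$ is the root and the Neumann-type condition (iii) for $f$ must be shown to degenerate, after projection onto the nontrivial Fourier mode ${\bf a}^j_{s_1}$, into the Dirichlet condition $g(t_0+)=0$.
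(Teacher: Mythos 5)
Your proposal is correct and follows essentially the same route as the paper: invariance plus symmetry of $\bH_0$ reduces everything to checking $\wt{P}_n^j f\in\dom(\bH_0)$ via Lemma \ref{lem:domH}, the endpoint Dirichlet conditions and the interior coupling $s_{n-1}g'(t_n-)=s_{n+1}h'(t_n+)$ follow from the identity $\sum_{k=1}^{s_n}\omega_n^{jk}=0$ combined with continuity and Kirchhoff conditions for $f$, and the weighted $L^2$-isometry (the map \eqref{eq:mapUn}) yields the unitary equivalence with $\wt{\rh}_n$. The only cosmetic difference is that the paper delegates the outer-endpoint and root-vertex verifications to the proof of Lemma \ref{lem:F0restr}, whereas you spell them out explicitly.
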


\begin{proof}
	Since $\wt{\cF}_n^j$ is invariant for $\bH_0$ and $\bH_0$ is symmetric, we only need to show that for every $f \in \dom(\bH_0)$ its projection $\ti{f} := \wt{P}_n^j f$ onto $\wt{\cF}_n^j$ also belongs to $\dom(\bH_0)$. Following step by step the proof of Lemma \ref{lem:F0restr}, we only need to show that $\ti{{\bf f}}:=U\ti{f}$ satisfies condition (iv) of Lemma \ref{lem:domH} at $t_n$. 
	
First observe that by \eqref{eq:P_an}
\be\label{eq:mapUn}
\ti{{\bf f}}(t) = \begin{cases} \ti{f}_{n-1}(t)({\bf 1}_{s_{n-1}}\otimes {\bf a}_{s_n}^j ), & t\in I_{n-1}\\
 \ti{f}_{n}(t)({\bf a}_{s_{n}}^j\otimes {\bf 1}_{s_{n+1}}), & t\in I_{n} \end{cases}
\ee
where
\[
\ti{f}_{n-1}(t) = \frac{1}{s_{n-1}s_{n}}({\bf f}^{n-1}(t), {\bf a}^{s_{n-1},j}_{n-1})_{\cH_{n-1}},\qquad \ti{f}_{n}(t) = \frac{1}{s_{n}s_{n+1}}({\bf f}^n(t), {\bf a}_n^{j,s_{n+1}})_{\cH_n}.
\]	 
Notice that 
\[
\ti{f}_{n-1}(t_n-) =  \frac{1}{s_{n-1}s_{n}}\sum_{k=1}^{s_{n-1}}\sum_{m=1}^{s_n} {\bf f}^{n-1}_{k,m}(t_n-)\omega_{n}^{-jm}=  \frac{1}{s_{n}}\sum_{m=1}^{s_n} {\bf f}^{n-1}_{1,m}(t_n-)\omega_{n}^{-jm}
\]
and
\[
\ti{f}_{n}(t_n+) =  \frac{1}{s_{n}s_{n+1}}\sum_{m=1}^{s_{n}}\sum_{k=1}^{s_{n+1}} {\bf f}^{n}_{m,k}(t_n+)\omega_n^{-jm}=  \frac{1}{s_{n}}\sum_{m=1}^{s_n} {\bf f}^{n}_{m,1}(t_n+)\omega_n^{-jm}.
\]
However, by Lemma \ref{lem:domH}, 
\[
{\bf f}^{n-1}_{1,m}(t_{n}-) = {\bf f}^n_{m,1}(t_n+),\qquad m\in \{1,\dots,s_n\}, 
\]
and hence we get
\begin{align*}
\ti{{\bf f}}^{n-1}_{1,k}(t_{n}-) = \frac{\omega_{n}^{jk}}{s_{n}}\sum_{m=1}^{s_n} {\bf f}^{n-1}_{1,m}(t_n-)\omega_{n}^{-jm} = \frac{\omega_{n}^{jk}}{s_{n}}\sum_{m=1}^{s_n} {\bf f}^{n}_{m,1}(t_n+)\omega_{n}^{-jm} = \ti{{\bf f}}^{n}_{k,1}(t_{n}+)
\end{align*}
for all $k\in\{1,\dots,s_n\}$. This shows that $\ti{{\bf f}}$ satisfies the first equality in condition (iv) of Lemma \ref{lem:domH}. Let us check the second one. However, we have
\begin{align*}
\sum_{k=1}^{s_{n-1}}(\ti{{\bf f}}^{n-1}_{k,m})'(t_n-) & =  \sum_{k=1}^{s_{n-1}}\ti{f}_{n-1}'(t_n-) \omega_n^{jm} = s_{n-1} \ti{f}_{n-1}'(t_n-) \omega_n^{jm} \\ & = \frac{\omega_n^{jm} }{s_{n}}\sum_{l=1}^{s_n}\omega_{n}^{-jl} \sum_{k=1}^{s_{n-1}}({\bf f}^{n-1}_{k,l})'(t_n-) = \frac{\omega_n^{jm} }{s_{n}}\sum_{l=1}^{s_n}\omega_{n}^{-jl} \sum_{k=1}^{s_{n+1}}({\bf f}^{n}_{l,k})'(t_n+) \\
& = s_{n+1} \ti{f}_{n}'(t_n+) \omega_n^{jm}=  \sum_{k=1}^{s_{n+1}}\ti{f}_{n}'(t_n+) \omega_n^{jm} = \sum_{k=1}^{s_{n+1}}(\ti{{\bf f}}^{n}_{m,k})'(t_n+).
\end{align*}
This shows that $\ti{{\bf f}}$ satisfies all the conditions of Lemma \ref{lem:domH} and hence $\ti{f}\in \dom(\bH_0)$. 

Finally, it is straightforward to check that the map $U_{n}^j\colon L^2((t_{n-1},t_{n+1});\mu) \to  \wt{\cF}^j_{n}$ defined by \eqref{eq:mapUn} is an isometric isomorphism and $(U_{n}^j)^{-1} (\bH_0 \upharpoonright \wt{\cF}^j_{n})U_{n}^j = \wt{\rh}_n$.\end{proof}

\subsection{The decomposition of the operator $\bH$}\label{ss:3.4}

Combining the results of Sections \ref{ss:3.1}--\ref{ss:3.3}, we arrive at the following decomposition of quantum graph operators on radially symmetric anti-trees.

\begin{theorem}\label{th:decomp}
Let $\cA$ be an infinite radially symmetric antitree. The decomposition \eqref{eq:basicedecomp} reduces the operator $\bH$. Moreover, with respect to this decomposition, $\bH$ is unitarily equivalent to the following orthogonal sum of Sturm--Liouville operators
\be\label{eq:Hdecomp}
\rH \oplus \bigoplus_{n\ge 1}\Big(\oplus_{j=1}^{(s_n-1)(s_{n+1}-1)} \rh_n\Big)\oplus\bigoplus_{n\ge 1}\Big(\oplus_{j=1}^{s_n-1} \wt{\rh}_n\Big),
\ee
where $\rH = \overline{\rH}_0$ and the operators $\rH_0$, $\rh_n$ and $\wt{\rh}_n$ are defined in Sections \ref{ss:3.1}, \ref{ss:3.2} and \ref{ss:3.3}, respectively.
\end{theorem}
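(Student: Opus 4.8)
The plan is to assemble Theorem~\ref{th:decomp} from the four reduction lemmas already established, treating the closure step with care. First I would observe that Lemma~\ref{lem:dec1} gives the orthogonal decomposition \eqref{eq:basicedecomp} of $L^2(\cA)$, and that Lemmas~\ref{lem:FsymH}, \ref{lem:F0restr}, and \ref{lem:wtFj} show that each summand $\cF_{\sym}$, $\cF_n^0$, and $\wt{\cF}_n^j$ reduces the \emph{pre-minimal} operator $\bH_0$, with the restrictions unitarily equivalent to $\rH_0$, to $(s_n-1)(s_{n+1}-1)$ copies of $\rh_n$, and to $s_n-1$ copies of $\wt{\rh}_n$, respectively. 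Since $\cF_n = \bigoplus_{j=1}^{s_n-1}\wt{\cF}_n^j$ and $\cF_n^0 = \bigoplus P_{{\bf a}_n^{i,j}}^n$ by Lemma~\ref{lem:Psym}, collecting these restrictions yields that $\bH_0$ is unitarily equivalent to the orthogonal sum $\rH_0 \oplus \bigoplus_{n\ge1}(\oplus \rh_n)\oplus\bigoplus_{n\ge1}(\oplus \wt{\rh}_n)$, where crucially the unitary $U$ from \eqref{eq:U} intertwines everything simultaneously because all the individual unitaries $U_s$, $U_{\bf a}^n$, $U_n^j$ are restrictions of $U$ to the respective reducing subspaces.

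The main point requiring attention is the passage from $\bH_0$ to its closure $\bH = \overline{\bH}_0$. The plan is to argue that a common reducing subspace for a closable symmetric operator remains reducing for its closure, and that the closure respects the orthogonal decomposition. Concretely, if $V$ is the global unitary implementing the equivalence for $\bH_0$, then $V\bH_0 V^{-1}$ equals the orthogonal sum of the pre-minimal pieces, and taking closures commutes with conjugation by a unitary, so $V\bH V^{-1} = \overline{V\bH_0 V^{-1}}$. It then remains to identify the closure of an orthogonal sum of operators with the orthogonal sum of their closures. For the summands $\rh_n$ and $\wt{\rh}_n$ this is immediate: these are \emph{regular} Sturm--Liouville operators on bounded intervals with separated (Dirichlet, or Dirichlet/coupling) boundary conditions, hence already self-adjoint, so no closure is needed and each piece is its own closure. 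Only the first summand $\rH_0$ is genuinely pre-minimal, and its closure is denoted $\rH = \overline{\rH}_0$, matching the statement.

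The step I expect to be the genuine obstacle is verifying that the closure of the orthogonal sum equals the orthogonal sum of the closures, i.e.\ $\overline{\bigoplus_k A_k} = \bigoplus_k \overline{A_k}$. This is a standard but not entirely trivial fact: one shows that a vector $g = (g_k)_k$ lies in $\dom(\overline{\bigoplus A_k})$ together with $\overline{\bigoplus A_k}\,g = h$ if and only if each component satisfies $g_k\in\dom(\overline{A_k})$ with $\overline{A_k}g_k = h_k$ and $\sum_k\|h_k\|^2<\infty$. The inclusion $\subseteq$ uses that the component-wise projections are bounded and commute with the sum, so that a graph-convergent approximating sequence for $g$ projects to graph-convergent sequences for each $g_k$; the reverse inclusion $\supseteq$ uses a truncation argument combined with condition (i) of Lemma~\ref{lem:domH} (compact support), which guarantees that finitely-supported sums lie in $\dom(\bH_0)$ and approximate any element of the sum of closures in the graph norm. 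I would phrase this succinctly rather than in full generality. Finally I would invoke Lemma~\ref{lem:dec1} once more to conclude that \eqref{eq:basicedecomp} reduces $\bH$, since reducibility for $\bH_0$ on each subspace passes to the closure, and assemble the pieces into the displayed form \eqref{eq:Hdecomp}, which completes the proof.
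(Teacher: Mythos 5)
Your proposal is correct and follows essentially the same route as the paper: there, Theorem \ref{th:decomp} is obtained precisely by combining Lemma \ref{lem:dec1} with the reduction Lemmas \ref{lem:FsymH}, \ref{lem:F0restr} and \ref{lem:wtFj}, exactly as you do. The only difference is that the paper leaves the passage from $\bH_0$ to its closure $\bH$ implicit, whereas you spell out (correctly) that reducing subspaces and the orthogonal-sum structure persist under closure, that the closure of the algebraic orthogonal sum is the orthogonal sum of the closures, and that the regular pieces $\rh_n$, $\wt{\rh}_n$ are already self-adjoint so only $\rH_0$ genuinely needs to be closed.
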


\section{Self-adjointness}\label{sec:sa}

Theorem \ref{th:decomp} reduces the spectral analysis of quantum graph operators on radially symmetric antitrees to the analysis of certain classes of Sturm--Liouville operators. Moreover, the Sturm--Liouville operators $\rh_n$ and $\wt{\rh}_n$ in the decomposition \eqref{eq:Hdecomp} are self-adjoint for all $n\ge 1$ and their spectra can be computed explicitly. This enables us to perform a rather detailed study of spectral properties of the operator $\bH = \overline{\bH_0}$. We begin with the characterization of self-adjoint extensions of the operator $\bH$. 

\begin{theorem}\label{th:SA}
Let $\cA$ be an infinite radially symmetric antitree. 
Then:
\begin{enumerate}[label=(\roman*), ref=(\roman*), leftmargin=*, widest=iiii]
\item The operator $\bH$ is self-adjoint if and only if the total volume of $\cA$ is infinite,
\be\label{eq:volinfty}
\vol(\cA) := \sum_{e\in \cE(\cA)}|e| = \sum_{n\ge 0} s_n s_{n+1}\ell_n =\infty.
\ee
\item If $\vol(\cA)<\infty$, then the deficiency indices of $\bH$ equal $1$ and self-adjoint extensions of $\bH$ form a one-parameter family $\bH_\theta := \bH^\ast\upharpoonright \dom(\bH_\theta)$, where $\theta\in [0,\pi)$ and 
\begin{align*}
\dom(\bH_\theta):= \{f\in \dom(\bH^\ast)\colon \cos(\theta)(U_sP_{\sym} f)(\cL) + \sin(\theta)(U_sP_{\sym} f)_\mu'(\cL)= 0\}.
\end{align*}
The operators $P_{\sym}$ and $U_s$ are given, respectively, by \eqref{eq:Psym1} and \eqref{eq:Usym} and 
\begin{align}
(U_sP_{\sym} f)(\cL) & := \lim_{t\to \cL} (U_sP_{\sym} f)(t), \label{eq:bc01}
\\ (U_sP_{\sym} f)_\mu'(\cL) & := \lim_{t\to \cL} \mu(t)(U_sP_{\sym} f)'(t).\label{eq:bc02}
\end{align} 
\end{enumerate}
\end{theorem}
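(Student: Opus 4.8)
\emph{The plan.} The decomposition \eqref{eq:Hdecomp} of Theorem \ref{th:decomp} reduces the whole question to a single operator. Indeed, each $\rh_n$ and each $\wt\rh_n$ is a regular Sturm--Liouville operator on a compact interval subject to (separated, respectively coupling) self-adjoint boundary conditions, hence self-adjoint; an orthogonal sum of self-adjoint operators is self-adjoint and has vanishing deficiency indices. Consequently the deficiency indices of $\bH$ coincide with those of $\rH=\overline{\rH_0}$, and it suffices to carry out a Weyl limit-point/limit-circle (LP/LC) analysis of $\rH$ on $L^2(\cI_\cL;\mu)$. The endpoint $t=0$ is regular and the Neumann condition fixed there is a self-adjoint boundary condition, so the deficiency indices of $\rH$ are governed entirely by the endpoint $\cL$: they equal $0$ if $\cL$ is LP and $1$ if $\cL$ is LC.

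\emph{Endpoint classification.} First I would solve $\tau u=0$: since $(\mu u')'=0$, a fundamental system is $u_1\equiv 1$ and $u_2(t)=\int_0^t \mu(s)^{-1}\,ds$. The endpoint $\cL$ is LC precisely when both $u_1,u_2\in L^2((\,\cdot\,,\cL);\mu)$, and LP otherwise. Now $\int_0^\cL \mu(t)\,dt=\sum_{n\ge0}s_ns_{n+1}\ell_n=\vol(\cA)$, so $u_1\in L^2(\mu)$ near $\cL$ if and only if $\vol(\cA)<\infty$. If $\vol(\cA)=\infty$ then $u_1\notin L^2(\mu)$ near $\cL$, whence $\cL$ is LP and $\rH$ (and thus $\bH$) is self-adjoint. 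Conversely, since $s_n\ge1$ for all $n$ we have $\mu\ge 1$, so $\vol(\cA)<\infty$ forces $\cL\le\vol(\cA)<\infty$; then $u_2$ is bounded ($0\le u_2\le \cL$) and $\mu\in L^1(0,\cL)$, so both $u_1$ and $u_2$ lie in $L^2(\mu)$ near $\cL$. Hence $\cL$ is LC and the deficiency indices of $\rH$, and therefore of $\bH$, equal $1$. This proves (i) and the first assertion of (ii).

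\emph{Self-adjoint extensions.} In the LC case the two boundary functionals $f(\cL):=\lim_{t\to\cL}f(t)$ and $(\mu f')(\cL):=\lim_{t\to\cL}\mu(t)f'(t)$ are well defined on $\dom(\rH^\ast)$: the second is the limit of the modified Wronskian of $f$ with $u_1\equiv1$, and combined with the (LC) existence of the Wronskian of $f$ with $u_2$ and the boundedness of $u_2$ it forces the first limit to exist as well. Standard extension theory for a symmetric operator with deficiency indices $(1,1)$ realizes every self-adjoint extension as the restriction of $\rH^\ast$ by one real boundary condition. The family $\cos\theta\,f(\cL)+\sin\theta\,(\mu f')(\cL)=0$, $\theta\in[0,\pi)$, is symmetric because the Lagrange bracket $(\mu f')(\cL)\overline{g(\cL)}-f(\cL)\overline{(\mu g')(\cL)}$ vanishes whenever $f,g$ both satisfy it; hence each member is self-adjoint and, by the dimension count, they exhaust all self-adjoint extensions. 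Transporting the condition through the unitary $U_s$ of \eqref{eq:Usym}, which intertwines $\rH$ with $\bH\upharpoonright\cF_{\sym}$, and recalling that no condition is needed on the already self-adjoint summands, yields exactly the description of $\dom(\bH_\theta)$ in the statement, with \eqref{eq:bc01}--\eqref{eq:bc02} as the two boundary values.

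\emph{Main obstacle.} The reduction to $\rH$ and the LP/LC dichotomy are routine once the two solutions of $\tau u=0$ and the identity $\int_0^\cL\mu=\vol(\cA)$ are noted; the small but decisive point is the elementary bound $\mu\ge1$, which excludes the case $\cL=\infty$, $\vol(\cA)<\infty$ and simultaneously secures $u_2\in L^2(\mu)$. The part requiring genuine care is the limit-circle extension theory at $\cL$: verifying that the two limits in \eqref{eq:bc01}--\eqref{eq:bc02} exist for every $f\in\dom(\bH^\ast)$, that the displayed boundary condition is self-adjoint, and that it matches $\dom(\bH_\theta)$ after passing through $U_s$ and the orthogonal decomposition \eqref{eq:Hdecomp}.
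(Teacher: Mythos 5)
Your proposal is correct and follows essentially the same route as the paper: reduction via the decomposition \eqref{eq:Hdecomp} to the operator $\rH$, Weyl limit point/limit circle analysis at $\cL$ using the fundamental system $u_1\equiv 1$, $u_2=\int_0^t\mu^{-1}$, and the standard limit-circle description of all self-adjoint extensions by one boundary condition, transported back through $U_s$ and $P_{\sym}$. The only (harmless) deviation is that you verify $u_2\in L^2(\mu)$ near $\cL$ by the direct bound $\mu\ge 1\Rightarrow \cL\le\vol(\cA)$ and $0\le u_2\le\cL$, where the paper instead writes out the series \eqref{eq:saHX0} and invokes $s_ns_{n+1}\ge 1$ — the same observation in a slightly different dress.
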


\begin{proof}
(i) By Theorem \ref{th:decomp}, the operator $\bH$ is self-adjoint only if so are the operators on the right-hand side in the decomposition \eqref{eq:Hdecomp}. However, both $\rh_n$ and $\wt{\rh}_n$ are self-adjoint for all $n\ge 1$. The self-adjointness criterion for $\rH = \overline{\rH_0}$ follows from the standard limit point/limit circle classification (see, e.g., \cite{wei}). Namely, the equation $\tau y = 0$ with $\tau$ given by \eqref{eq:tauA}, 
has two linearly independent solutions  
\begin{align*}
y_1(t) & \equiv 1, & y_2(t) & = \int_0^t \frac{ds}{\mu(s)}.
\end{align*}
Now one simply needs to verify whether or not both solutions $y_1$ and $y_2$ belong to $L^2(\cI_\cL;\mu)$. Clearly, $y_1\in L^2(\cI_\cL;\mu)$ exactly when 
the series in \eqref{eq:volinfty} converges. 
Moreover, it is straightforward to check that  
$y_2\in L^2(\cI_\cL;\mu)$ if and only if the series
\be\label{eq:saHX0}
\sum_{n\ge 0}s_n s_{n+1}\ell_n\Big( \sum_{k\le n}\frac{\ell_k}{s_k s_{k+1}}\Big)^2 
\ee
converges. Since $s_{n}s_{n+1}\ge 1$ for all $n\ge 0$, this series converges exactly when the series in \eqref{eq:volinfty} converges. The Weyl alternative finishes the proof of (i).

(ii) The above considerations imply that the deficiency indices of $\bH$ and $\rH$ coincide. However, the deficiency indices of $\rH$ are at most $1$. Thus, if the operator $\rH$ is not self-adjoint, its deficiency indices equal $1$. Moreover, one can easily describe all self-adjoint extensions of $\rH$. First of all, for every $f\in \dom(\rH_0^\ast) = \dom(\rH^\ast)$ the following limits
\begin{align*}
 \lim_{t\to\cL} & W_t(f,y_1), &  \lim_{t\to\cL} & W_t(f,y_2)
\end{align*}
exist and are finite (see, e.g., \cite{wei}). Here $W_t(f,g) = f(t) (\mu g')(t)- (\mu f')(t)g(t)$ is the modified Wronskian.  
Thus  for every $f\in \dom(\rH_0^\ast)$ the following limits
\begin{align}
f(\cL) & := \lim_{t\to \cL} f(t), & f_\mu'(\cL) & := \lim_{t\to\cL}\mu(t)f'(t)
\end{align}
exist and are finite. Hence self-adjoint extensions of $\rH$ form a one-parameter family 
\[
\dom(\rH(\theta)) := \big\{ f\in \dom(\rH_0^\ast)|\, \cos(\theta)f(\cL) + \sin(\theta)f_\mu'(\cL) = 0\big\},\quad \theta\in [0,\pi).
\]
It remains to use \eqref{eq:rH0equivFsym} and  \eqref{eq:Psym1}.
\end{proof}

\begin{remark}
Let us mention that in the case $\vol(\cA)<\infty$ the Friedrichs extension of $\bH$ coincides with the operator $\bH_\theta$ with $\theta=0$. Moreover, it is possible to show that in fact the limits in \eqref{eq:bc01} and \eqref{eq:bc02} coincide with 
\begin{align*}
\lim_{|x|\to \cL} & f(x), & \lim_{t\to \cL} & \sum_{|x|=t}f'(x)
\end{align*}
for every $f$ in the domain of $\bH^\ast$. In particular, this would imply that the Friedrichs extension of $\bH$ is simply given as the restriction of $\bH^\ast$ to functions vanishing at $\cL$. Let us also mention that $\bH^\ast = \bH_0^\ast$ in fact coincides with the maximal operator, that is $\dom(\bH^\ast)$ consists of functions $f\in L^2(\cA)\cap H^2(\cA\setminus \cV)$ satisfying boundary conditions \eqref{eq:kirchhoff} for all $v\in \cV$ and such that $f'' \in L^2(\cA)$. 
\end{remark}

\section{Discreteness}\label{sec:discr}
As an immediate corollary of Theorem \ref{th:SA} we obtain the following result.

\begin{corollary}\label{cor:finvol}
If $\vol(\cA)<\infty$, then the spectrum of each self-adjoint extension $\bH_\theta$ of $\bH$ is purely discrete and, moreover, 
\be\label{eq:WeylLaw}
N(\lambda;\bH_\theta) = \frac{\vol(\cA)}{\pi}\sqrt{\lambda} (1+o(1)),\quad \lambda\to \infty,
\ee
for all $\theta\in [0,\pi)$.
\end{corollary}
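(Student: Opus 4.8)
The plan is to reduce the spectral statement to the single nontrivial summand $\rH$ in the decomposition \eqref{eq:Hdecomp}, since by Theorem \ref{th:SA} the finiteness of $\vol(\cA)$ is precisely the condition under which $\rH=\overline{\rH_0}$ fails to be self-adjoint, and its self-adjoint extensions $\rH(\theta)$ correspond exactly to the $\bH_\theta$. First I would observe that the operators $\rh_n$ and $\wt{\rh}_n$ are regular Sturm--Liouville operators on bounded intervals with Dirichlet (resp. Dirichlet/coupling) conditions, hence each has purely discrete spectrum bounded below; the remaining task is therefore to show that $\rH(\theta)$ has purely discrete spectrum and to control the eigenvalue counting functions of all summands well enough to add them up.

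The key to discreteness of $\rH(\theta)$ is the observation that $\vol(\cA)<\infty$ means $\mu\in L^1(\cI_\cL)$, so the weighted space $L^2(\cI_\cL;\mu)$ is a \emph{finite-measure} space and the endpoint $\cL$ is in the limit circle case. A regular-type argument then applies: the resolvent of $\rH(\theta)$ is an integral operator with a kernel that is square-integrable against $\mu\otimes\mu$ (this uses that the solutions $y_1\equiv 1$ and $y_2$ of $\tau y=0$ both lie in $L^2(\cI_\cL;\mu)$, which was verified in the proof of Theorem \ref{th:SA}), hence Hilbert--Schmidt and in particular compact. Thus $\sigma(\rH(\theta))$ is purely discrete, and combined with the discreteness of the $\rh_n,\wt{\rh}_n$ the full operator $\bH_\theta$ has purely discrete spectrum. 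I would make this rigorous via the change of variables to Krein string form announced in the introduction, where discreteness for a string of finite length and finite mass is classical.

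For the Weyl asymptotics \eqref{eq:WeylLaw}, the plan is to count eigenvalues summand by summand and show the leading terms aggregate to $\frac{\vol(\cA)}{\pi}\sqrt{\lambda}$. Each $\rh_n$ on $I_n=[t_n,t_{n+1})$ with Dirichlet conditions has eigenvalues $(k\pi/\ell_n)^2$, giving a contribution $N(\lambda;\rh_n)=\frac{\ell_n}{\pi}\sqrt{\lambda}+O(1)$, and it appears with multiplicity $(s_n-1)(s_{n+1}-1)$; likewise each $\wt{\rh}_n$ lives on $(t_{n-1},t_{n+1})$ of length $\ell_{n-1}+\ell_n$ with multiplicity $s_n-1$, contributing $\frac{\ell_{n-1}+\ell_n}{\pi}\sqrt{\lambda}+O(1)$; and $\rH(\theta)$ itself, being a string of total mass $\int_0^\cL\mu\,dt=\vol(\cA)$, contributes $\frac{1}{\pi}\big(\int_0^\cL\sqrt{\mu(t)}\,dt\big)\sqrt{\lambda}(1+o(1))$ by the classical Weyl law for strings. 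Summing the multiplicities of the $\rh_n$ and $\wt{\rh}_n$ terms weighted by their lengths should exactly reconstruct $\sum_{n}s_ns_{n+1}\ell_n=\vol(\cA)$, with the string contribution and all $O(1)$ remainders absorbed into the $o(1)$.

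The main obstacle I anticipate is the interchange of the summation over $n$ with the asymptotic limit $\lambda\to\infty$: one must justify that the infinitely many $O(1)$ error terms, together with the number of summands whose lowest eigenvalue exceeds $\lambda$, do not spoil the leading $\sqrt\lambda$ coefficient. This is handled by a two-sided bracketing (Dirichlet--Neumann bracketing, or monotonicity of eigenvalue counts under the string mass) proving that the counting function of $\bH_\theta$ is squeezed between $\frac{\vol(\cA)\mp\varepsilon}{\pi}\sqrt\lambda$ for large $\lambda$; the finiteness $\vol(\cA)<\infty$ guarantees the tail of the length series is negligible, which is exactly what makes the uniform control possible. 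The cleanest route is simply to note that $\bH_\theta$ itself, being a self-adjoint Sturm--Liouville-type operator on $\cA$ of finite total length, admits a global Weyl law with constant $\vol(\cA)/\pi$, so that \eqref{eq:WeylLaw} follows directly and the summand-by-summand bookkeeping is needed only as a consistency check.
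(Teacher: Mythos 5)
Your overall strategy is the same as the paper's: decompose via \eqref{eq:Hdecomp}, prove discreteness of each summand, and add up the eigenvalue counting functions. The discreteness half is essentially correct (the paper notes, as you implicitly do, that $\vol(\cA)<\infty$ forces $\ell_n\to 0$, so the union $\cup_n\sigma(\rh_n)\cup\sigma(\wt{\rh}_n)$ has no finite accumulation point, and $\rH(\theta)$ has compact resolvent because both solutions of $\tau y=0$ lie in $L^2(\cI_\cL;\mu)$, i.e.\ the endpoint $\cL$ is limit circle). However, the Weyl-law half of your argument contains a genuine error: the asymptotic coefficient you assign to $\rH(\theta)$ is wrong, and as a consequence your bookkeeping does not close. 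The operator $\rH$ is $-\frac{1}{\mu}\frac{d}{dt}\mu\frac{d}{dt}$, where the weight and the leading coefficient \emph{coincide}; its counting function therefore satisfies $N(\lambda;\rH(\theta))=\frac{\cL}{\pi}\sqrt{\lambda}(1+o(1))$ with $\cL=\sum_n\ell_n$, not $\frac{1}{\pi}\bigl(\int_0^{\cL}\sqrt{\mu}\,dt\bigr)\sqrt{\lambda}$ as you claim. (You applied the Krein-string Weyl law as if $\mu$ were the mass density of a string $-\frac{1}{\mu}\frac{d^2}{dt^2}$; after the correct change of variables the string density is $\wt\mu=\mu^2\circ g^{-1}$ on $[0,\cL_\mu)$, and $\int_0^{\cL_\mu}\sqrt{\wt\mu}\,dx=\cL$, consistent with the $\cL/\pi$ coefficient.)

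This error is not harmless, because the remaining summands do \emph{not} reconstruct $\vol(\cA)$ by themselves. A direct computation gives
\begin{equation*}
\sum_{n\ge1}(s_n-1)(s_{n+1}-1)\ell_n+\sum_{n\ge1}(s_n-1)(\ell_{n-1}+\ell_n)
=\sum_{n\ge0}\bigl(s_ns_{n+1}-1\bigr)\ell_n=\vol(\cA)-\cL ,
\end{equation*}
so the contribution of the $\rh_n$ and $\wt{\rh}_n$ parts is $\frac{\vol(\cA)-\cL}{\pi}\sqrt{\lambda}$, and the string's $\frac{\cL}{\pi}\sqrt{\lambda}$ is a genuine leading-order term that exactly fills the deficit; it cannot be ``absorbed into the $o(1)$'' as you assert. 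Carried out as written, your computation yields either $\frac{\vol(\cA)-\cL}{\pi}$ or $\frac{1}{\pi}\bigl(\vol(\cA)-\cL+\sum_n\sqrt{s_ns_{n+1}}\,\ell_n\bigr)$ as the Weyl constant, neither of which equals $\frac{\vol(\cA)}{\pi}$ unless all $s_ns_{n+1}=1$. Finally, your proposed ``cleanest route'' is circular: a global Weyl law with constant $\vol(\cA)/\pi$ for a metric graph with \emph{infinitely many} edges is precisely the content of the corollary being proved (it is standard only for compact graphs with finitely many edges), so it cannot be cited as known. Your concern about interchanging the infinite sum with $\lambda\to\infty$ is legitimate and your bracketing/tail-control idea is the right fix (note $N(\lambda;\rh_n)\le\frac{\ell_n}{\pi}\sqrt\lambda$ and only finitely many summands are nonzero at each fixed $\lambda$ since $\ell_n\to0$), but it rescues only the summation step, not the incorrect leading coefficients.
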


Here $N(\lambda;A)$ is the eigenvalue counting function of a (bounded from below) self-adjoint operator  $A$ with purely discrete spectrum. Namely,
\[
N(\lambda;A) = \# \{k\colon \lambda_k(A) \le \lambda\},
\]
where $\{\lambda_k(A)\}_{k\ge 0}$ are the eigenvalues of $A$ (counting multiplicities) ordered in the increasing order.

\begin{proof}
By Theorem \ref{th:decomp}, 
\be
\sigma(\bH_\theta) = \sigma(\rH(\theta)) \cup \ol{ \cup_{n\ge 1}\sigma({\rh}_n)} \cup \ol{ \cup_{n\ge 1}\sigma(\wt{\rh}_n)}.
\ee
Since $s_n\ge 1$ for all $n\ge 1$, $\vol(\cA)<\infty$ implies that $\ell_n = o(1)$ as $n\to \infty$ and hence both sets $\cup_{n\ge 1}\sigma({\rh}_n)$ and $\cup_{n\ge 1}\sigma(\wt{\rh}_n)$ have no finite accumulation points. It remains to note that the spectrum of $\rH(\theta)$ is discrete in this case as well.

According to the decomposition \eqref{eq:Hdecomp}, we clearly have
\[
N(\lambda;\bH_\theta) = N(\lambda;\rH(\theta)) + \sum_{n\ge 1} (s_{n}-1)(s_{n+1}-1)N(\lambda;\rh_n) + \sum_{n\ge 1}(s_{n}-1)N(\lambda;\wt{\rh}_n).
\]
It is well known that (cf., e.g., \cite[Chapter 6.7]{gohkre})
\[
N(\lambda;\rH(\theta)) = \frac{\cL}{\pi}\sqrt{\lambda} (1+o(1)),\quad \lambda\to \infty,
\]
for all $\theta\in[0,\pi)$. 
Taking into account that 
\begin{align}\label{eq:sigma_wthn}
N(\lambda;\rh_n) & = \frac{\ell_n}{\pi}\sqrt{\lambda} (1+o(1)), & N(\lambda;\wt{\rh}_n) & = \frac{\ell_{n-1}+\ell_{n}}{\pi}\sqrt{\lambda} (1+o(1)),
\end{align}
we immediately arrive at \eqref{eq:WeylLaw}.
\end{proof}

\begin{remark}\label{rem:sigma_n}
Recall that 
\be\label{eq:sigma_hn}
\sigma({\rh}_n) = \left\{\frac{\pi^2k^2}{\ell_n^2}\right\}_{k\ge 1}.
\ee
However, we are not aware (except a few special cases) of a closed form of eigenvalues of $\wt{\rh}_n$.  It is not difficult to show that $\sigma(\wt{\rh}_n)$ consists of simple positive eigenvalues $\{\wt{\lambda}_k\}_{k\ge 1}$ satisfying \eqref{eq:sigma_wthn} and even to express $\sigma(\wt{\rh}_n)$ with the help of the {\em arctangent function with two arguments}, \iflong{}(see Appendix \ref{app:atan2})\fi although this does not lead to a closed formula. 
\end{remark}

In the case $\vol(\cA)=\infty$, the spectrum of $\bH$ may have a rather complicated structure. In particular, it may not be purely discrete. The next result provides a criterion for $\bH$ to have purely discrete spectrum. Set
\be\label{eq:Lmu}
\cL_\mu := \int_0^{\cL}\frac{d x}{\mu(x)} = \sum_{n\ge 0} \frac{\ell_n}{s_ns_{n+1}}.
\ee

\begin{theorem}\label{th:discr}
Let $\cA$ be an infinite radially symmetric antitree with $\vol(\cA)=\infty$.  
Then the spectrum of $\bH$ is discrete if and only if the following conditions are satisfied:
\begin{itemize}
\item[{\rm (i)}] $\ell_n\to 0$ as $n\to \infty$, 
\item[{\rm (ii)}] $\cL_\mu <\infty$, and
\item[{\rm (iii)}] 
\be\label{eq:KK07}
\lim_{n\to \infty}\sum_{k=0}^n s_ks_{k+1}\ell_k \sum_{k\ge n} \frac{\ell_k}{s_ks_{k+1}} = 0.
\ee
\end{itemize}
\end{theorem}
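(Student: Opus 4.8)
The plan is to reduce the discreteness of $\bH$ to the discreteness of the single nontrivial summand $\rH$ in the decomposition \eqref{eq:Hdecomp}, and then to apply the spectral theory of Krein strings. First I would observe that, by Theorem \ref{th:decomp}, $\sigma(\bH)$ is the closure of the union $\sigma(\rH)\cup\bigcup_{n\ge1}\sigma(\rh_n)\cup\bigcup_{n\ge1}\sigma(\wt\rh_n)$. The operators $\rh_n$ and $\wt\rh_n$ are regular Sturm--Liouville operators on bounded intervals, so each has purely discrete spectrum; the only way the full spectrum can fail to be discrete is either through a nonzero accumulation point of these eigenvalues or through nondiscreteness of $\rH$. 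Using \eqref{eq:sigma_hn} and the asymptotics \eqref{eq:sigma_wthn}, the smallest eigenvalue of $\rh_n$ is $\pi^2/\ell_n^2$, and similarly the bottom of $\sigma(\wt\rh_n)$ is comparable to $1/(\ell_{n-1}+\ell_n)^2$; hence the union of these spectra escapes to $+\infty$ (has no finite accumulation point) precisely when $\ell_n\to0$, which is condition (i). So (i) is exactly the condition that the ``regular'' part of the decomposition contributes discrete spectrum.

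It then remains to characterize discreteness of $\rH=\overline{\rH_0}$, and here I would pass to the Krein string form. As indicated in the introduction (see \eqref{eq:wtH}), a change of the independent variable $t\mapsto \int_0^t ds/\mu(s)$ transforms $\rH$ into a string operator $-\frac{d^2}{d\omega\,dx}$, trading the weight $\mu$ for a mass distribution. The relevant parameters of the resulting string are its length and total mass: the length corresponds to $\cL_\mu=\int_0^{\cL}dx/\mu(x)$ from \eqref{eq:Lmu}, and the mass corresponds to $\vol(\cA)=\int_0^{\cL}\mu\,dx$. I would then invoke the classical discreteness criterion of Kac and Krein from \cite{kakr58,kakr74}: for a string with length $L$ and mass distribution $dM$, the associated operator has discrete spectrum if and only if $L<\infty$ together with the ``vanishing tail-product'' condition $\lim_{x}\big(\text{mass on }[x,L]\big)\cdot\big(\text{length of }[0,x]\big)=0$ (or the symmetric variant). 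Translating back through the change of variables, finiteness of the string length is exactly condition (ii), $\cL_\mu<\infty$, and the Kac--Krein tail condition becomes precisely \eqref{eq:KK07} in condition (iii).

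The main obstacle, and the step requiring the most care, is making the dictionary between $\rH$ and the Krein string completely rigorous — in particular verifying that the Neumann boundary condition at $t=0$ for $\rH_0$ matches the correct endpoint convention for the string, and that the self-adjointness hypothesis $\vol(\cA)=\infty$ guarantees the string is of the correct (limit-point) type so that the Kac--Krein discreteness theorem applies verbatim with a single operator rather than a family of extensions. One subtlety is that $\vol(\cA)=\infty$ forces the total mass of the string to be infinite, so that discreteness cannot come for free and genuinely requires the interplay of (ii) and (iii); I would check that under $\vol(\cA)=\infty$ the product in \eqref{eq:KK07} is the sharp quantity controlling the accumulation of eigenvalues at the origin, exactly as in the Kac--Krein formulation. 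The remaining verification — that conditions (i), (ii), (iii) are jointly necessary and sufficient — then follows by combining the escape-to-infinity argument for the regular summands with the Kac--Krein criterion for $\rH$, noting that discreteness of $\bH$ requires both simultaneously.
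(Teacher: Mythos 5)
Your overall strategy coincides with the paper's: reduce via the decomposition \eqref{eq:Hdecomp}, then transform $\rH$ into a Krein string and invoke the Kac--Krein discreteness criterion. Two steps, however, are glossed over in a way that leaves genuine gaps. First, the Kac--Krein criterion, translated back through the change of variables $x\mapsto\int_0^x \frac{ds}{\mu(s)}$, does not give condition (iii) ``precisely''; it gives the continuous condition $\lim_{x\to\cL}\Phi(x)=0$, where $\Phi(x)=\int_0^x\mu(s)\,ds\cdot\int_x^{\cL}\frac{ds}{\mu(s)}$. Condition \eqref{eq:KK07} is not simply $\Phi$ sampled at the points $t_n$: the two partial sums there overlap in the index $k=n$, so the $n$-th product in \eqref{eq:KK07} strictly dominates $\Phi(t_{n+1})$, and one direction of the equivalence genuinely requires evaluating $\Phi$ at interior points of the intervals. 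The paper closes this gap with two-sided estimates: $\Phi(x)\le\sum_{k=0}^n s_ks_{k+1}\ell_k\sum_{k\ge n}\frac{\ell_k}{s_ks_{k+1}}$ for all $x\in[t_n,t_{n+1})$ (sufficiency of (iii)), and $\Phi\bigl(\frac{t_n+t_{n+1}}{2}\bigr)\ge\frac14\sum_{k=0}^n s_ks_{k+1}\ell_k\sum_{k\ge n}\frac{\ell_k}{s_ks_{k+1}}$ (necessity). Some version of this bridging argument must appear; asserting that the tail condition ``becomes precisely \eqref{eq:KK07}'' skips it.

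Second, your necessity argument for (i) is incomplete. In \eqref{eq:Hdecomp} the operators $\rh_n$ and $\wt\rh_n$ occur with multiplicities $(s_n-1)(s_{n+1}-1)$ and $s_n-1$ respectively, and these vanish whenever $s_n=1$ or $s_{n+1}=1$. For an antitree with $s_k=s_{k+1}=1$ for infinitely many $k$, the corresponding lengths $\ell_k$ are invisible to the regular part of the decomposition, so your claim that ``the union of these spectra escapes to $+\infty$ precisely when $\ell_n\to0$'' fails in the only-if direction: the regular part can be discrete (even trivial) while $\ell_n\not\to0$. The paper obtains necessity of (i) instead as a byproduct of the string analysis: the $n$-th product in \eqref{eq:KK07} is at least $s_ns_{n+1}\ell_n\cdot\frac{\ell_n}{s_ns_{n+1}}=\ell_n^2$, so (iii) forces (i) (cf.\ Remark \ref{rem:discr}). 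With these two repairs your argument goes through and is essentially the published proof.
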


\begin{proof}
Denote 
\begin{align}\label{eq:summands}
\bH^1 & :=\bigoplus_{n\ge 1}\Big(\oplus_{j=1}^{(s_n-1)(s_{n+1}-1)} \rh_n\Big), & 
\bH^2 & := \bigoplus_{n\ge 1}\Big(\oplus_{j=1}^{s_n-1} \wt{\rh}_n\Big).
\end{align}
By Theorem \ref{th:SA}(i), $\bH$ is self-adjoint and hence \eqref{eq:Hdecomp} implies that
\be\label{eq:bHspec}
\sigma(\bH) = \sigma(\rH) \cup \sigma(\bH^1)\cup\sigma(\bH^2) = \sigma(\rH) \cup \ol{ \big(\cup_{n\ge 1}\sigma({\rh}_n)\big)} \cup \ol{ \big(\cup_{n\ge 1}\sigma(\wt{\rh}_n)\big)}.
\ee
Thus the spectrum of $\bH$ is discrete if and only if the spectra of all three operators $\rH$, $\bH^1$ and $\bH^2$ are discrete. 

In order to investigate the operator $\rH$, we  need to transform it to the Krein string form by using a suitable change of variables ($x\mapsto \int_0^x \frac{ds}{\mu(s)}$) and then to apply the Kac--Krein criterion \cite{kakr58}. To be more precise,
 it is straightforward to verify that $\rH$ is unitarily equivalent to the operator $\wt{\rH}$ defined in the Hilbert space $L^2([0,\cL_\mu);\wt\mu)$
by the differential expression
\be\label{eq:wtH}
\wt{\tau} = -\frac{1}{\wt\mu(x)}\frac{d^2}{d x^2}
\ee
and subject to the Neumann boundary condition at $x=0$. Here 
\be\label{eq:mu_g}
\wt\mu := \mu^2\circ g^{-1},
\ee
where $g^{-1}$ is the inverse of the function $g\colon [0,\cL)\to [0,\cL_\mu) $ given by
\begin{align}\label{eq:g_def}
g(x) & = \int_0^x \frac{ds}{\mu(s)}, & \cL_\mu & := g(\cL) = \int_0^\cL \frac{ds}{\mu(s)} .
\end{align}
Notice that $g$ is strictly increasing and locally absolutely continuous on $[0,\cL)$ and maps $[0,\cL)$ onto $[0,\cL_\mu)$. Hence its inverse $g^{-1}\colon [0,\cL_\mu)\to [0,\cL)$ is also strictly increasing and locally absolutely continuous on $[0,\cL_\mu)$.

Applying the Kac--Krein criterion (see \cite{kakr58}, \cite[\S 11.9]{kakr74}), we conclude that $\rH$ has purely discrete spectrum if and only if $\cL_\mu<\infty$
and
\be\label{eq:KK06}
\lim_{x\to\cL} \Phi(x) = 0,
\ee
where $\Phi\colon [0,\cL)\to \R_{\ge 0}$ is given by 
\be\label{eq:cM}
\Phi(x):= \int_0^x\mu(s)ds\cdot\int_x^{\cL}\frac{ds}{\mu(s)},\quad x\in[0,\cL).
\ee
First of all, observe that
\[
\Phi(x) \le \int_0^{t_{n+1}}\mu(s)ds\cdot\int_{t_n}^{\cL}\frac{ds}{\mu(s)} = \sum_{k=0}^n s_ks_{k+1}\ell_k \sum_{k\ge n} \frac{\ell_k}{s_ks_{k+1}}
\]
for all $x\in [t_n,t_{n+1})$ and hence sufficiency of  \eqref{eq:KK07} follows. 
Moreover, straightforward calculations show that
\begin{align*}
   \Phi\Big(\frac{t_n+t_{n+1}}{2}\Big)
   & = \Big(\sum_{k=0}^{n-1} s_ks_{k+1}\ell_k + s_ns_{n+1}\frac{\ell_n}{2}\Big)
         \Big(\sum_{k\ge n+1} \frac{\ell_k}{s_ks_{k+1}} + \frac{\ell_{n}}{2s_ns_{n+1}}\Big) \\
   & \ge \frac{1}{4}\sum_{k=0}^n s_ks_{k+1}\ell_k \sum_{k\ge n} \frac{\ell_k}{s_ks_{k+1}},
\end{align*}
which implies the necessity of \eqref{eq:KK07}. Notice also that the right-hand side in the last inequality is strictly greater than $\frac{1}{4}\ell_n^2$, which also implies (i). 

It remains to note that the spectra of the operators $\bH^1$ and $\bH^2$
 are discrete if condition (i) is satisfied (see \eqref{eq:sigma_hn} and \eqref{eq:sigma_wthn}). 
\end{proof}

\begin{remark}\label{rem:discr}
Let us mention that in fact both conditions (i) and (ii) in Theorem \ref{th:discr} follow from (iii).
\end{remark}

If $\vol(\cA) =\infty$ and the corresponding Hamiltonian $\bH$ has purely discrete spectrum, it follows from the proof of Weyl's law \eqref{eq:WeylLaw} that $\frac{N(\lambda;\bH)}{\sqrt{\lambda}} \to \infty$ as $\lambda\to \infty$. However, we can characterize radially symmetric antitress such that the resolvent of the corresponding quantum graph operator $\bH$ belongs to the trace class.

\begin{theorem}\label{th:trace}
Let $\cA$ be an infinite radially symmetric antitree with $\vol(\cA)=\infty$. Also, let the spectrum of $\bH$ be purely discrete. 
Then\footnote{The summation in \eqref{eq:Htrace} is according to multiplicities.}
\be\label{eq:Htrace}
\sum_{\lambda \in \sigma(\bH)} \frac{1}{\lambda} <\infty
\ee
if and only if 
\be\label{eq:trace01}
\sum_{n\ge 1} s_ns_{n+1}\ell_n^2 <\infty,
\ee
and
\be\label{eq:KK08}
 \sum_{n\ge 0}\frac{\ell_n}{s_ns_{n+1}} \sum_{k=0}^{n-1}s_ks_{k+1}\ell_k <\infty.
\ee
\end{theorem}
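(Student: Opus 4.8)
The plan is to exploit the orthogonal decomposition \eqref{eq:Hdecomp} to split the trace of the resolvent into three contributions and to evaluate each through the corresponding Sturm--Liouville Green's function. First I would record that, since $\vol(\cA)=\infty$, the constant function does not belong to $L^2(\cI_\cL;\mu)$ and hence $0$ is not an eigenvalue of $\rH$; as the operators $\rh_n$ and $\wt{\rh}_n$ are strictly positive, discreteness of $\sigma(\bH)$ forces $\inf\sigma(\bH)>0$. Consequently $\bH^{-1}$ is a nonnegative compact operator, and with $\bH^1,\bH^2$ as in \eqref{eq:summands},
\[
\sum_{\lambda\in\sigma(\bH)}\frac1\lambda=\tr(\rH^{-1})+\tr((\bH^1)^{-1})+\tr((\bH^2)^{-1}),
\]
all three summands being nonnegative. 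Thus \eqref{eq:Htrace} holds if and only if each of the three traces is finite, and it remains to match them against \eqref{eq:trace01} and \eqref{eq:KK08}.

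The two ``fibre'' contributions are elementary. Using $\sigma(\rh_n)=\{\pi^2k^2/\ell_n^2\}_{k\ge1}$ from \eqref{eq:sigma_hn} together with $\sum_{k\ge1}k^{-2}=\pi^2/6$ gives $\tr(\rh_n^{-1})=\ell_n^2/6$, whence $\tr((\bH^1)^{-1})=\frac16\sum_{n\ge1}(s_n-1)(s_{n+1}-1)\ell_n^2$. For $\wt{\rh}_n$ I would compute its Green's function at $\lambda=0$ from the homogeneous solutions $1$ and $\int ds/\mu$ with Dirichlet data at $t_{n-1},t_{n+1}$, obtaining $\tr(\wt{\rh}_n^{-1})=\int_{t_{n-1}}^{t_{n+1}}G_n(t,t)\mu(t)\,dt$. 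A short computation of this integral yields the two-sided bound $\tfrac16(\ell_{n-1}^2+\ell_n^2)\le\tr(\wt{\rh}_n^{-1})\le\tfrac12(\ell_{n-1}^2+\ell_n^2)$, so that after summing against the multiplicity $s_n-1$ and reindexing, $\tr((\bH^2)^{-1})$ is comparable to $\sum_{n}\big[(s_n-1)+(s_{n+1}-1)\big]\ell_n^2$.

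The essential step --- and the main obstacle --- is the symmetric part $\rH$, where the endpoint $\cL$ is in the limit point case ($\mu$ accumulates while $\cL_\mu<\infty$), so that no global $L^2$ solution of $\tau u=0$ exists unless $\cL_\mu<\infty$ is used carefully. Since $0$ lies in the resolvent set of $\rH$, there is a square-integrable Weyl solution at $\lambda=0$, and I would identify it as $u_+(t)=\int_t^\cL ds/\mu(s)$ (it is annihilated by $\tau$, vanishes at $\cL$, and is the only candidate square integrable near $\cL$ because $\int^\cL\mu=\infty$); together with the Neumann solution $u_-\equiv1$ it produces the resolvent kernel $G(t,s)=\int_{\max(t,s)}^{\cL} ds'/\mu(s')$. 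This is exactly the Kac--Krein trace formula for a string (cf.\ \cite{kakr58,kakr74}). Integrating the diagonal and applying Fubini,
\[
\tr(\rH^{-1})=\int_0^\cL\Big(\int_t^\cL\frac{ds}{\mu(s)}\Big)\mu(t)\,dt=\sum_{n\ge0}\frac{\ell_n}{s_ns_{n+1}}\sum_{k=0}^{n-1}s_ks_{k+1}\ell_k+\frac12\sum_{n\ge0}\ell_n^2,
\]
i.e.\ the left-hand side of \eqref{eq:KK08} plus $\tfrac12\sum_n\ell_n^2$.

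Finally I would assemble the three contributions. Collecting the coefficients of $\ell_n^2$ arising from $\rH$, $\bH^1$, and $\bH^2$ and invoking the combinatorial identity $1+(s_n-1)(s_{n+1}-1)+(s_n-1)+(s_{n+1}-1)=s_ns_{n+1}$, the combined local terms are comparable to $\sum_{n\ge1}s_ns_{n+1}\ell_n^2$, while the genuinely nonlocal part reproduces precisely the series in \eqref{eq:KK08}. Hence
\[
\sum_{\lambda\in\sigma(\bH)}\frac1\lambda\;\asymp\;\sum_{n\ge0}\frac{\ell_n}{s_ns_{n+1}}\sum_{k=0}^{n-1}s_ks_{k+1}\ell_k\;+\;\sum_{n\ge1}s_ns_{n+1}\ell_n^2,
\]
and finiteness of the left-hand side is equivalent to the simultaneous validity of \eqref{eq:trace01} and \eqref{eq:KK08}, as claimed.
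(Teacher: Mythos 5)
Your proof is correct, and its overall architecture is the same as the paper's: decompose $\bH$ via \eqref{eq:Hdecomp}, sum $1/\lambda$ over the three blocks, and match the total against \eqref{eq:trace01} and \eqref{eq:KK08}. Where you differ is in the tools used for two of the blocks, and the differences are genuine. For $\wt{\rh}_n$ the paper uses Dirichlet--Neumann bracketing at $t_n$ to squeeze $\tr(\wt{\rh}_n^{-1})$ between $\tfrac16(\ell_{n-1}^2+\ell_n^2)$ and $\tfrac12(\ell_{n-1}^2+\ell_n^2)$, whereas you integrate the diagonal of the Green's function; your bound is the same and does check out, since with $p=\ell_{n-1}/(s_{n-1}s_n)$, $q=\ell_n/(s_ns_{n+1})$ one computes $\tr(\wt{\rh}_n^{-1})=\bigl(\ell_{n-1}^2(p/6+q/2)+\ell_n^2(q/6+p/2)\bigr)/(p+q)$. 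For the symmetric part $\rH$ the paper changes variables to the Krein string form and \emph{cites} M.~G.~Krein's trace-class criterion (\cite{kakr58}, \cite[\S 11.10]{kakr74}), which yields \eqref{eq:KK09} directly; you instead rederive this criterion by exhibiting the Weyl solution $u_+(t)=\int_t^\cL ds/\mu(s)$ at $\lambda=0$ and applying a Mercer-type trace identity, which makes the argument self-contained at the cost of two standard facts that you should state explicitly: (a) $u_+\in L^2(\cI_\cL;\mu)$ — true here, e.g.\ because discreteness gives $\cL_\mu<\infty$ and $\sup_x \Phi(x)<\infty$ (with $\Phi$ as in \eqref{eq:cM}), whence $\int_0^\cL u_+^2\mu\,dt\le 2\sup_x\Phi(x)\cdot\cL_\mu$, or by Weyl theory since $0$ lies in the resolvent set and $\cL$ is in the limit point case; and (b) for a nonnegative integral operator with continuous kernel the trace equals the integral of the diagonal. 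Finally, your assembly via $1+(s_n-1)(s_{n+1}-1)+(s_n-1)+(s_{n+1}-1)=s_ns_{n+1}$ is slightly cleaner bookkeeping than the paper's, which first establishes \eqref{eq:trace02B} for the two fibre blocks and then extracts $\{\ell_n\}_{n\ge0}\in\ell^2$ from \eqref{eq:KK09} to upgrade it to \eqref{eq:trace01}; the two routes are equivalent, but yours makes the equivalence $\sum_\lambda 1/\lambda \asymp \eqref{eq:KK08}+\eqref{eq:trace01}$ visible in one line.
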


\begin{proof}
As in the proof of Theorem \ref{th:discr}, observe that the spectrum of $\bH$ consists of three sets of eigenvalues. Let us denote the second and the third summands in \eqref{eq:Hdecomp} by $\bH^1$ and $\bH^2$, respectively. The spectrum of the self-adjoint operator $\rh_n$ is given by \eqref{eq:sigma_hn} and hence
\be\label{eq:trace02}
\sum_{\lambda\in \sigma(\bH^2)} \frac{1}{\lambda} = \sum_{n\ge 1} (s_n-1)(s_{n+1}-1)\sum_{k\ge 1}\frac{\ell_n^2}{\pi^2k^2} = \frac{1}{6}\sum_{n\ge 1} (s_n-1)(s_{n+1}-1)\ell_n^2.
\ee
 Similarly, the spectrum of the self-adjoint operator $\wt{\rh}_n$ also consists of simple positive eigenvalues, however, we are not aware of their closed form. Instead one can employ the standard Dirichlet--Neumann bracketing, that is, to estimate the eigenvalues of $\wt{\rh}_n$ via the eigenvalues of the operators $\wt{\rh}_n^D$ and $\wt{\rh}_n^N$ subject to Dirichlet, respectively, Neumann boundary conditions at $t_n$:
 \[
 \lambda_k(\wt{\rh}_n^N) \le  \lambda_k(\wt{\rh}_n) \le  \lambda_k(\wt{\rh}_n^D)
 \]
 for all $k\ge 1$. Thus, we get
\begin{align*}
\sum_{\lambda\in \sigma(\bH^1)} \frac{1}{\lambda} & \le \sum_{n\ge 1} (s_n-1)\sum_{\lambda\in \sigma(\wt{\rh}_n^N)} \frac{1}{\lambda} \\
&= \sum_{n\ge 1} (s_n-1)\sum_{k\ge 1}\frac{\ell_{n-1}^2}{\pi^2(k-1/2)^2} + \frac{\ell_{n}^2}{\pi^2(k-1/2)^2} \\
&  = \frac{1}{2}\sum_{n\ge 1} (s_n-1)(\ell_{n-1}^2+\ell_{n}^2) \le \frac{1}{2}\sum_{n\ge 0} (s_n+s_{n+1}-2)\ell_{n}^2.
\end{align*}
Using the Dirichlet eigenvalues, one can prove a similar bound from below. Moreover, combining the latter with \eqref{eq:trace01} implies that the resolvents of both $\bH_1$ and $\bH_2$ belong to the trace class exactly when 
\be\label{eq:trace02B}
\sum_{n\ge 1} (s_ns_{n+1}-1)\ell_n^2 <\infty.
\ee

Next observe that $0\in \sigma(\rH)$ exactly when $\id \in L^2(\cI_\cL;\mu)$, which is equivalent to $\vol(\cA)<\infty$. Thus $0$ is not an eigenvalue of $\bH$ whenever $\vol(\cA)=\infty$. Finally, applying M.\ G.\ Krein's theorem to the operator $\rH$ (see \cite{kakr58}, \cite[\S 11.10]{kakr74}), we conclude that $\rH^{-1}$ is trace class if and only if $\cL_\mu<\infty$ and 
\be\label{eq:KK09}
\int_0^\cL \frac{1}{\mu(x)} \int_0^x {\mu(s)}{ds}\, dx <\infty.
\ee
However, using \eqref{eq:muL}, we get
\begin{align*}
\int_0^\cL \frac{1}{\mu(x)} \int_0^x {\mu(s)}{ds}\, dx & = \sum_{n\ge 0} \int_{t_n}^{t_{n+1}} \frac{1}{\mu(x)} \int_0^x {\mu(s)}{ds}\, dx \\
& = \sum_{n\ge 0}\frac{1}{s_ns_{n+1}} \int_{t_n}^{t_{n+1}}  \Big(\sum_{k=0}^{n-1}s_ks_{k+1}\ell_k + s_ns_{n+1}(x-t_n) \Big)\, dx \\
& = \sum_{n\ge 0}\frac{\ell_n}{s_ns_{n+1}} \sum_{k=0}^{n-1}s_ks_{k+1}\ell_k + \frac{1}{2}\sum_{n\ge 0}\ell_n^2.
\end{align*}
Notice that the latter in particular shows that $\{\ell_n\}_{n\ge 0}\in \ell^2$ and combining this fact with \eqref{eq:trace02B} we arrive at \eqref{eq:trace01}. This completes the proof of Theorem \ref{th:trace}.
\end{proof}

\begin{remark}
Using the same arguments and the results from \cite{kac, rowo19} one would be able to characterize radially symmetric antitrees such that the resolvent of the corresponding Kirchhoff Laplacian belongs to the Schatten--von Neumann ideal $\mathfrak{S}_p$, $p\in (1,\infty)$ (and even to  other trace ideals), however, these results look cumbersome and we decided not to include them.
\end{remark}

\section{Spectral gap estimates}\label{sec:specest}
We restrict our discussion to the case $\vol(\cA) = \infty$ for several reasons. Of course, the main one  is the fact that in this case $\bH_0$ is essentially self-adjoint and this simplifies some considerations. However, for finite volume metric graphs the corresponding estimates remain to be true for the Friedrichs extension of $\bH_0$. 

Our next goal is to estimate the bottom of the spectrum of the operator $\bH$. 

\begin{theorem}\label{th:SpEst}
Let $\cA$ be an infinite radially symmetric antitree with $\vol(\cA)=\infty$.   
Then the bottom of the spectrum $\lambda_0(\bH) = \inf \sigma(\bH)$ of $\bH$ is strictly positive if and only 
if the following conditions are satisfied:
\begin{itemize}
\item[{\rm (i)}] $\ell^\ast(\cA) = \sup_{n\ge 0}\ell_n<\infty$,
\item[{\rm (ii)}] $\cL_\mu <\infty$, and
\item[{\rm (iii)}] 
\be\label{eq:KK04}
C(\cL):=\sup_{x\in(0,\cL)}\int_0^x \mu(s)ds\cdot\int_x^{\cL}\frac{ds}{\mu(s)}  <\infty.
\ee
Moreover, we have the following estimate
\be\label{eq:estKK01}
\frac{1}{4C(\cL)} \le \lambda_0(\bH) \le \frac{1}{C(\cL)}.
\ee
\end{itemize}
\end{theorem}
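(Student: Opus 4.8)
The plan is to exploit the decomposition \eqref{eq:Hdecomp} to reduce the positivity of $\lambda_0(\bH)$ to the positivity of the bottoms of the three constituent families of operators. Since $\bH$ is self-adjoint (as $\vol(\cA)=\infty$), we have
\[
\lambda_0(\bH) = \min\big\{\lambda_0(\rH),\ \inf_{n\ge 1}\lambda_0(\rh_n),\ \inf_{n\ge 1}\lambda_0(\wt{\rh}_n)\big\}.
\]
The operators $\rh_n$ and $\wt{\rh}_n$ are regular Sturm--Liouville operators with Dirichlet conditions, so their bottoms are governed by the lengths of the underlying intervals. From \eqref{eq:sigma_hn} we have $\lambda_0(\rh_n)=\pi^2/\ell_n^2$, and from \eqref{eq:sigma_wthn} the smallest Dirichlet eigenvalue $\lambda_0(\wt{\rh}_n)$ is comparable to $(\ell_{n-1}+\ell_n)^{-2}$. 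Hence $\inf_n\lambda_0(\rh_n)>0$ and $\inf_n\lambda_0(\wt{\rh}_n)>0$ hold precisely when the edge lengths stay bounded, i.e.\ under condition (i). This takes care of the two ``trivial'' summands.

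\textbf{The main operator.} The crux is therefore to characterize when $\lambda_0(\rH)>0$. Following the recipe already used in the proof of Theorem \ref{th:discr}, I would transform $\rH$ into Krein string form via the change of variables $x\mapsto g(x)=\int_0^x \mu(s)^{-1}\,ds$, obtaining $\wt\rH$ as in \eqref{eq:wtH}. The bottom of the spectrum of a Krein string is controlled by the Kac--Krein machinery from \cite{kakr58}: the relevant quantity is exactly the quantity
\[
C(\cL)=\sup_{x\in(0,\cL)}\int_0^x \mu(s)\,ds\cdot\int_x^{\cL}\frac{ds}{\mu(s)}
\]
appearing in (iii). The Kac--Krein theory gives the two-sided bound $\frac{1}{4C(\cL)}\le \lambda_0(\rH)\le \frac{1}{C(\cL)}$, which is precisely \eqref{eq:estKK01}, and in particular $\lambda_0(\rH)>0$ if and only if $C(\cL)<\infty$ together with the requirement $\cL_\mu<\infty$ (finiteness of the string length ensures a genuine spectral gap rather than accumulation at zero). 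Thus conditions (ii) and (iii) are exactly the conditions for positivity of $\lambda_0(\rH)$.

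\textbf{Assembling.} Combining the three cases, $\lambda_0(\bH)>0$ holds if and only if (i), (ii) and (iii) all hold. For the quantitative estimate \eqref{eq:estKK01} I would observe that the string bound already yields $\frac{1}{4C(\cL)}\le \lambda_0(\rH)\le \frac{1}{C(\cL)}$ for the $\rH$-summand, and that the contributions from $\rh_n$ and $\wt{\rh}_n$, being of order $\ell_n^{-2}$, are dominated from below by constants large enough not to affect the stated bounds once (i)--(iii) hold; one checks that $C(\cL)$ controls $\ell_n^2$ from above (indeed, as noted after \eqref{eq:cM}, $\Phi((t_n+t_{n+1})/2)\ge \tfrac14\ell_n^2$, so $\ell_n^2\le 4C(\cL)$ and hence $\lambda_0(\rh_n)=\pi^2/\ell_n^2\ge \pi^2/(4C(\cL))\ge 1/(4C(\cL))$, and similarly for $\wt{\rh}_n$). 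Consequently the minimum over all summands is achieved (up to the universal factors) by the string operator $\rH$, and \eqref{eq:estKK01} follows.

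\textbf{Expected obstacle.} The main subtlety is the careful invocation of the Kac--Krein estimates: one must verify that the finiteness of the string length $\cL_\mu$ is genuinely needed in addition to $C(\cL)<\infty$ (otherwise zero may belong to the essential spectrum even when $C(\cL)$ is finite), and one must confirm that the two-sided constants $\tfrac14$ and $1$ transfer intact from the string formulation back to $\rH$ under the unitary equivalence. A secondary point is checking that the lower bounds coming from $\rh_n$ and $\wt{\rh}_n$ do not spoil the clean constant $\tfrac{1}{4C(\cL)}$, which requires the comparison $\ell_n^2\le 4C(\cL)$ just indicated.
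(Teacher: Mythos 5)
Your proposal runs on the same engine as the paper's proof: the decomposition \eqref{eq:Hdecomp} plus the Kac--Krein theory of strings applied to $\rH$ (with the same two-sided bounds producing \eqref{eq:estKK01}). The structural difference is in how the auxiliary summands are dispatched. The paper proves the exact equality $\lambda_0(\bH)=\lambda_0(\rH)$ by a Rayleigh-quotient/domain-monotonicity argument (test functions for $\rh_n$ and $\wt{\rh}_n$, extended by zero, are admissible for $\rH$), so it never needs any spectral information about $\rh_n$ or $\wt{\rh}_n$; you instead keep all summands and compare their bottoms numerically with $1/(4C(\cL))$. That route can be made to work, but note that your citation of \eqref{eq:sigma_wthn} does not justify the claim that $\lambda_0(\wt{\rh}_n)$ is comparable to $(\ell_{n-1}+\ell_n)^{-2}$: \eqref{eq:sigma_wthn} is a Weyl asymptotic for the counting function as $\lambda\to\infty$ and says nothing about the lowest eigenvalue. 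The bound you actually need, $\lambda_0(\wt{\rh}_n)\ge \pi^2/(4\max\{\ell_{n-1},\ell_n\}^2)$, does hold and follows, e.g., from Neumann bracketing at $t_n$ (exactly as in the paper's proof of Theorem \ref{th:trace}) or from Appendix \ref{app:atan2}; combined with your observation $\ell_n^2\le 4C(\cL)$ it indeed gives $\lambda_0(\wt{\rh}_n)\ge \pi^2/(16\,C(\cL))>1/(4C(\cL))$, so the constants survive.

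There is, however, a genuine gap in your necessity argument for (i). The operators $\rh_n$ and $\wt{\rh}_n$ enter \eqref{eq:Hdecomp} with multiplicities $(s_n-1)(s_{n+1}-1)$ and $s_n-1$, which vanish whenever $s_n=1$ or $s_{n+1}=1$; for an edge with $s_n=s_{n+1}=1$ none of the operators $\rh_n$, $\wt{\rh}_n$, $\wt{\rh}_{n+1}$ actually occurs in the decomposition, so positivity of $\lambda_0(\bH)$ imposes no constraint on such $\ell_n$ through the auxiliary summands (in the extreme case of a ray, $\bH=\rH$ and there are no auxiliary summands at all). Hence the implication ``$\lambda_0(\bH)>0\Rightarrow$ (i)'' cannot be obtained the way you state it. The paper avoids this by deriving (i) from (iii): if $\ell_{n_k}\to\infty$, then evaluating the supremum in \eqref{eq:KK04} at the midpoints of the corresponding intervals gives $C(\cL)\ge\Phi\bigl(\tfrac{t_{n_k}+t_{n_k+1}}{2}\bigr)\ge\tfrac14\ell_{n_k}^2\to\infty$, with $\Phi$ as in \eqref{eq:cM}. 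You already possess this implication---it is precisely your inequality $\ell_n^2\le 4C(\cL)$---so the repair is only a logical reorganization: deduce (ii) and (iii) from $\lambda_0(\rH)>0$ (the string summand is always present), and then deduce (i) from (iii). In the same vein, the worry in your last paragraph that (ii) is needed ``in addition to'' (iii) is moot: since $\mu\ge 1$, the finiteness of $C(\cL)$ forces $\int_x^{\cL}\mu(s)^{-1}ds<\infty$ and hence $\cL_\mu<\infty$, so the three conditions are nested, with (iii) implying both (i) and (ii).
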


\begin{proof}
Since $\vol(\cA)=\infty$, the operator $\bH$ is self-adjoint by Theorem \ref{th:SA}. Moreover, by Theorem \ref{th:decomp}, we have
\be
\lambda_0(\bH) = \min\{\lambda_0(\rH), \lambda_0(\bH^1),\lambda_0(\bH^2)\},
\ee
where $\bH^1$ and $\bH^2$ are given by \eqref{eq:summands}. 
Observe that 
\be\label{eq:estbH=rH}
\lambda_0(\bH) = \lambda_0(\rH).
\ee
Indeed, it suffices to compare the domains of $\rH_0$ and $\rh_n$, $\wt{\rh}_n$ and then exploit the Rayleigh quotient. For instance, 
\begin{align*}
\lambda_0(\rH) & = \inf_{\substack{f\in\dom(\rH_0)\\ f\neq0}} \frac{(\rH f,f)_{L^2(\cI_\cL;\mu)}}{\|f\|^2_{L^2(\cI_\cL;\mu)}}  \le \inf_{\substack{f\in\dom(\rH_0)\\ {\rm supp}(f)\subset [t_{n-1},t_{n+1}]}} \frac{(\rH f,f)_{L^2(\cI_\cL;\mu)}}{\|f\|^2_{L^2(\cI_\cL;\mu)}} \\
& \le \inf_{\substack{f\in\dom(\wt{\rh}_n)\\ f\neq 0}} \frac{(\wt{\rh}_n f,f)_{L^2(I_{n-1} \cup I_{n};\mu)}}{\|f\|^2_{L^2(I_{n-1}\cup I_{n};\mu)}} = \lambda_0(\wt{\rh}_n).
\end{align*}
 
 The operator $\rH$ can be studied in the framework of Krein strings, however, we need to apply the Kac--Krein criteria \cite{kakr58} to the {\em dual string} since both Corollary 1.1 and Remark 2.2 in \cite{kakr58} are stated subject to the Dirichlet boundary condition at $x=0$. For a detailed discussion of dual strings we refer to \cite[\S 12]{kakr74} and the desired connection is \cite[equality (12.6)]{kakr74}
 \footnote{This statement can be seen as the analog of the {\em abstract commutation}: for a closed operator $A$ acting in a Hilbert space $\gH$, the operators $(A^\ast A)\upharpoonright_{\ker(A)^\perp} $ and $(AA^\ast) \upharpoonright_{\ker(A^\ast )^\perp} $ are unitarily equivalent.}. 
 More precisely, assuming that $\cL_\mu<\infty$ and then applying Theorem 1 from \cite{kakr58}, we get the estimate
\be\label{eq:kk01}
x\left( M^{-1}(\infty) - M^{-1}(x)\right) \le \frac{1}{\lambda_0(\rH)},
\ee
which holds for all $x>0$. Here $M^{-1}$ denotes the inverse to the function $M\colon [0,\cL_\mu)\to [0,\infty)$ defined by (see also \eqref{eq:mu_g} and \eqref{eq:g_def})
\be\label{eq:defM}
M(x):= \int_0^x \wt{\mu}(s)ds = \int_0^x (\mu^2\circ g^{-1})(s)ds = \int_0^{g^{-1}(x)} \mu(s)ds.
\ee
Notice that $M$ is a strictly increasing absolutely continuous function mapping $[0,\cL_\mu)$ onto $[0,\infty)$ (the latter follows from the assumption $\vol(\cA)=\infty$).  Thus \eqref{eq:kk01} is equivalent to 
\be
M(x)\left( \cL_\mu - x\right) \le \frac{1}{\lambda_0(\rH)},\qquad x\in (0,\cL_\mu).
\ee
By changing variables, we end up with the following estimate
\be
\sup_{x\in(0,\cL)}\int_0^x\mu(s)ds\cdot\int_x^{\cL}\frac{ds}{\mu(s)} \le \frac{1}{\lambda_0(\rH)}.
\ee
 Applying Theorem 3 from \cite{kakr58} and using the same arguments, we end up with the lower bound
\be
 \frac{1}{4\lambda_0(\rH)} \le\sup_{x\in (0,\cL)} \int_0^x\mu(s)ds\cdot\int_x^{\cL}\frac{ds}{\mu(s)}.
\ee
Taking into account \cite[Remark 2.2]{kakr58}, we conclude that the condition $\cL_\mu<\infty$ is also necessary for the positivity of $\lambda_0(\rH)$. It remains to note that the necessity of (i) follows from (iii). Indeed, assuming the converse, that is, there is a sequence of lengths $\ell_{n_k}$ tending to infinity, and then choosing $x_{n_k}$ as the middle points of the corresponding intervals, one immediately concludes that $C(\cL) = \infty$ by evaluating \eqref{eq:KK04} at $x_{n_k}$.
\end{proof}

\begin{remark}\label{rem:SpEst01}
Arguing as in the proof of Theorem \ref{th:discr} one can show that conditions (i)--(iii) in Theorem \ref{th:SpEst} can be replaced by the single condition
\be\label{eq:Iso01}
\sup_{n\ge 0}\, \sum_{k=0}^n s_ks_{k+1}\ell_k \sum_{k\ge n} \frac{\ell_k}{s_ks_{k+1}} <\infty.
\ee
However, this expression provides only an upper bound on $C(\cL)$:
\be\label{eq:Iso02}
\sup_{n\ge 0}\,\sum_{k=0}^{n} s_ks_{k+1}\ell_k \sum_{k\ge n+1} \frac{\ell_k}{s_ks_{k+1}} \le C(\cL) \le \sup_{n\ge 0}\,\sum_{k=0}^n s_ks_{k+1}\ell_k \sum_{k\ge n} \frac{\ell_k}{s_ks_{k+1}}.
\ee
\end{remark}

Since $0$ is not an eigenvalue of $\bH$ if $\vol(\cA)=\infty$, $\lambda_0(\bH)>0$ is equivalent to $\lambda_0^{\ess}(\bH)>0$, where $\lambda_0^{\ess}(\bH)$ denotes the bottom of the essential spectrum of $\bH$, $ \lambda_0^{\ess}(\bH): = \inf \sigma_{\ess}(\bH)$. Thus Theorem \ref{th:SpEst} also provides a criterion for $\lambda_0^{\ess}(\bH)$ to be strictly positive. Moreover, by employing  Glazman's decomposition principle one can prove a similar to \eqref{eq:KK04} bound on $\lambda_0^{\ess}(\bH)$.

\begin{theorem}\label{th:SpEstEss}
Let $\cA$ be an infinite radially symmetric antitree with $\vol(\cA)=\infty$.   
Then $\lambda_0^{\ess}(\bH)>0$  if and only if \eqref{eq:Iso01} holds true. Moreover, 
\be\label{eq:estKK02}
\frac{1}{4C_{\ess}(\cL)} \le \lambda_0^{\ess}(\bH) \le \frac{1}{C_{\ess}(\cL)},
\ee
where the constant $C_{\ess}(\cL)$ is given by
\be
C_{\ess}(\cL) = \lim_{x\to \cL}\sup_{y\in(x,\cL)}\int_x^y \mu(s)ds\cdot\int_y^{\cL}\frac{ds}{\mu(s)}.
\ee
\end{theorem}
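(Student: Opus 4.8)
The plan is to reduce, exactly as in the proofs of Theorems \ref{th:SA} and \ref{th:SpEst}, the computation of $\lambda_0^{\ess}(\bH)$ to that of $\lambda_0^{\ess}(\rH)$, and then to quote the Kac--Krein machinery of \cite{kakr58, kakr74} for the bottom of the essential spectrum of a string. First I would note that, since $\vol(\cA)=\infty$, the operator $\bH$ is self-adjoint and $0$ is not an eigenvalue, so as observed in the paragraph preceding the theorem we have $\lambda_0^{\ess}(\bH)>0 \iff \lambda_0(\bH)>0$ only modulo the zero mode; more precisely I must show $\lambda_0^{\ess}(\bH)=\lambda_0^{\ess}(\rH)$. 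To this end I would invoke the decomposition \eqref{eq:Hdecomp} together with Weyl's theorem on the invariance of the essential spectrum under compactly supported perturbations: the summands $\rh_n$ and $\wt{\rh}_n$ contribute only through the closures of $\cup_{n\ge1}\sigma(\rh_n)$ and $\cup_{n\ge1}\sigma(\wt{\rh}_n)$, and by \eqref{eq:sigma_hn}, \eqref{eq:sigma_wthn} their lowest eigenvalues are bounded below by $\pi^2/\ell_n^2$-type quantities. Since, as in the proof of Theorem \ref{th:SpEst}, condition \eqref{eq:Iso01} forces $\ell_n$ to stay bounded (indeed the displayed lower bound there is $\ge\frac14\ell_n^2$), these two families cannot push the essential spectrum down to $0$, so the bottom of $\sigma_{\ess}(\bH)$ is governed by $\rH$ alone.

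Next I would transport the problem to the dual string, repeating verbatim the change of variables $x\mapsto g(x)=\int_0^x ds/\mu(s)$ from the proof of Theorem \ref{th:SpEst}, so that $\rH$ becomes the string operator $\wt{\rH}$ on $L^2([0,\cL_\mu);\wt\mu)$ with $\wt\mu=\mu^2\circ g^{-1}$ and Neumann condition at $0$. The essential-spectrum analogue of the Kac--Krein positivity criterion (again working with the dual string to accommodate the boundary condition at the origin) characterizes $\lambda_0^{\ess}(\wt{\rH})>0$ through the \emph{tail} version of the quantity $C(\cL)$, namely
\[
C_{\ess}(\cL)=\lim_{x\to\cL}\sup_{y\in(x,\cL)}\int_x^y\mu(s)\,ds\cdot\int_y^{\cL}\frac{ds}{\mu(s)},
\]
and yields the two-sided estimate \eqref{eq:estKK02} with the same constants $\tfrac14$ and $1$ as in \eqref{eq:estKK01}. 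The bridge from the abstract string statement to the geometric criterion \eqref{eq:Iso01} is then the same computation as in Remark \ref{rem:SpEst01}: one sandwiches $C_{\ess}(\cL)$ between the two tail sums obtained by evaluating the integrals at the partition points $t_n$, so that $C_{\ess}(\cL)<\infty$ is equivalent to the finiteness (in the $\limsup$-as-$n\to\infty$ sense) of $\sum_{k=0}^n s_ks_{k+1}\ell_k\sum_{k\ge n}\ell_k/(s_ks_{k+1})$, which is \eqref{eq:Iso01}.

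The main technical obstacle is the localization step that makes $\lambda_0^{\ess}(\bH)=\lambda_0^{\ess}(\rH)$ precise, i.e. Glazman's decomposition principle as invoked in the statement. Here one removes a compact part of the graph near the root (equivalently restricts $\rH$ to a neighborhood of the singular endpoint $\cL$) by imposing an auxiliary boundary condition; the essential spectrum is unchanged, and on the truncated string the relevant Kac--Krein quantity is precisely the $\limsup$-as-$x\to\cL$ of $\Phi(x)$ from \eqref{eq:cM}, which is $C_{\ess}(\cL)$. I would have to check that the restriction of the finite-rank families $\bH^1,\bH^2$ to the complement of this neighborhood is bounded below uniformly, which again rests on the boundedness of $\{\ell_n\}$ extracted from \eqref{eq:Iso01}. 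Apart from this localization, the argument is a tail-version of Theorem \ref{th:SpEst}, and the necessity of condition (i) (that $\sup_n\ell_n<\infty$) is again implied by \eqref{eq:Iso01} exactly as before.
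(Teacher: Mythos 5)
Your proposal is correct and follows essentially the route the paper itself intends: the paper gives no detailed proof of Theorem \ref{th:SpEstEss}, only the observation in the preceding paragraph (the ``if and only if'' part follows from Theorem \ref{th:SpEst} and Remark \ref{rem:SpEst01} because $0$ is not an eigenvalue when $\vol(\cA)=\infty$) together with the hint that Glazman's decomposition principle applied to the dual string yields the tail version \eqref{eq:estKK02} of the Kac--Krein bounds, which is exactly what you carry out. Your handling of the summands $\bH^1,\bH^2$ (they are infinite orthogonal sums of operators with discrete spectrum, not finite-rank families, but your quantitative point stands: $C_{\ess}(\cL)\ge\tfrac14\limsup_n\ell_n^2$ keeps their essential spectra above $\tfrac{1}{4C_{\ess}(\cL)}$) supplies precisely the localization detail the paper leaves implicit.
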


A few remarks are in order.

\begin{remark}\label{rem:SpEst03}
\begin{itemize}
\item[(i)] Notice that the equality $C_{\ess}(\cL) = 0$ implies Theorem \ref{th:discr}.
\item[(ii)] One can prove Theorem \ref{th:SpEst} avoiding the use of the Kac--Krein results \cite{kakr58}. Namely, with the help of the Rayleigh quotient, one can rewrite the inequality $\lambda_0(\rH)>0$ as a variational problem and then apply Muckenhoupt's inequalities (see, e.g., \cite[\S 1.3.1]{maz}, \cite{muc}). In particular, M.\ Solomyak employed this approach in the study of quantum graph operators on radially symmetric trees (see \cite[\S 5]{sol04}). 
\item[(iii)] It is interesting to compare Theorems \ref{th:SpEst} and \ref{th:SpEstEss} with volume growth estimates (cf. \cite{stu}). For instance, by \cite[Theorem 7.1]{kn19}, 
\be\label{eq:volgr}
\lambda_0(\bH) \le \lambda_0^{\ess}(\bH) \le \frac{1}{4}{\rm v}(\cA)^2,
\ee
where
\be
{\rm v}(\cA):=\liminf_{n\to \infty} \frac{1}{\sum_{k=0}^n \ell_k}\log\Big(\sum_{k=0}^n s_ks_{k+1}\ell_k\Big).
\ee
However, this result applies only if $\cL=\sum_{n\ge 0} \ell_n=\infty$. 
\end{itemize}
\end{remark}

\section{Isoperimetric constant}\label{app:iso}

Recall that the {\em isoperimetric constant} $\alpha (\cG)$ of a metric graph $\cG$ is  (see \cite[\S 3]{kn19})
\begin{equation} \label{eq:defalpha}
	\alpha(\cG) := \inf_{\wt \cG} \frac{\deg_{\cG} (\partial \wt \cG)}{\vol(\wt \cG)},
\end{equation}
where the infimum is taken over all finite connected subgraphs $\wt \cG = (\wt \cV, \wt \cE)$. Here
\begin{align*}
	\partial \wt \cG = \{ v \in \wt \cV | \; \deg_{\wt \cG} (v) < \deg_{\cG} (v) \},
\end{align*}
is the {\em boundary} of $\wt \cG$ and
\begin{align}
	\deg_{\cG} (\partial \wt \cG) & := \sum_{v \in \partial \wt \cG} \deg_{\wt \cG} (v), & 	\vol(\wt \cG) & := \sum_{e \in \wt \cE} |e|.
\end{align}

Computation of the isoperimetric constant is known to be an NP-hard problem, however, due to the presence of symmetries, we are able to find $\alpha(\cA)$ for radially symmetric antitrees.

\begin{theorem} \label{th:alpha}
The isoperimetric constant of a radially symmetric antitree  $\cA$ is 
\begin{equation} \label{eq:alpha}
	{\alpha(\cA)} = \inf_{n \ge 0} \frac{s_n s_{n+1}}{ \sum_{k =0}^n s_k s_{k+1} \ell_k}.
\end{equation}
\end{theorem}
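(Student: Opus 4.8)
The plan is to prove the two inequalities $\alpha(\cA)\le R$ and $\alpha(\cA)\ge R$, where $R:=\inf_{n\ge0}\frac{s_ns_{n+1}}{V_n}$ denotes the right-hand side of \eqref{eq:alpha} and I abbreviate $V_n:=\sum_{k=0}^n s_ks_{k+1}\ell_k=\vol(B_{n+1})$.

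\textbf{Upper bound.} For $\alpha(\cA)\le R$ I would simply test the definition \eqref{eq:defalpha} against the balls $B_N$, i.e.\ the radially symmetric subgraphs consisting of all spheres $S_0,\dots,S_N$ together with all edges joining consecutive ones. Every vertex of $S_k$ with $k\le N-1$ retains its full degree inside $B_N$ (all of $S_{k\pm1}$ is present), so $\partial B_N=S_N$, and each such boundary vertex has degree $s_{N-1}$ in $B_N$; hence $\deg_\cG(\partial B_N)=s_{N-1}s_N$ while $\vol(B_N)=V_{N-1}$. Taking the infimum of $\frac{s_{N-1}s_N}{V_{N-1}}$ over $N\ge1$ and reindexing $n=N-1$ reproduces $R$ exactly, giving $\alpha(\cA)\le R$.

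\textbf{Lower bound.} This is the substantive direction: I must show $\deg_\cG(\partial\wt\cG)\ge R\,\vol(\wt\cG)$ for every finite connected $\wt\cG$. First I would record the structural facts forced by the antitree geometry. Since each edge joins adjacent spheres, connectivity forces the occupied sphere-indices to form an interval $\{m,\dots,N\}$ with at least one edge between each consecutive pair; writing $p_n$ for the number of edges of $\wt\cG$ between $S_n$ and $S_{n+1}$, one has $p_n\ge1$ and $\vol(\wt\cG)=\sum_{n=m}^{N-1}p_n\ell_n$. Next, by the complete-bipartite coupling between consecutive spheres, a vertex is \emph{interior} (not in $\partial\wt\cG$) only if both neighbouring spheres lie entirely in $\wt\cG$ with all incident edges present; writing $q_n$ for the number of interior vertices in $S_n$, this gives the edge budgets $q_ns_{n-1}\le p_{n-1}$, $q_ns_{n+1}\le p_n$ together with the identity $\deg_\cG(\partial\wt\cG)=2\sum_n p_n-\sum_n q_n(s_{n-1}+s_{n+1})$. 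The plan from here is a \emph{radial rearrangement}: replace $\wt\cG$ by the radially symmetric subgraph with the same occupation numbers $a_n=|\wt\cV\cap S_n|$ but all edges between consecutive occupied vertex-sets present, then fill the spheres and adjoin the lower spheres down to the root, verifying at each step that the quotient $\deg_\cG(\partial\,\cdot)/\vol(\cdot)$ does not increase. A subgraph avoiding the root reduces to a complete annulus, whose ratio $\frac{s_ms_{m+1}+s_{N-1}s_N}{\sum_{n=m}^{N-1}s_ns_{n+1}\ell_n}\ge\frac{s_{N-1}s_N}{V_{N-1}}$ already exceeds a ball ratio (the numerator is $\ge s_{N-1}s_N$ and the denominator is $\le V_{N-1}$); a subgraph containing the root reduces to a ball $B_N$ itself. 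Conceptually this is the statement that, by radial symmetry, the whole problem collapses onto the half-line: $R$ is the one-dimensional weighted Cheeger constant $\inf_t \mu(t)/\!\int_0^t\mu(s)\,ds$ of the Krein string with weight $\mu$ from \eqref{eq:muL}, whose infimum is attained as $t\to t_{n+1}^-$ and equals $\inf_n s_ns_{n+1}/V_n=R$.

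\textbf{Main obstacle.} The hard point is precisely the monotonicity of the ratio under these rearrangement steps, i.e.\ controlling the boundary degree $\deg_\cG(\partial\wt\cG)$ for irregular subgraphs. One cannot treat $\vol$ and $\deg_\cG(\partial\,\cdot)$ separately: a single edge between $S_1$ and $S_2$ has boundary degree $2$, far below the $s_1s_2$ of the corresponding ball, yet its ratio $2/\ell_1$ is still $\ge R$, so both quantities may drop simultaneously and only their quotient is comparable. The delicate step is therefore to show that completing edges and spheres — which enlarges $\vol$ but may also enlarge the boundary — never increases the quotient. This is where the full force of the complete-bipartite structure (every interior vertex demands complete neighbouring spheres, encoded in the budgets $q_ns_{n-1}\le p_{n-1}$ and $q_ns_{n+1}\le p_n$) must be combined with the elementary inequality $s_ns_{n+1}\ge R\,V_n$, i.e.\ with the defining property of $R$ itself, to close the joint comparison.
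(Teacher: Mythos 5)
Your upper bound is complete and coincides with the paper's argument: testing \eqref{eq:defalpha} against the balls $\cA_n$ (all edges between the root and $S_{n+1}$) gives exactly $\alpha(\cA)\le \inf_n s_ns_{n+1}/\sum_{k\le n}s_ks_{k+1}\ell_k$. The structural bookkeeping you set up for the lower bound is also correct: the identity $\deg_{\cG}(\partial\wt\cG)=2\sum_n p_n-\sum_n q_n(s_{n-1}+s_{n+1})$, the budgets $q_ns_{n-1}\le p_{n-1}$, $q_ns_{n+1}\le p_n$ forced by the complete bipartite coupling, and the comparison of the annulus and ball ratios with $R$ are all fine.

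However, the lower bound as written is a plan, not a proof, and the gap you yourself flag under ``Main obstacle'' is precisely the content of the theorem. You assert that one can pass from an arbitrary connected $\wt\cG$ to a radially symmetric subgraph, then to a complete annulus or ball, ``verifying at each step that the quotient does not increase'' --- but no such verification is given, and it is not even clear that your specific one-shot rearrangement (keep the occupation numbers $a_n$, insert all edges between occupied vertex sets) enjoys this monotonicity: completing edges can create new interior vertices and simultaneously inflate both $\vol$ and $\deg_\cG(\partial\,\cdot)$ in ways that are not obviously ordered. The paper resolves exactly this point differently: it runs an induction over the net $\{\cA_n\}_{n\ge0}$, first reducing (via a splitting into star-like subgraphs and a mediant inequality) to subgraphs in which every edge meets an interior vertex, and then comparing $\wt\cA$ with \emph{two} graphs at once --- the enlarged graph $\wt\cA'$ obtained by adding all edges from $S_n$ to $S_{n\pm1}$, and the trimmed graph $\wt\cA''\subseteq\cA_{n-1}$ covered by the induction hypothesis. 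The exact volume and degree increments \eqref{eq:vol'}--\eqref{eq:deg''} show that if the ratio of $\wt\cA$ were strictly below both that of $\wt\cA'$ and of $\wt\cA''$, the same quantity $\bigl(s_{n+1}+2N-s_{n-1}\bigr)/\bigl(s_{n+1}\ell_n+N\ell_{n-1}\bigr)$ would have to be simultaneously larger and smaller than $\deg(\partial\wt\cA)/\vol(\wt\cA)$, a contradiction; this two-sided mediant argument is the engine that makes the rearrangement legitimate, and nothing in your proposal substitutes for it. Until you supply a proof of the monotonicity claim (or an alternative such as the paper's induction), the lower bound --- and hence the theorem --- remains unproved.
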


\begin{proof}
The decomposition obtained in Theorem \ref{th:decomp} suggests to take the infimum in \eqref{eq:defalpha} only over radially symmetric subgraphs. Namely, choosing $\cA_n $ for every $n\ge 0$ as the subgraph consisting of all edges between the root $o$ and the combinatorial sphere $S_{n+1}$, we have $\partial  \cA_n = S_{n+1}$ and $\deg_{\cA_n} (v) = s_n$ for all vertices $v \in S_{n+1}$. Hence by \eqref{eq:defalpha} we get
	\be\label{eq:alpha_upper}
		\alpha(\cA) \le \frac{\deg ( \partial \cA_n) }{\vol(\cA_n)} = \frac{s_n s_{n+1}}{\sum_{k \le n} s_k s_{k+1} \ell_k}.
	\ee
Thus it remains to show that indeed it suffices to restrict the infimum in \eqref{eq:defalpha} to the family $\{\cA_n\}_{n\ge0}$. Observe that $\{\cA_n\}_{n\ge 0}$ is a net, that is, for every finite connected subgraph $\wt\cA$ of $\cA$ there is $n\ge 0$ such that $\wt\cA$ is a subgraph of $\cA_n$. Hence we will proceed by induction in $n$. 

Let us start with subgraphs $\wt \cA\subsetneq\cA_0$. Then $\wt \cA$ consists of $m < s_0s_1 $ edges of $\cE_0^+$ and $\vol(\wt \cA) = m \ell_0$. Moreover, for all vertices of $\wt\cA$, $\deg_{\wt\cA}(v) <\deg_\cA(v)$ and hence $\deg(\partial{\wt \cA}) = 2m$, which implies
	\[
		\frac{ \deg( \partial \wt  \cA)}{\vol (\wt \cA)} =  \frac{2m}{m\ell_0} = \frac{2}{ \ell_0} > \frac{ \deg( \partial\cA_0)}{\vol (\cA_0)} = \frac{1}{\ell_0}.
	\]
	
Take $n\ge 1$ and assume that 
\be\label{eq:iso_n}
\frac{ \deg( \partial \wt  \cA)}{\vol (\wt \cA)} \ge \inf_{k\le n-1}\frac{ \deg( \partial\cA_k)}{\vol (\cA_k)} = \inf_{k\le n-1}\frac{s_k s_{k+1}}{\sum_{j \le k} s_j s_{j+1} \ell_j}
\ee
holds for all connected subgraphs $\wt\cA\subseteq \cA_{n-1}$. Take now a connected subgraph $\wt\cA \subseteq \cA_{n}$ such that $\wt\cA \not\subseteq \cA_{n-1}$. The latter in particular implies that $\cV(\wt\cA)\cap S_{n} \neq \emptyset$ and $\cV(\wt\cA)\cap S_{n+1} \neq \emptyset$. We can also assume that $\cV(\wt\cA)\cap S_{n-1} \neq \emptyset$ since otherwise 
$\cE(\wt\cA)\subseteq \cE_n^+$ and hence in this case
\be\label{eq:iso_ln}
\frac{ \deg( \partial \wt  \cA)}{\vol (\wt \cA)} = \frac{2}{\ell_n} > \frac{s_n s_{n+1}}{\sum_{k \le n} s_k s_{k+1} \ell_k} = \frac{\deg ( \partial \cA_n) }{\vol(\cA_n)}. 
\ee
Let us first show that without loss of generality we can take $\wt{\cA}$ such that each edge $e \in \cE(\wt\cA)$ contains at least one vertex in $\cV_{\rm int}(\wt\cA):=\cV(\wt{\cA})\setminus\partial \wt{\cA}$. 
Indeed, if not, consider the induced subgraph $\wt\cA_{\rm int}$, which we can split into a finite disjoint union of connected subgraphs $\{\wt{\cA}_j\}$. In particular, $ \wt \cV_{\rm int} = \cup_{j} \cV(\wt{\cA}_j)$. Let $ \cG_j$ be the star-like subgraphs of $\cA$ with edge sets $\cE(\cG_j) = \cup_{v \in \cV(\wt{\cA}_j)} \cE_v$. By construction, $\cG_j \subseteq \cA_n$ and each edge of $ \cG_j$ contains a vertex from $\cV(\cG_j)\setminus\partial\cG_j = \cV(\wt{\cA}_j)$. Moreover, let $\cE_r = \cE(\wt{\cA}) \setminus \cup_{j} \cE(\cG_j)$ be the remaining edges of $\wt{\cA}$. Then 
it is straightforward to verify (see also \cite[proof of Lemma 3.5]{n18}) that
	\begin{align*}
		\frac{ \deg(\partial \wt{\cA} )}{\vol(\wt \cA)} = \frac { \sum_j \deg(\partial \cG_j) + 2 \# \cE_r } { \sum_j \vol(\cG_j)  + \sum_{e \in  \cE_r} |e|} \geq \min_{j,e\in\cE_r} \left 	\{  \frac{ \deg(\partial \cG_j )}{ \vol(\cG_j)}, \frac{2}{ | e |} \right \}.
	\end{align*}
Taking into account \eqref{eq:iso_ln}, this proves the claim.

Consider a new graph $\wt{\cA}'$ obtained from $\wt\cA$ by adding all possible edges connecting $S_n$ with $S_{n-1}$ and $S_{n+1}$ such that the new graph $\wt{\cA}'$ is connected. By construction, $\wt{\cA}' \subseteq \cA_n$. Moreover, $S_{n+1}\subseteq \partial \wt{\cA}'$ and $\deg_{\wt{\cA}'}(v) = s_n$ for all $v\in S_{n+1}$. Hence
\[
\frac{ \deg( \partial \wt{\cA}')}{\vol (\wt{\cA}')} \ge \frac{s_ns_{n+1}}{\vol(\cA_n)}=\frac{\deg ( \partial \cA_n) }{\vol(\cA_n)}.
\]
We also need another subgraph $\wt{\cA}''$ of $\wt{\cA}$ obtained by removing the edges of $\wt{\cA}$ connecting $S_{n+1}$ with $S_{n}\setminus\partial\wt{\cA}$ and also $S_{n}\setminus\partial\wt{\cA}$ with the vertices in $S_{n-1}\cap \partial\wt{\cA}$. The obtained graph $\wt{\cA}''$ is a connected subgraph of $\cA_{n-1}$ and hence satisfies the induction hypothesis \eqref{eq:iso_n}. Our aim is to show that
\be\label{eq:iso_prime}
\frac{ \deg( \partial \wt{\cA})}{\vol (\wt{\cA})} \ge \min\Big\{\frac{ \deg( \partial \wt{\cA}')}{\vol (\wt{\cA}')},\frac{ \deg( \partial \wt{\cA}'')}{\vol (\wt{\cA}'')} \Big\},
\ee
Denoting $M := \#(S_n\cap \cV(\wt\cA))$ and $N:= \#(S_{n-1}\cap\partial\wt{\cA})$, we get
\be\label{eq:vol'}
\vol (\wt{\cA}') = \vol (\wt \cA) + (s_{n} - M) s_{n+1}  \ell_n  + (s_{n} - M)N \ell_{n-1},
\ee
and 
\be\label{eq:vol''}
\vol( \wt{\cA}'') = \vol( \wt{\cA}) - M s_{n+1} \ell_n - M N \ell_{n-1}.
\ee
Moreover, a careful inspection shows that 
\be\label{eq:deg'}
\deg (\partial \wt{\cA}') \le \deg(\partial \wt \cA) + (s_n - M) (s_{n+1}-s_{n-1} +  2N),
\ee
and
\be\label{eq:deg''}
\deg( \partial \wt \cA) = \deg( \partial \wt{\cA}'') + M (s_{n+1}-s_{n-1} + 2N).
\ee
Now observe that if \eqref{eq:iso_prime} fails to hold, then \eqref{eq:vol''} and \eqref{eq:deg''} would imply 
\be\label{eq:alphaest2} 
\frac{s_{n+1} + 2N - s_{n-1}}{s_{n+1} \ell_n + N \ell_{n-1}} < \frac{\deg(\partial \wt{\cA})}{\vol ( \wt{\cA})},
\ee
and, moroever, \eqref{eq:vol'} and \eqref{eq:deg'} lead to 
\be
\frac{s_{n+1} +  2N - s_{n-1}}{ s_{n+1}  \ell_n  +N\ell_{n-1}} > \frac{\deg(\partial \wt{\cA})}{\vol ( \wt{\cA})}.
\ee	 
This contradiction proves \eqref{eq:iso_prime} and hence finishes the proof of \eqref{eq:alpha}.
\end{proof}

\begin{remark}\label{rem:cheeger}
A few remarks are in order.
\begin{itemize}
\item[(i)] By the Cheeger-type estimate \cite[Theorem 3.4]{kn19}, we have
\be\label{eq:cheeger}
\lambda_0(\bH) \ge \frac{1}{4}\alpha(\cA)^2. 
\ee
Comparing \eqref{eq:cheeger} and \eqref{eq:alpha} with \eqref{eq:estKK01} and \eqref{eq:Iso02}, we conclude that positivity of the isoperimetric constant is indeed only sufficient for $\lambda_0(\bH)>0$. For example, $\alpha(\cA) = 0$ whenever $\vol(\cA) = \infty$ and $\{s_ns_{n+1}\}_{n\ge 0}$ has a bounded subsequence. 
\item[(ii)] The isoperimetric constant $\alpha(\cA)$ measures the ratio of the number of boundary points of $\cA_n$ to the volume of $\cA_n$ and thus provides a lower bound for $\lambda_0(\bH)$. The volume growth estimate \eqref{eq:volgr} provides an upper bound by relating the exponential growth of the volume of $\cA_n$ with respect to its diameter. Notice that the volume of the subgraph $\cA_n$ also appears in \eqref{eq:Iso01}--\eqref{eq:Iso02}. The meaning of the other quantity in \eqref{eq:Iso02}, namely, of $\sum_{k\ge n} \frac{\ell_k}{s_ks_{k+1}}$, which however provides two-sided estimates, remains unclear to us. 
\end{itemize}
\end{remark}

\section{Singular spectrum}\label{sec:SCspec}

Using the isometric isomorphism $U_\mu\colon f\mapsto \sqrt{\mu}f$ between Hilbert spaces $L^2(\cI_\cL;\mu)$ and $L^2(\cI_\cL)$, it is straightforward to check that the pre-minimal operator $\rH_0$ defined in Section \ref{ss:3.1} is unitarily equivalent to the operator $\wt{\rH}_0$ defined in $L^2(\cI_\cL)$ by
\begin{align}
\wt{\rH}_0 f &= -f'',\quad f\in \dom(\wt{\rH}_0) = U_\mu(\dom(\rH_0))\\
 \dom(\wt{\rH}_0) &= \Big\{f\in L^2_c([0,\cL))|\, \frac{1}{\sqrt{\mu}}f, \sqrt{\mu}f'\in AC_{\loc}(\cI_\cL),\ f'(0)=0,\ f''\in L^2(\cI_\cL)\Big\}. \nonumber
\end{align}
Since $\mu$ is piece-wise constant on $(0,\cL)$, the domain of $\wt{\rH}_0$ consists of compactly supported functions $f\in L^2_c(\cI_\cL)$ such that $f\in H^2(I_n)$ for all $n\ge 0$ and also satisfying the following boundary conditions
\[
f'(0)=0;\qquad f(t_n+) = \sqrt{\frac{s_{n+1}}{s_{n-1}}}f(t_n-),\quad f'(t_n+) = \sqrt{\frac{s_{n-1}}{s_{n+1}}}f'(t_n-),
\]
for all $n\ge 1$. Denote the closure of $\wt{\rH}_0$ by $\wt\rH$. The operator $\wt\rH$ has actively been studied  since its spectral properties play a crucial role in understanding spectral properties of Kirchhoff Laplacians on radial metric trees (let us only mention \cite{bf,ess}). It  turns out that one can immediately apply most of the results from \cite{bf} and \cite{ess} in order to prove the corresponding spectral properties of Kirchhoff Laplacians on radially symmetric antitrees. However, we need the following assumptions on the geometry of metric antitrees:

\begin{hypothesis}\label{hyp:remling}
There is a positive lower bound on the edge lengths, $\ell_\ast(\cA):=\inf_{n\ge 0} \ell_n >0$, and sphere numbers are such that 
\be\label{hyp:remling2}
\liminf_{n\ge 0}\frac{s_{n+2}}{s_n}>1. 
\ee
\end{hypothesis}

In this case clearly $\cL = \sum_{n\ge 0} \ell_n = \infty$ and hence both operators $\rH$ and $\wt{\rH}$ are self-adjoint. 
The next result is the analog of \cite[Theorem 2]{bf}.

\begin{theorem}\label{th:rem01}
Assume Hypothesis \ref{hyp:remling}. If in addition
\be\label{eq:sol}
\sup_{n\ge 0} \ell_n = \infty,
\ee
then $\sigma(\bH) = \R_{\ge 0}$ and $\sigma_{\ac}(\bH)=\emptyset$.
\end{theorem}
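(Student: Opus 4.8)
The plan is to exploit the decomposition of Theorem~\ref{th:decomp}, to observe that the only summand able to contribute absolutely continuous spectrum is $\rH$, and then to import the conclusion for $\rH$ (equivalently $\wt\rH$) from \cite{bf}.

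First I would reduce the absolutely continuous part. By Theorem~\ref{th:decomp}, $\bH$ is unitarily equivalent to the orthogonal sum \eqref{eq:Hdecomp}. Each $\rh_n$ and each $\wt\rh_n$ is a \emph{regular} Sturm--Liouville operator on a compact interval and therefore has purely discrete spectrum; hence the two infinite sums $\bH^1$ and $\bH^2$ from \eqref{eq:summands} carry no absolutely continuous spectrum, $\sigma_{\ac}(\bH^1)=\sigma_{\ac}(\bH^2)=\emptyset$. Consequently $\sigma_{\ac}(\bH)=\sigma_{\ac}(\rH)$, and via the isometry $U_\mu\colon f\mapsto\sqrt\mu\,f$ recalled at the beginning of this section, $\sigma_{\ac}(\bH)=\sigma_{\ac}(\wt\rH)$.

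Next I would check that $\wt\rH$ falls within the framework of \cite{bf}. Since $\cL=\sum_{n\ge0}\ell_n=\infty$, the operator $\wt\rH$ acts as $-d^2/dx^2$ on $L^2([0,\infty))$ with the point matching conditions at $\{t_n\}$ determined by the factors $\gamma_n:=\sqrt{s_{n+1}/s_{n-1}}$. I would then verify the hypotheses of \cite[Theorem~2]{bf}: (a) $\inf_{n\ge0}\ell_n=\ell_\ast(\cA)>0$ by Hypothesis~\ref{hyp:remling}; (b) the couplings stay bounded away from the trivial value, $\liminf_{n}\gamma_n>1$, which holds because $\liminf_n\gamma_n^2=\liminf_n s_{n+1}/s_{n-1}=\liminf_n s_{n+2}/s_n>1$ after reindexing in \eqref{hyp:remling2}; and (c) $\sup_{n\ge0}\ell_n=\infty$ by the additional assumption \eqref{eq:sol}. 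With these, \cite[Theorem~2]{bf} gives $\sigma_{\ac}(\wt\rH)=\emptyset$, and together with the previous step $\sigma_{\ac}(\bH)=\emptyset$.

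It remains to identify the spectrum. Each summand in \eqref{eq:Hdecomp} is a nonnegative operator, so $\bH\ge0$ and $\sigma(\bH)\subseteq\R_{\ge0}$. For the reverse inclusion I would use the Dirichlet summands: \eqref{hyp:remling2} forces $s_n\to\infty$ (the even- and odd-indexed subsequences grow at least geometrically), so for all large $n$ the multiplicity $(s_n-1)(s_{n+1}-1)\ge1$ and $\rh_n$ genuinely occurs in \eqref{eq:Hdecomp} with $\sigma(\rh_n)=\{\pi^2k^2/\ell_n^2\}_{k\ge1}$ by \eqref{eq:sigma_hn}. By \eqref{eq:sol} I may pick $n_j\to\infty$ with $\ell_{n_j}\to\infty$; the eigenvalues of $\rh_{n_j}$ then become dense in $[0,\infty)$ as $j\to\infty$, whence $\R_{\ge0}\subseteq\ol{\cup_{n\ge1}\sigma(\rh_n)}\subseteq\sigma(\bH)$. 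This yields $\sigma(\bH)=\R_{\ge0}$ and completes the proof. The genuine analytic difficulty---the \emph{absence} of absolutely continuous spectrum for these sparse couplings, despite the spectrum filling the whole half-line---is entirely contained in \cite[Theorem~2]{bf}, whose proof rests on a Pr\"ufer/transfer-matrix (or right-limit) analysis exploiting that the free gaps $\ell_{n_j}\to\infty$ while the scatterers $\gamma_n$ stay bounded away from $1$. Within the present argument the main point requiring care is therefore the faithful translation of the antitree hypotheses into \cite{bf}'s conditions on the couplings, in particular identifying $\liminf_n s_{n+2}/s_n>1$ as exactly the statement that $\gamma_n$ is asymptotically nontrivial, and ensuring that the unbounded gaps do not obstruct the growth $s_n\to\infty$ underlying the spectrum computation.
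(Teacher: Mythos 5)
Your proposal is correct and follows essentially the same route as the paper: reduce via Theorem \ref{th:decomp} to the half-line operator $\wt\rH = U_\mu\rH U_\mu^{-1}$ and import the absence of absolutely continuous spectrum from Breuer--Frank. Two remarks. First, the result in \cite{bf} that applies directly to $\wt\rH$ is \cite[Theorem 6]{bf}, the statement for half-line operators with exactly the point (jump) interactions you describe; \cite[Theorem 2]{bf} is the corresponding statement for radial \emph{trees}, of which the present theorem is the announced analog. Your verification of the hypotheses (positive lower bound on the lengths, $\liminf_n \gamma_n>1$ obtained by reindexing \eqref{hyp:remling2}, and unbounded lengths from \eqref{eq:sol}) is precisely what the half-line theorem requires, so this is a citation slip rather than a gap. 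Second, for $\sigma(\bH)=\R_{\ge 0}$ the paper simply imports $\sigma(\wt\rH)=\R_{\ge 0}$ from the same theorem of \cite{bf}, whereas you derive the inclusion $\R_{\ge0}\subseteq\sigma(\bH)$ from the Dirichlet summands $\rh_n$ in \eqref{eq:Hdecomp}: condition \eqref{hyp:remling2} forces $s_n\to\infty$, so these summands occur with nonzero multiplicity for all large $n$, and \eqref{eq:sol} provides $\ell_{n_j}\to\infty$, making the eigenvalues \eqref{eq:sigma_hn} dense in $[0,\infty)$; together with $\bH\ge0$ this closes the argument. That is a correct, self-contained alternative using only the elementary part of the decomposition, and it makes the spectrum identification independent of \cite{bf}; the genuine analytic content (emptiness of the ac spectrum) is outsourced to \cite{bf} in both arguments.
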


\begin{proof}
By Theorem \ref{th:decomp}, it suffices to show that $\sigma(\wt\rH) = \R_{\ge 0}$ and $\sigma_{\ac}(\wt\rH)=\emptyset$ since $\wt\rH = U_\mu\rH U^{-1}_\mu$. However, the latter follows from \cite[Theorem 6]{bf}.
\end{proof}

Moreover, using the results from \cite[\S 4]{koma10} and arguing as in the proof of \cite[Theorem 1]{mik96} (see also \cite[Theorem 5.20]{EKMN}), one can prove the following statement. 

\begin{theorem}\label{th:km01}
Assume Hypothesis \ref{hyp:remling}. If in addition
\be\label{eq:km01}
\sup_{n\ge 0} \frac{s_{n+2}}{s_n} = \infty,
\ee
then $\sigma_{\ac}(\bH)=\emptyset$.
\end{theorem}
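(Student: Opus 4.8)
By Theorem~\ref{th:decomp}, together with the fact that each $\rh_n$ and $\wt\rh_n$ is a regular Sturm--Liouville operator on a compact interval and hence has purely discrete spectrum, we have $\sigma_{\ac}(\bH)=\sigma_{\ac}(\rH)$. Since $\rH$ is unitarily equivalent to $\wt\rH$ via $U_\mu$, it suffices to prove $\sigma_{\ac}(\wt\rH)=\emptyset$. Recall that $\wt\rH$ acts as $f\mapsto -f''$ on $L^2(\cI_\cL)$ subject, at every $t_n$, to the coupling conditions
\[
	f(t_n+)=r_n\,f(t_n-),\qquad f'(t_n+)=r_n^{-1}f'(t_n-),\qquad r_n:=\sqrt{s_{n+1}/s_{n-1}}\quad(n\ge 1).
\]
Under Hypothesis~\ref{hyp:remling} we have $\cL=\infty$ and $\inf_n\ell_n=\ell_\ast(\cA)>0$, so $\wt\rH$ is a self-adjoint realization of $-d^2/dx^2$ on the half-line with \emph{local point interactions on the uniformly discrete set} $\{t_n\}_{n\ge 1}$; moreover the extra assumption $\sup_n s_{n+2}/s_n=\infty$ is equivalent to $\sup_n r_n=\infty$, i.e.\ the coupling constants are unbounded along a subsequence $n_k\to\infty$.

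The plan is to analyze $\wt\rH$ through its transfer matrices, following the scheme of \cite[\S4]{koma10}. For $\lambda=k^2>0$ a solution of $\wt\rH f=\lambda f$ is free on each $I_n$ and satisfies the interface relation above; writing $\Psi(t)=(f(t),f'(t))^\top$, the passage $t_n\to t_{n+1}$ is governed by the unimodular transfer matrix
\be\label{eq:km_transfer}
	\cT_n(k)=\begin{pmatrix} r_{n+1} & 0\\ 0 & r_{n+1}^{-1}\end{pmatrix}
	\begin{pmatrix}\cos(k\ell_n) & k^{-1}\sin(k\ell_n)\\ -k\sin(k\ell_n) & \cos(k\ell_n)\end{pmatrix}.
\ee
A direct scattering computation at a single interface shows that an incoming plane wave $\E^{\I k x}$ is transmitted through the $n$-th interface with probability $|T_n(k)|^2=4r_n^2/(1+r_n^2)^2$, which tends to $0$ as $r_n\to\infty$ (the reflection coefficient tends to $-1$). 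Hence, along the subsequence $n_k$, the antitree contains infinitely many essentially impenetrable barriers, which heuristically prevents the existence of propagating modes and thus of an absolutely continuous component.

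Turning this heuristic into a proof is the content of the argument of \cite[Theorem~1]{mik96} (see also \cite[Theorem~5.20]{EKMN}). The key quantitative step is to show that the unboundedness of the couplings $r_{n_k}$ forces, for Lebesgue-a.e.\ $\lambda=k^2>0$, the existence of a subordinate solution (equivalently, appropriate growth of the products $T_N(k)=\cT_{N-1}(k)\cdots\cT_0(k)$); by Gilbert--Pearson subordinacy theory the set of energies without a subordinate solution then has measure zero, so its essential closure, which is the essential support of $\mu_{\ac}$, is empty. I expect this to be the main obstacle, because each $\cT_n(k)$ in \eqref{eq:km_transfer} is unimodular with an \emph{elliptic} free factor, and a single large $r_{n_k}$ need not enlarge the norm of the whole product $T_N(k)$: indeed, the periodic case $r_n\equiv\rho>1$, $\ell_n\equiv\ell$ (which is compatible with Hypothesis~\ref{hyp:remling} and corresponds to exponentially growing sphere numbers) produces a Kronig--Penney-type operator with purely absolutely continuous band spectrum. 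One must therefore track the Pr\"ufer amplitude and phase across the interfaces and exploit, for a.e.\ energy, the non-alignment of the expanding direction of $\diag(r_{n_k},r_{n_k}^{-1})$ with the propagated solution vector; here the uniform separation $\ell_n\ge\ell_\ast(\cA)>0$ of the interaction points lets the intermediate rotations equidistribute the phase, so that infinitely many strong barriers act genuinely expandingly. Carrying this out exactly as in \cite{mik96} yields the desired growth off a Lebesgue-null set of energies and hence $\sigma_{\ac}(\wt\rH)=\emptyset$, which by the reduction above completes the proof.
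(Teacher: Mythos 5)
Your reduction is exactly the paper's: Theorem \ref{th:decomp} gives $\sigma_{\ac}(\bH)=\sigma_{\ac}(\rH)$, and the transformation $U_\mu$ of Section \ref{sec:SCspec} turns $\rH$ into the half-line operator $\wt\rH$ with scaling conditions of strength $r_n=\sqrt{s_{n+1}/s_{n-1}}$ at the points $t_n$, so that \eqref{eq:km01} is indeed equivalent to $\sup_n r_n=\infty$. Up to that point you agree with the paper, which itself gives no further details and simply refers to \cite{koma10}, \cite{mik96}, \cite{EKMN}. The genuine gap is in what you propose next. You correctly observe that a single large factor $\diag(r,1/r)$ need not increase the norm of a product of transfer matrices, and you then claim the key step is to prove, for Lebesgue-a.e.\ energy, growth of the products (``equivalently'' existence of a subordinate solution) by exploiting ``equidistribution'' of the Pr\"ufer phases. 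This step is never carried out, and it is not something that follows ``exactly as in \cite{mik96}'': (a) the asserted equivalence between subordinacy and norm growth of transfer matrices is not a theorem one may invoke; (b) there is no source of phase equidistribution for an \emph{arbitrary deterministic} sequence of lengths $\ell_n\ge\ell_\ast(\cA)$ --- at resonant energies, where the free evolution between two strong interfaces maps the expanding direction of $\diag(r,1/r)$ onto the contracting one, products of arbitrarily large such factors remain bounded (this is precisely the Kronig--Penney band mechanism you mention), and excluding a null set of such energies for general $(\ell_n,r_n)$ is a hard dynamical problem, not a routine adaptation; (c) more globally, pointwise-in-energy control of solutions on the support of the a.c.\ part is known to be delicate: Last--Simon-type results give only Ces\`aro-averaged bounds on transfer matrices, and the pointwise statement (the Schr\"odinger conjecture) is false in general. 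So, as written, the decisive analytic step of your proof is missing, and the route you sketch for it would not go through by soft arguments.

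The step can be closed with no transfer-matrix or subordinacy analysis at all, by an operator-theoretic decoupling argument of Simon--Spencer type, which is the standard mechanism behind Mikhailets-type theorems and is what makes the paper's citation legitimate. Since $\sup_n r_n=\infty$, choose interfaces $n_1<n_2<\cdots$ with $\sum_j r_{n_j}^{-1}<\infty$ (say $r_{n_j}\ge 2^j$). As $r\to\infty$, the condition $f(t+)=rf(t-)$, $f'(t+)=r^{-1}f'(t-)$ converges in norm resolvent sense to the decoupled pair ``Dirichlet on the left, Neumann on the right'' ($f(t-)=0$, $f'(t+)=0$), and replacing the condition at a single point changes the resolvent by a rank-two operator of norm $O(r^{-1})$, uniformly in $j$ because $\ell_\ast(\cA)>0$. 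Hence the resolvent of $\wt\rH$ differs by a trace class operator from that of the orthogonal sum $\bigoplus_{j\ge 0}A_j$, where $A_j$ acts on the \emph{finite} interval $(t_{n_j},t_{n_{j+1}})$ (with $t_{n_0}:=0$), keeping the original interface conditions at the interior points $t_n$. Each $A_j$ is a regular operator with purely discrete spectrum, so the orthogonal sum is pure point and has empty absolutely continuous spectrum; the Birman--Krein/Kato--Rosenblum theorem then yields $\sigma_{\ac}(\wt\rH)=\emptyset$, hence $\sigma_{\ac}(\bH)=\emptyset$. Note how the hypotheses enter: \eqref{eq:km01} produces infinitely many arbitrarily strong interfaces, so that the decoupled pieces are finite intervals and the rank-two errors are summable, while $\ell_\ast(\cA)>0$ gives the uniformity of those errors. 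This is the argument your proposal is missing.
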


In contrast to radially symmetric trees, antitrees always have a rather rich point spectrum (see Theorem \ref{th:decomp}). Moreover, under the assumptions of Hypothesis \ref{hyp:remling} this point spectrum is not a discrete subset, that is, it has finite accumulation points (see Remark \ref{rem:sigma_n}). On the other hand, similar to \cite[Theorem 7]{bf}, we can construct a class of antitrees such that $\sigma(\rH)$ is purely singular continuous. Moreover, it is possible to show that under the assumption $\ell_\ast(\cA)>0$ this situation is in a certain sense typical (cf. \cite[Theorems 4 and 8]{bf}).  Let us only mention the following Remling-type result (cf. \cite[Theorem 1.1]{rem}). 

\begin{theorem}\label{th:rem02}
Assume Hypothesis \ref{hyp:remling}. Also, assume that the sets $\{\ell_n\}_{n\ge 0}$ and $\{\frac{s_{n+2}}{s_n}\}_{n\ge 0}$ are finite. Then $\sigma_{\ac}(\bH)\neq \emptyset$ if and only if the sequence $\{(\ell_n,\frac{s_{n+2}}{s_n})\}_{n\ge 0}$ is eventually periodic.
\end{theorem}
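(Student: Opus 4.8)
The plan is to reduce the entire statement to a spectral statement about the auxiliary operator $\wt{\rH}$ and then invoke Remling's theorem on the absolutely continuous spectrum of Jacobi/Schr\"odinger operators in the form adapted to the present setting. By Theorem \ref{th:decomp}, together with the unitary equivalence $\wt{\rH} = U_\mu \rH U_\mu^{-1}$ recorded at the start of this section, we have $\sigma_{\ac}(\bH) = \sigma_{\ac}(\rH) = \sigma_{\ac}(\wt{\rH})$, since the remaining summands $\rh_n$ and $\wt{\rh}_n$ in \eqref{eq:Hdecomp} contribute only discrete eigenvalues (under Hypothesis \ref{hyp:remling} the lengths stay bounded below, so each of these families has eigenvalues accumulating only at $+\infty$, hence empty a.c.\ spectrum). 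Thus it suffices to characterize when $\sigma_{\ac}(\wt{\rH}) \neq \emptyset$.

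First I would recast $\wt{\rH}$ as a one-dimensional Schr\"odinger-type operator whose coefficients are determined by the ``data sequence'' $\{(\ell_n, s_{n+1}/s_{n-1})\}_{n\ge 0}$, or equivalently by $\{(\ell_n, \tfrac{s_{n+2}}{s_n})\}$ after pairing consecutive interface conditions. The explicit boundary/coupling conditions for $\wt{\rH}_0$ displayed above show that on each interval $I_n$ the operator acts as $-d^2/dt^2$, and the only non-trivial data are the interval lengths $\ell_n$ and the jump ratios $\sqrt{s_{n+1}/s_{n-1}}$ at the points $t_n$. The key observation is that these coupling coefficients depend on $n$ only through the ratio $s_{n+2}/s_n$ (when one records the transfer across two consecutive spheres), so the operator $\wt{\rH}$ is determined, up to unitary equivalence, by the sequence $\xi_n := (\ell_n, s_{n+2}/s_n)$.

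Next I would appeal to Remling's oracle theorem (cf.\ \cite[Theorem 1.1]{rem}) in its continuum/string incarnation, exactly as in the tree case \cite[Theorem 7]{bf} and \cite[Theorem 4]{bf}. Remling's theorem states that the $\omega$-limit points (under the shift) of the coefficient sequence of a half-line operator with nonempty a.c.\ spectrum must be reflectionless on $\sigma_{\ac}$; when the data take only finitely many values, this forces the coefficient sequence to lie, in the limit, on a finite-dimensional isospectral torus, which in the present discrete setting means the sequence must be eventually periodic. The hypothesis that both $\{\ell_n\}$ and $\{s_{n+2}/s_n\}$ are finite sets is precisely what makes the shift-orbit closure compact and the reflectionless limit set a finite union of periodic configurations. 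For the converse direction, if $\xi_n = (\ell_n, s_{n+2}/s_n)$ is eventually periodic, then $\wt{\rH}$ is a finite-rank (indeed compactly supported) perturbation of a periodic string operator; periodic operators have purely absolutely continuous spectrum consisting of bands, and such perturbations preserve the a.c.\ part by the Kato--Rosenblum theorem (or by the trace-class scattering argument valid once $\ell_\ast(\cA)>0$ guarantees the requisite resolvent comparison). Hence $\sigma_{\ac}(\bH) = \sigma_{\ac}(\wt{\rH}) \neq \emptyset$.

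The main obstacle I anticipate is the faithful translation of $\wt{\rH}$ into the exact normal form to which Remling's theorem applies. Remling's result is formulated for operators of a fixed type (e.g.\ with a uniformly local coefficient structure), and one must verify that the lower bound $\ell_\ast(\cA)>0$ together with the growth condition \eqref{hyp:remling2} yields coefficients bounded and bounded below, so that the relevant right-limits exist and the reflectionless characterization is applicable; in particular one must check that the coupling constants $\sqrt{s_{n+2}/s_n}$ do not degenerate (which is where \eqref{hyp:remling2} enters) and that the finiteness of the value sets transfers correctly to the operator coefficients rather than merely to the geometric data. Once this identification is secured, both implications follow from the cited results essentially verbatim, so I would present the argument by stating the reduction carefully and then citing \cite{rem}, \cite{bf} for the two directions, emphasizing only the verification that Hypothesis \ref{hyp:remling} supplies the uniform ellipticity needed to invoke them.
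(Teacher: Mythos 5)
Your proposal follows essentially the same route as the paper: the paper omits the proof of this theorem altogether, remarking only that it is analogous to \cite[Theorem 5.1]{ess}, and that argument is precisely your reduction to $\wt{\rH}$ via $U_\mu$ and the decomposition \eqref{eq:Hdecomp}, followed by Remling's oracle theorem \cite{rem} (finitely many coefficient values plus nonempty a.c.\ spectrum forces eventual periodicity) for one direction and the periodic-plus-compactly-supported-perturbation argument for the converse. The technical caveat you flag---adapting Remling's theorem to half-line operators with interface/jump conditions rather than potentials---is exactly what \cite{ess} carries out in the radial tree setting, so your sketch matches the intended proof.
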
 

The proof is again omitted since it is analogous to that of \cite[Theorem 5.1]{ess}.

\section{Absolutely continuous spectrum}\label{sec:ACspec}

The decomposition \eqref{eq:Hdecomp} shows that 
\be
\sigma_{\ac}(\bH) = \sigma_{\ac}(\rH)
\ee
and both have multiplicity at most $1$. The results of the previous section show that antitrees with nonempty absolutely continuous spectrum is a rare event.  
Our main aim in this section is to apply two recent result from \cite{bd17} and \cite{ek18} on the absolutely continuous spectrum of Krein and generalized indefinite strings, respectively, in order to construct several classes of antitrees with rich absolutely continuous spectra, however, which are not eventually periodic in the sense of Theorem \ref{th:rem02}.  
We begin with the following result.

\begin{theorem}\label{th:ac01}
Let $\cA$ be an infinite radially symmetric antitree such that
\[
\cL = \sum_{n\ge 0}\ell_n = \infty.
\] 
Also, let $\mu$ be the function given by \eqref{eq:muL}. If 
\be\label{eq:ac01} 
\sum_{n\ge 0} \left(\int_n^{n+2} \mu(x)dx\int_n^{n+2}\frac{dx}{\mu(x)} - 4\right)<\infty,
\ee
then $\sigma_{\ac}(\bH) = \R_{\ge 0}$.
\end{theorem}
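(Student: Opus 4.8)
The plan is to reduce the claim, via Theorem~\ref{th:decomp}, to a statement purely about the half-line Sturm--Liouville operator $\rH$ and then to recast it as a Krein string problem to which the Szeg\H{o}-type theorem of Bessonov and Denisov \cite{bd17} applies. Since $\sigma_{\ac}(\bH) = \sigma_{\ac}(\rH)$ (as noted at the start of this section) and the remaining summands $\rh_n$, $\wt{\rh}_n$ are regular operators contributing only discrete spectrum, it suffices to prove $\sigma_{\ac}(\rH) = \R_{\ge 0}$. Because $\cL = \infty$, the operator $\rH$ is self-adjoint by Theorem~\ref{th:SA}. First I would pass to the Krein string form \eqref{eq:wtH}--\eqref{eq:g_def}: under the change of variables $x \mapsto g(x) = \int_0^x \frac{ds}{\mu(s)}$, the operator $\rH$ becomes $\wt{\rH}$ acting by $-\frac{1}{\wt\mu}\frac{d^2}{dx^2}$ on $L^2([0,\cL_\mu);\wt\mu)$ with $\wt\mu = \mu^2 \circ g^{-1}$. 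Here the assumption $\cL = \infty$ guarantees the string has the appropriate (regular/infinite-length) endpoint structure needed to invoke \cite{bd17}.

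The heart of the argument is to verify that the hypothesis \eqref{eq:ac01} is exactly the Szeg\H{o}-type (entropy) condition from \cite{bd17} that forces $\sigma_{\ac}$ to fill the half-line. The Bessonov--Denisov theorem characterizes strings whose spectral measure has an absolutely continuous part covering $[0,\infty)$ through a logarithmic integral / summability condition on the mass distribution, which after discretization takes the shape of a sum over dyadic-type windows of a quantity measuring the local deviation of the string from the free (Lebesgue) string. The key computation is therefore to translate their condition into the original variables $\mu$, $\ell_n$, $s_n$. I would compute, for the window corresponding to the interval where the string variable ranges over $[n, n+2]$ (in the normalization built into \eqref{eq:ac01}), the product $\int_n^{n+2}\mu(x)\,dx \cdot \int_n^{n+2}\frac{dx}{\mu(x)}$, and observe that for the free string this product equals exactly $4$ by Cauchy--Schwarz equality; the excess over $4$ measures the oscillation of $\mu$. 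The summability of these excesses is precisely \eqref{eq:ac01}, and this is what \cite{bd17} requires.

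The main obstacle I anticipate is matching conventions: the theorem in \cite{bd17} is stated for a canonically normalized string (e.g. with a prescribed endpoint or a specific integral normalization of the mass), whereas our string $\wt{\rH}$ arises in the dual/transformed coordinates of \eqref{eq:mu_g}. I would need to check that the ``window'' quantity in \eqref{eq:ac01}, written as a product of integrals of $\mu$ and $1/\mu$ over the same interval, is invariant under the change of variables $g$ and thus equals the corresponding quantity for $\wt\mu$ --- this invariance is essentially the content of the identity $\int \mu\,dx \cdot \int \frac{dx}{\mu} = \int \wt\mu\,dg \cdot \int \frac{dg}{\wt\mu}$ after substitution, which makes the condition coordinate-free and is why it can be stated directly in terms of $\mu$. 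A secondary technical point is ensuring that the windows $[n,n+2]$ overlap enough to control the whole real line and that the piecewise-constant nature of $\mu$ (with jumps at the $t_n$) does not violate any regularity or finiteness hypotheses of \cite{bd17}; here the condition $\cL = \infty$ together with \eqref{eq:ac01} (which in particular forces the local excesses to be bounded) should supply the needed control.

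Once these identifications are in place, the conclusion $\sigma_{\ac}(\wt{\rH}) = [0,\infty)$ follows directly from \cite{bd17}, and transporting back through the unitary equivalences $\wt{\rH} \cong \rH$ and the decomposition \eqref{eq:Hdecomp} yields $\sigma_{\ac}(\bH) = \R_{\ge 0}$, completing the proof.
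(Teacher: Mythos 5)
Your overall route is exactly the paper's: decompose $\bH$ via Theorem \ref{th:decomp} so that $\sigma_{\ac}(\bH)=\sigma_{\ac}(\rH)$ (the regular summands $\rh_n$, $\wt{\rh}_n$ contribute no ac spectrum), pass to the Krein string $\wt{\rH}$ of \eqref{eq:wtH}--\eqref{eq:g_def}, and invoke Theorem 2 of \cite{bd17}; the paper's proof is precisely this and leaves the translation of condition (1.9) of \cite{bd17} into \eqref{eq:ac01} as a ``straightforward calculation''. (Your appeal to Theorem \ref{th:SA} is fine, since $\cL=\infty$ forces $\vol(\cA)\ge\cL=\infty$.)

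However, the one computation you actually propose to carry out --- the ``invariance'' identity --- is false, and it sits exactly where the translation must happen. With $\wt\mu=\mu^2\circ g^{-1}$ and $d\xi=g'(x)\,dx=dx/\mu(x)$, one indeed gets $\int_{g(I)}\wt\mu\,d\xi=\int_I\mu\,dx$, but the second factor transforms as $\int_{g(I)}\frac{d\xi}{\wt\mu}=\int_I\frac{dx}{\mu^3}$, not $\int_I\frac{dx}{\mu}$; already for $\mu\equiv c\neq 1$ your two sides are $|I|^2$ and $|I|^2/c^2$. The resolution is that the Bessonov--Denisov quantity for a string over a window $J$ (in the string coordinate) is not $\int_J\wt\mu\,d\xi\cdot\int_J\frac{d\xi}{\wt\mu}$ but $(\text{length of }J)\times(\text{mass of }J)=\det\int_J\diag(1,\wt\mu)\,d\xi=|J|\cdot\int_J\wt\mu\,d\xi$. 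Taking $J=g\big((n,n+2)\big)$, the length is $|J|=\int_n^{n+2}\frac{dx}{\mu}$ and the mass is $\int_n^{n+2}\mu\,dx$, so this product is exactly the term in \eqref{eq:ac01}. Equivalently: since the length element is $d\xi=dx/\mu$ and the mass element is $dM=\wt\mu\,d\xi=\mu\,dx$, one has $d\xi\,dM=dx^2$, i.e.\ the original antitree variable $x$ is precisely the determinant-normalized coordinate of the string, and the intervals $(n,n+2)$ in \eqref{eq:ac01} are the unit windows of \cite{bd17} written in that coordinate (this is also why $\cL=\infty$ is the right hypothesis: it makes this coordinate exhaust $[0,\infty)$, and it gives the singularity condition $\cL_\mu+\vol(\cA)=\infty$ required in \cite{bd17}). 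Your windows are therefore in the $x$-variable, not intervals where the string variable $\xi$ ranges over $[n,n+2]$. With this correction your argument closes and coincides with the paper's proof; as written, the step verifying the hypothesis of \cite{bd17} would fail.
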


\begin{proof}
We only need to use Theorem 2 from \cite{bd17}. Indeed, as we know (see the proof of Theorem \ref{th:SpEst}), the operator $\rH$ is unitarily equivalent to the Krein string operator $\wt{\rH}$ given by \eqref{eq:wtH}--\eqref{eq:g_def}. 
Applying now Theorem 2 from \cite{bd17} to the operator $\wt\rH$, after straightforward calculations the corresponding condition (1.9) from \cite{bd17} turns into \eqref{eq:ac01}. 
\end{proof}

\begin{remark}\label{rem:bessden}
Let us mention that in Theorem \ref{th:ac01}, upon suitable modifications of \cite[Theorem 2]{bd17}, one can replace the intervals $(n,n+2)$ by intervals $\cI_n$, $n\ge 0$ which ``asymptotically" behave like $(n,n+2)$ (actually, by intervals with lengths uniformly bounded from above as well as by a positive constant from below  and satisfying a suitable overlapping property \cite{bess}), however, one has to replace 4 by a square of the length of the corresponding interval:
\be\label{eq:ac01mod} 
\sum_{n\ge 0} \left(\int_{\cI_n} \mu(x)dx\int_{\cI_n}\frac{dx}{\mu(x)} - |\cI_n|^2\right)<\infty.
\ee
\end{remark}

Let us first demonstrate the above result by considering an example of equilateral antitrees and then we shall extend it to a much wider setting (see Theorem \ref{cor:ac01} below). 

\begin{corollary}[Equilateral antitrees]\label{cor:ac_equilat}
Let $\cA$ be an infinite radially symmetric antitree with $\ell_n = \ell>0$  
for all $n\ge0$. If 
\be\label{eq:remL1}
\sum_{n\ge 0} \Big(\frac{s_{n+2}}{s_n} - 1\Big)^2 <\infty,
\ee
then $\sigma_{\ac}(\bH) = \R_{\ge 0}$.
\end{corollary}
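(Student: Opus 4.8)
The plan is to derive this as a direct consequence of Theorem~\ref{th:ac01}. First I would reduce to the normalized case $\ell=1$. A uniform rescaling of the metric, sending each edge of length $\ell$ to an edge of length $1$, implements a unitary map under which $\bH$ is transformed into $\ell^{-2}\bH'$, where $\bH'$ is the Kirchhoff Laplacian of the equilateral antitree with unit edge lengths (the second derivative scales by $\ell^{-2}$, while the continuity and Kirchhoff conditions are preserved). Since then $\sigma_{\ac}(\bH)=\ell^{-2}\sigma_{\ac}(\bH')$ and the hypothesis \eqref{eq:remL1} does not involve $\ell$, it suffices to treat $\ell=1$. In this case $t_n=n$, so $\cL=\sum_{n\ge0}\ell_n=\infty$ and Theorem~\ref{th:ac01} applies once its integral condition \eqref{eq:ac01} is verified (for general $\ell$ one may instead invoke Remark~\ref{rem:bessden} with the intervals $\cI_n=(t_n,t_{n+2})$ of length $2\ell$).

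Next I would evaluate the summand in \eqref{eq:ac01}. Since $\mu$ from \eqref{eq:muL} equals $s_ns_{n+1}$ on $I_n=[n,n+1)$ and $s_{n+1}s_{n+2}$ on $I_{n+1}=[n+1,n+2)$, a direct computation gives
\[
\int_n^{n+2}\mu(x)\,dx\cdot\int_n^{n+2}\frac{dx}{\mu(x)}
=(s_ns_{n+1}+s_{n+1}s_{n+2})\Big(\frac{1}{s_ns_{n+1}}+\frac{1}{s_{n+1}s_{n+2}}\Big)
=\frac{(s_n+s_{n+2})^2}{s_ns_{n+2}}.
\]
Subtracting $4$ and using $(s_n+s_{n+2})^2-4s_ns_{n+2}=(s_{n+2}-s_n)^2$, the condition \eqref{eq:ac01} becomes
\[
\sum_{n\ge0}\frac{(s_{n+2}-s_n)^2}{s_ns_{n+2}}=\sum_{n\ge0}\frac{(r_n-1)^2}{r_n}<\infty,
\qquad r_n:=\frac{s_{n+2}}{s_n}.
\]

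Finally I would show that the hypothesis \eqref{eq:remL1}, namely $\sum_{n\ge0}(r_n-1)^2<\infty$, implies the displayed condition. The assumption forces $r_n\to1$, so there is an $N$ with $r_n\ge \tfrac12$ for all $n\ge N$; hence $(r_n-1)^2/r_n\le2(r_n-1)^2$ for $n\ge N$, while the finitely many initial terms are finite. Thus $\sum_{n\ge0}(r_n-1)^2/r_n<\infty$, Theorem~\ref{th:ac01} yields $\sigma_{\ac}(\rH)=\R_{\ge0}$, and the decomposition \eqref{eq:Hdecomp} gives $\sigma_{\ac}(\bH)=\sigma_{\ac}(\rH)=\R_{\ge0}$. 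The only point requiring any care is this last comparison between $\sum(r_n-1)^2$ and $\sum(r_n-1)^2/r_n$, which is mild precisely because $r_n\to1$; the remainder is a bookkeeping computation with the explicitly known piecewise-constant weight $\mu$.
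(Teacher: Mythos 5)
Your proposal is correct and takes essentially the same route as the paper: the paper likewise verifies the condition of Theorem \ref{th:ac01} by the identical computation, except that instead of rescaling to $\ell=1$ it invokes Remark \ref{rem:bessden} directly with the intervals $\cI_n=(\ell n,\ell(n+2))$ (an alternative you also mention), arriving at the summand $\ell^2\frac{s_n}{s_{n+2}}\big(\frac{s_{n+2}}{s_n}-1\big)^2$. Your explicit verification that $\sum_n (r_n-1)^2<\infty$ implies $\sum_n (r_n-1)^2/r_n<\infty$ (using $r_n\to 1$) spells out a comparison that the paper leaves implicit, but this is a refinement of the same argument rather than a different one.
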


\begin{proof}
Setting $\cI_n = (\ell n,\ell(n+2))$, $n\ge 0$, straightforward calculations show that
\begin{align*}
\int_{\cI_n} \mu(x)dx & \int_{\cI_n}\frac{dx}{\mu(x)} - |\cI_n|^2 \\
 & = (s_ns_{n+1} + s_{n+1}s_{n+2})\Big(\frac{1}{s_ns_{n+1}} + \frac{1}{s_{n+1}s_{n+2}}\Big)\ell^2 -4\ell^2 \\
& = \frac{(s_{n+2} + s_n)^2}{s_ns_{n+2}}\ell^2 - 4\ell^2 = \ell^2\frac{(s_{n+2} - s_n)^2}{s_ns_{n+2}} = \ell^2\frac{s_n}{s_{n+2}}\Big(\frac{s_{n+2}}{s_n} - 1\Big)^2. 
\end{align*}
Theorem \ref{th:ac01} and Remark \ref{rem:bessden} complete the proof.
\end{proof}

\begin{remark}\label{rem:8.4}
First of all, Corollary \ref{cor:ac_equilat} demonstrates that \eqref{hyp:remling2} is essential for the results of Section \ref{sec:SCspec}. Let us also mention that it is possible to show by using the results of \cite[\S 4.2]{koma10} that the stronger condition
\be\label{eq:remL2}
\sum_{n\ge 0} \Big|\frac{s_{n+2}}{s_n} -1\Big| <\infty
\ee
holds exactly when the operator $\wt{\rH}$ considered in Section \ref{sec:SCspec} is a trace class perturbation (in the resolvent sense) of the free Hamiltonian $-\frac{d^2}{dx^2}$ acting in $L^2(\R_+)$ and hence in this case the Birman--Krein theorem implies $\sigma_{\ac}(\rH) = \R_{\ge 0}$. However, \eqref{eq:remL2} does not hold already for polynomially growing equilateral antitrees, e.g., take $s_n = n+1$ (see also Section \ref{ss:polAT}). Moreover, \eqref{eq:remL1} is equivalent to the fact that $\wt{\rH}$ is a Hilbert--Schmidt class perturbation (in the resolvent sense) of the free Hamiltonian.
\end{remark}

The rather strong assumption that $\cA$ is equilateral can indeed be replaced by $\ell_\ast(\cA) >0$.  
In order to do this, it will turn out useful to rewrite \eqref{eq:ac01}. 
Let 
\be\label{eq:Mran}
\cM : = \ran(\mu) = \{ s_n s_{n+1}\colon \; n \in \Z_{\ge 0} \}
\ee
be the image of the function $\mu$ defined in \eqref{eq:muL}. For every $s \in \cM$, we set 
\be\label{eq:}
\cI_s := \mu^{-1}(\{ s \}) = \{ x \in [0, \infty)\colon \; \mu(x) = s \},
\ee
that is, $\cI_s$ is the preimage of $\{s\} \in \cM$ with respect to $\mu$. 

\begin{lemma} \label{lem:ac01}
	Let $\cA$ be an infinite radially symmetric antitree with $\cL=\infty$. Then 
	\be \label{eq:lemac01}
		\sum_{n\ge 0} \left(\int_n^{n+2} \mu(x)dx\int_n^{n+2}\frac{dx}{\mu(x)} - 4\right) = \frac{1}{2}  \sum_{n\ge 0} \sum_{s \in \cM} \sum_{\xi \neq s}  |\cI_s^n||\cI_\xi^n| \frac{(s - \xi)^2}{s \xi} ,
	\ee
	where $|\cI_s^n|$ is the Lebesgue measure of $\cI_s^n := \cI_s \cap (n, n+2)$.
\end{lemma}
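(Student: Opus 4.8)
The plan is to decompose each of the two integrals over the level sets of the piecewise constant weight $\mu$ and then symmetrize the resulting double sum. First I would record the role of the hypothesis $\cL=\infty$: it guarantees that $\mu$ is defined on all of $[0,\infty)$, so that each interval $(n,n+2)$ lies inside the domain $[0,\cL)$. Moreover, since $t_m=\sum_{j<m}\ell_j\to\cL=\infty$ as $m\to\infty$, only finitely many of the intervals $I_m$ meet any bounded interval $(n,n+2)$; hence $\mu$ assumes only finitely many values of $\cM$ there, both integrals are finite, and the level sets $\{\cI_s^n\}_{s\in\cM}$ form a finite (essential) partition of $(n,n+2)$.

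Next, fixing $n$ and abbreviating $m_s:=|\cI_s^n|$, I would use $\mu\equiv s$ on $\cI_s^n$ to write
\begin{align*}
\int_n^{n+2}\mu(x)\,dx &= \sum_{s\in\cM} s\, m_s, & \int_n^{n+2}\frac{dx}{\mu(x)} &= \sum_{s\in\cM}\frac{m_s}{s},
\end{align*}
while the identity $\sum_{s\in\cM}m_s=|(n,n+2)|=2$ yields $4=\big(\sum_{s}m_s\big)^2=\sum_{s,\xi}m_s m_\xi$. Subtracting, the $n$-th summand on the left-hand side of \eqref{eq:lemac01} becomes
\[
\int_n^{n+2}\mu\,dx\int_n^{n+2}\frac{dx}{\mu} - 4 = \sum_{s,\xi\in\cM}\Big(\frac{s}{\xi}-1\Big) m_s m_\xi .
\]

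Finally I would symmetrize: since the double sum is invariant under the relabeling $s\leftrightarrow\xi$ and the weights $m_s m_\xi$ are symmetric, averaging the expression with its relabeled copy gives
\[
\sum_{s,\xi}\Big(\frac{s}{\xi}-1\Big)m_s m_\xi = \frac12\sum_{s,\xi}\Big(\frac{s}{\xi}+\frac{\xi}{s}-2\Big)m_s m_\xi = \frac12\sum_{s,\xi}\frac{(s-\xi)^2}{s\xi}\,m_s m_\xi .
\]
The diagonal terms $s=\xi$ vanish, so the sum effectively runs over $\xi\neq s$; summing over $n\ge 0$ then produces exactly the right-hand side of \eqref{eq:lemac01}. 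All rearrangements are legitimate because, for each fixed $n$, only finitely many $m_s$ are nonzero, and over the outer sum in $n$ both sides are sums of nonnegative terms, so the identity holds in $[0,\infty]$. There is no real obstacle here; the only point that needs care is the justification that $\mu$ takes finitely many values on each $(n,n+2)$, which (as noted) is exactly where $\cL=\infty$, equivalently $t_m\to\infty$, enters.
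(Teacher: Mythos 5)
Your proof is correct and follows essentially the same route as the paper's: expand both integrals over the level sets of $\mu$, use $\sum_{s\in\cM}|\cI_s^n|=2$, and symmetrize in $s\leftrightarrow\xi$ to produce the factor $\frac{(s-\xi)^2}{s\xi}$. The only (cosmetic) difference is that you write $4=\sum_{s,\xi}m_sm_\xi$ at once and symmetrize the full double sum, whereas the paper first splits off the diagonal $\sum_s|\cI_s^n|^2$ and handles it via $\sum_s|\cI_s^n|(|\cI_s^n|-2)$; your added remarks on finiteness of the level sets are a harmless refinement.
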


\begin{proof}

For every fixed $n \in \Z_{\ge 0}$, we clearly have
	\begin{align*}
		\int_n^{n+2} &\mu(x)dx\int_n^{n+2}\frac{dx}{\mu(x)} = 
		\Big ( \sum_{s \in \cM} s | \cI_s^n| \Big)  \Big( \sum_{\xi \in \cM} \frac{1}{\xi}| \cI_\xi^n| \Big) \\
	=& \sum_{s  \in \cM } \sum_{\xi \neq s} | \cI_s^n|  | \cI_\xi^n| \frac{s}{\xi} + \sum_{s \in \cM} | \cI_s^n|^2 
	= \frac{1}{2} \sum_{s  \in \cM } \sum_{\xi \neq s}  | \cI_s^n|  | \cI_\xi^n| \left  (\frac{\xi}{s} + \frac{s}{\xi} \right) + \sum_{s \in \cM}| \cI_s^n|^2.
\end{align*}
Moreover, by construction 
\be\label{eq:sumIn}
\sum_{s \in \cM} | \cI_s^n| =  2,
\ee
 and hence
\begin{align*}
	\sum_{s \in \cM} | \cI_s^n|^2 - 4 = \sum_{s \in \cM} | \cI_s^n|  (| \cI_s^n| - 2) = 
	- \sum_{s \in \cM} \sum_{\xi \neq s}  | \cI_s^n|  | \cI_\xi^n|.
\end{align*}
	Combining the last two equalities, we get
	\begin{align*}
	\int_n^{n+2} \mu(x)dx \int_n^{n+2}\frac{dx}{\mu(x)} - 4 &
	= \frac{1}{2} \sum_{s \in \cM} \sum_{\xi \neq s}  | \cI_s^n|  | \cI_\xi^n| \left  (\frac{\xi}{s} + \frac{s}{\xi} -2 \right ) \\
		&= \frac{1}{2}  \sum_{s \in \cM} \sum_{\xi \neq s} | \cI_s^n|  | \cI_\xi^n| \frac{(s - \xi)^2}{s \xi},
	\end{align*}
	which completes the proof.
\end{proof}
 
\begin{theorem} \label{cor:ac01}
Let $\cA$ be an infinite radially symmetric antitree with sphere numbers satisfying \eqref{eq:remL1}. If
\[ 
\ell_\ast( \cA) = \inf_{n\ge 0}\ell_n > 0,
\]
then $\sigma_{\ac}(\bH) = \R_{\ge 0}$.
\end{theorem}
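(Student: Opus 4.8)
The plan is to reduce the claim to the Szeg\H{o}-type criterion of Bessonov and Denisov in the flexible form recorded in Remark \ref{rem:bessden}. Since $\ell_\ast(\cA)>0$ forces $\cL=\sum_{n\ge 0}\ell_n=\infty$, Theorem \ref{th:ac01} applies, and by Remark \ref{rem:bessden} it suffices to exhibit a family of intervals $\cI_n=(x_n,x_{n+2})$, arising from an increasing sequence $\{x_n\}$ with gaps bounded above and below by positive constants, for which
\[
\sum_{n\ge 0}\Big(\int_{\cI_n}\mu(x)\,dx\int_{\cI_n}\frac{dx}{\mu(x)}-|\cI_n|^2\Big)<\infty .
\]
The key idea is to choose the grid $\{x_n\}$ adapted to the jumps of $\mu$, so that each $\cI_n$ sees at most one jump and hence its defect is governed solely by the ratios controlled by \eqref{eq:remL1}.

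Concretely, I would build $\{x_n\}$ from the jump points $\{t_k\}_{k\ge 1}$ of $\mu$ by subdividing the long intervals: whenever $\ell_k=t_{k+1}-t_k>2\ell_\ast$, insert equally spaced points in $(t_k,t_{k+1})$ so that every resulting gap lies in $[\ell_\ast,2\ell_\ast]$ (possible since $\ell_k\ge\ell_\ast$ admits an integer number of subdivisions in the required range). Relabelling jumps and subdivision points as an increasing sequence $\{x_n\}$, every gap $h_n:=x_{n+1}-x_n$ satisfies $\ell_\ast\le h_n\le 2\ell_\ast$, so $2\ell_\ast\le|\cI_n|\le 4\ell_\ast$ and the intervals $\cI_n=(x_n,x_{n+2})$ overlap in the same two-fold pattern as $(n,n+2)$. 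Because all jumps of $\mu$ are grid points, $\mu$ is constant on each cell $(x_n,x_{n+1})$; writing $\mu_n$ for this value, the same two-level computation already carried out in Corollary \ref{cor:ac_equilat} gives
\[
\int_{\cI_n}\mu\,dx\int_{\cI_n}\frac{dx}{\mu}-|\cI_n|^2=h_nh_{n+1}\,\frac{(\mu_n-\mu_{n+1})^2}{\mu_n\mu_{n+1}} .
\]

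This defect vanishes unless the middle grid point $x_{n+1}$ is an actual jump $t_k$, in which case $\mu_n=s_{k-1}s_k$ and $\mu_{n+1}=s_ks_{k+1}$, so the term equals $h_nh_{n+1}\,\frac{(s_{k-1}-s_{k+1})^2}{s_{k-1}s_{k+1}}$. Since each jump is the midpoint of exactly one $\cI_n$, summing over $n$ amounts to summing over jumps, and using $h_nh_{n+1}\le 4\ell_\ast^2$ together with $\frac{(s_{k-1}-s_{k+1})^2}{s_{k-1}s_{k+1}}=\frac{s_{k-1}}{s_{k+1}}\big(\frac{s_{k+1}}{s_{k-1}}-1\big)^2$ yields
\[
\sum_{n\ge 0}\Big(\int_{\cI_n}\mu\,dx\int_{\cI_n}\frac{dx}{\mu}-|\cI_n|^2\Big)\le 4\ell_\ast^2\,\sup_{m\ge 0}\frac{s_m}{s_{m+2}}\,\sum_{m\ge 0}\Big(\frac{s_{m+2}}{s_m}-1\Big)^2 .
\]
The summability \eqref{eq:remL1} makes the last sum finite and also forces $s_{m+2}/s_m\to 1$, whence $\sup_m s_m/s_{m+2}<\infty$; thus the whole expression is finite and Remark \ref{rem:bessden} gives $\sigma_{\ac}(\bH)=\R_{\ge 0}$.

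I expect the main obstacle to be the structural side of the argument rather than the computation: one must check that the adaptively refined intervals $\cI_n$ genuinely satisfy the hypotheses behind the modified Bessonov--Denisov theorem (uniform two-sided length bounds and the overlapping property), which is exactly where $\ell_\ast(\cA)>0$ is used to control the case $\sup_n\ell_n=\infty$. The role of the jump-adapted grid is to localise every contribution to a single jump, so that only the two-step ratios $s_{m+2}/s_m$ enter and the condition \eqref{eq:remL1} can be applied directly.
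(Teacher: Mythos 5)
Your proof is correct, and it rests on the same key external input as the paper: the Bessonov--Denisov criterion in the form of Theorem \ref{th:ac01} together with the flexible-interval version in Remark \ref{rem:bessden}, combined with the structural idea of making each test interval see at most one jump of $\mu$, so that only the two-step ratios $s_{n+2}/s_n$ enter and \eqref{eq:remL1} applies. The implementation, however, differs from the paper's in two ways. The paper first rescales the antitree so that $\ell_\ast(\cA)\ge 2$ (a reduction justified by a standard unitary dilation, which changes the spectrum only by a positive multiple) and then works with the \emph{fixed} intervals $(n,n+2)$, on which $\mu$ automatically takes at most two values; the defect is then estimated via Lemma \ref{lem:ac01}. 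You instead keep the original lengths and build a \emph{jump-adapted} grid with cells of length in $[\ell_\ast,2\ell_\ast]$, which lets you bypass both the scaling step and Lemma \ref{lem:ac01}: since $\mu$ is constant on each cell, the defect is given by the exact two-level identity already used in Corollary \ref{cor:ac_equilat}, and the sum over intervals becomes literally a sum over jumps. What each approach buys: yours is computationally cleaner (an identity rather than a two-stage estimate) and avoids the scaling reduction, but it leans more heavily on the informal content of Remark \ref{rem:bessden}, since your intervals have lengths in $[2\ell_\ast,4\ell_\ast]$ rather than being a scaled copy of $(n,n+2)$; the paper's route can be made rigorous using only the literal statement of Theorem \ref{th:ac01} plus the dilation argument. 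One point you handle more carefully than the paper: both proofs need the observation that \eqref{eq:remL1} forces $s_{n+2}/s_n\to 1$, hence $\sup_n s_n/s_{n+2}<\infty$, in order to pass from $(s_{n+2}-s_n)^2/(s_n s_{n+2})$ to $\big(\tfrac{s_{n+2}}{s_n}-1\big)^2$; you state this explicitly, while the paper leaves it implicit in its final display.
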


\begin{proof}
Suppose $\ell_\ast (\cA) \ge 2$. Then, by Lemma \ref{lem:ac01}, for every $n\in\Z_{\ge0}$, we get
\[
\int_n^{n+2} \mu(x)dx \int_n^{n+2}\frac{dx}{\mu(x)} - 4 = \frac{1}{2}  \sum_{s \in \cM} \sum_{\xi \neq s}  |\cI_s^n||\cI_\xi^n| \frac{(s - \xi)^2}{s \xi} \le  \sum_{s \in \cM_n} \sum_{\xi \neq s} |\cI_\xi^n|\frac{(s - \xi)^2}{s \xi},
\]
where $\mathcal M_n :=  \mu\big((n,n+2)\big) = \{s_k s_{k+1} \colon (n, n+2) \cap I_k \neq \varnothing \}$. Since $\ell_k \ge 2$ for all $k\ge0$ by assumption, $\mu$ is either constant on $(n, n+2)$ or attains precisely two different values. In the first case, the righthand side is equal to zero. In the second, we obviously get the estimate
	\[
	\int_n^{n+2} \mu(x)dx \int_n^{n+2}\frac{dx}{\mu(x)} - 4\le 	
	2\sum_{t_k \in (n, n+2)}\frac{( s_{k+1} - s_{k-1})^2   }{s_{k-1} s_{k+1}}.
	\]
Thus we end up with the  following bound
\begin{align*}
\sum_{n\ge 0} \left(\int_n^{n+2} \mu(x)dx \int_n^{n+2}\frac{dx}{\mu(x)} - 4\right) & \le 
      2  \sum_{n\ge 0}\sum_{t_k \in (n, n+2)}\frac{( s_{k+1} - s_{k-1})^2   }{s_{k-1} s_{k+1}} \\ 
      & \le  4  \sum_{n\ge 0}\frac{( s_{n+2} - s_{n})^2   }{s_{n} s_{n+2}} <\infty,
\end{align*}
which proves the claim by applying Theorem \ref{th:ac01}. 

It remains to note that the general case $\ell_\ast (\cA) >0$ can be reduced to the one with $\ell_\ast (\cA) \ge 2$ by using the standard scaling argument (see also Remark \ref{rem:bessden}). 
	\end{proof}

In fact, one can extend the above result to the case when lengths do not admit a strictly positive lower bound. However, in this case one has to modify \eqref{eq:remL1} in an appropriate way.

\begin{lemma}\label{lem:ac05}
Let $\cA$ be an infinite radially symmetric antitree with $\cL=\infty$. 
Also, let $\ell_n \le 1$ for all $n\ge 0$ and $\ell_n = o(1)$ as $n\to \infty$. If $\{s_n\}_{n\ge 0}$ is a nondecreasing sequence such that
 \be\label{eq:remL!mod}
 \sum_{n\ge 0} \left(\frac{s_{m(n+2)}}{s_{m(n)}}-1 \right)^2 <\infty,
 \ee
 then $\sigma_{\ac}(\bH) = \R_{\ge 0}$. 
 
 Here for each $n\in\Z_{\ge0}$ the natural number $m(n)$ is defined by
 \be\label{eq:def_mn}
t_{m(n)} \le n < t_{m(n)+1}, \qquad t_n = \sum_{k=0}^{n-1} \ell_k. 
\ee
 \end{lemma}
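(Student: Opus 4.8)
The plan is to reduce the claim to Theorem~\ref{th:ac01} by verifying its hypothesis \eqref{eq:ac01} for the windows $(n,n+2)$. Since $\{s_n\}_{n\ge0}$ is nondecreasing, so is $\{s_ns_{n+1}\}_{n\ge0}$, and hence the weight $\mu$ in \eqref{eq:muL} is a nondecreasing step function. Applying Lemma~\ref{lem:ac01}, it suffices to prove that the right-hand side of \eqref{eq:lemac01} is finite. First I would localise the values of $\mu$ on each window: because $n\in I_{m(n)}$ and $\mu$ is nondecreasing, every value $s$ with $|\cI_s^n|>0$ obeys $\mu_{\min}^{(n)}\le s\le\mu_{\max}^{(n)}$, where $\mu_{\min}^{(n)}=s_{m(n)}s_{m(n)+1}$, and since the largest interval $I_k$ meeting $(n,n+2)$ has index $k\le m(n+2)$, also $\mu_{\max}^{(n)}\le s_{m(n+2)}s_{m(n+2)+1}$.

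For two such values the identity $\frac{(s-\xi)^2}{s\xi}=\frac{s}{\xi}+\frac{\xi}{s}-2$ together with $s,\xi\in[\mu_{\min}^{(n)},\mu_{\max}^{(n)}]$ gives $\frac{(s-\xi)^2}{s\xi}\le (R_n-1)^2$ with $R_n:=\mu_{\max}^{(n)}/\mu_{\min}^{(n)}$, while \eqref{eq:sumIn} yields $\sum_{s}\sum_{\xi\ne s}|\cI_s^n||\cI_\xi^n|=4-\sum_s|\cI_s^n|^2\le4$. Hence each summand in \eqref{eq:lemac01} is at most $2(R_n-1)^2$, and the whole problem is reduced to showing $\sum_{n\ge0}(R_n-1)^2<\infty$. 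By the two bounds on $\mu_{\min}^{(n)},\mu_{\max}^{(n)}$ we have $R_n\le\rho_n\rho_n'$ with $\rho_n:=s_{m(n+2)}/s_{m(n)}$ and $\rho_n':=s_{m(n+2)+1}/s_{m(n)+1}$.

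The main obstacle is the extra factor $\rho_n'$, which involves the shifted indices $m(n)+1$ and $m(n+2)+1$ rather than those appearing in \eqref{eq:remL!mod}. Here the hypothesis $\ell_n\le1$ enters decisively: from $t_{m(n+2)}\le n+2$ and $\ell_{m(n+2)}\le1$ one gets $t_{m(n+2)+1}\le n+3$, whence $m(n+3)\ge m(n+2)+1$. Combining this with the monotonicity of $\{s_n\}$ and with $m(n+1)\le m(n+2)$ gives
\[
\rho_n'=\frac{s_{m(n+2)+1}}{s_{m(n)+1}}\le\frac{s_{m(n+3)}}{s_{m(n)}}=\frac{s_{m(n+3)}}{s_{m(n+1)}}\cdot\frac{s_{m(n+1)}}{s_{m(n)}}\le\rho_{n+1}\rho_n ,
\]
so that $R_n\le\rho_n^2\rho_{n+1}$. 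Since $\sum_n(\rho_n-1)^2<\infty$ forces $\rho_n\to1$, for all large $n$ the factors $\rho_n,\rho_{n+1}$ are bounded, and the elementary estimate $\rho_n^2\rho_{n+1}-1\le C\big((\rho_n-1)+(\rho_{n+1}-1)\big)$ then yields $(R_n-1)^2\le C'\big((\rho_n-1)^2+(\rho_{n+1}-1)^2\big)$. Summing over $n$ and invoking \eqref{eq:remL!mod} gives $\sum_n(R_n-1)^2<\infty$, the finitely many initial terms being harmless. Feeding this back into \eqref{eq:ac01} and applying Theorem~\ref{th:ac01} gives $\sigma_{\ac}(\bH)=\R_{\ge0}$. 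The assumptions $\ell_n\le1$ and $\ell_n=o(1)$ describe exactly the shrinking-length regime complementary to Theorem~\ref{cor:ac01}; in the argument above it is the bound $\ell_n\le1$ that carries the quantitative weight, through the index comparison $m(n+3)\ge m(n+2)+1$.
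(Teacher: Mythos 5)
Your proof is correct, and it follows the paper's high-level strategy (verify the Szeg\H{o}-type criterion of Theorem~\ref{th:ac01}) but differs in the key technical step. The paper does \emph{not} work with the fixed windows $(n,n+2)$: it introduces the adapted windows $\cI_n=(t_{m(n)},t_{m(n+2)+1})$, which are aligned with the jumps of $\mu$, and for these it must invoke the strengthened criterion of Remark~\ref{rem:bessden} (the modification of the Bessonov--Denisov theorem for windows that only ``asymptotically'' behave like $(n,n+2)$, justified by reference to a private communication); this is precisely where both hypotheses $\ell_n\le 1$ and $\ell_n=o(1)$ enter, since they guarantee $\sup_n|\cI_n|<\infty$ and $|\cI_n\setminus(n,n+2)|=o(1)$. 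The paper then expands the integral over $\cI_n$ as an explicit double sum $2\sum_{m(n)\le k<j\le m(n+2)}\ell_k\ell_j\,(s_js_{j+1}-s_ks_{k+1})^2/(s_ks_{k+1}s_js_{j+1})$ and bounds it, somewhat tersely (a ``$\lesssim$'' absorbs exactly the shifted-index issue you isolate), by $(s_{m(n+2)}/s_{m(n)}-1)^2$. You instead stay with the original windows and combine Lemma~\ref{lem:ac01} with the monotonicity of $\mu$ to trap all values of $\mu$ on $(n,n+2)$ in $[s_{m(n)}s_{m(n)+1},\,s_{m(n+2)}s_{m(n+2)+1}]$, and you resolve the shifted-index problem explicitly through $m(n+3)\ge m(n+2)+1$ (a consequence of $\ell_n\le1$) together with the elementary bound $\rho_n^2\rho_{n+1}-1\le C\big((\rho_n-1)+(\rho_{n+1}-1)\big)$. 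Your route buys two things: it uses only results fully stated and proved in the paper (Theorem~\ref{th:ac01} plus Lemma~\ref{lem:ac01}), avoiding the unproven Remark~\ref{rem:bessden} altogether, and it never uses $\ell_n=o(1)$, so it actually establishes the lemma under the weaker hypotheses $\ell_n\le1$ and \eqref{eq:remL!mod} alone -- consistent with the paper's own remark that these length assumptions are ``superfluous.'' What the paper's adapted windows buy is an exact combinatorial expansion of the window integrals, which avoids your localisation step and would remain usable in situations where the crude two-sided trapping of $\mu$ on a fixed window is too lossy.
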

 
 \begin{proof}
 Set $\cI_n:= (t_{m(n)},t_{m(n+2)+1})$, $n\ge 0$. By construction $(n,n+2)\subseteq \cI_n$ for all $n\ge 0$ and $|\cI_n\setminus (n,n+2)| =o(1)$ as $n\to \infty$. 
Thus, by Theorem \ref{th:ac01} and Remark \ref{rem:bessden}, it suffices to show that 
\be
\sum_{n\ge 0} \underbrace{\left(\int_{t_{m(n)}}^{t_{m(n+2)+1}} \mu(x)dx\int_{t_{m(n)}}^{t_{m(n+2)+1}} \frac{dx}{\mu(x)} - (t_{m(n+2)+1} - t_{m(n)})^2\right)}_{=:R_n} <\infty.
\ee 
Since $\mu$ is given by \eqref{eq:muL}, we get
 \begin{align*}
 R_n=\sum_{k=m(n)}^{m(n+2)} &s_ks_{k+1}\ell_k \sum_{k=m(n)}^{m(n+2)} \frac{\ell_k}{s_ks_{k+1}} - \Big(\sum_{k=m(n)}^{m(n+2)} \ell_k\Big)^2  
  = \sum_{k,j=m(n)}^{m(n+2)} \ell_k\ell_j \Big(\frac{s_js_{j+1}}{s_ks_{k+1}}-1\Big) \\
& =2 \sum_{m(n)\le k < j \le m(n+2)} \ell_k\ell_j \frac{(s_js_{j+1} - s_ks_{k+1})^2}{s_ks_{k+1}s_js_{j+1}} \\
& \le 2  \sum_{m(n)\le k < j \le m(n+2)} \ell_k\ell_j \frac{(s_{m(n+2)+1}^2 - s_{m(n)}^2)^2}{s_{m(n)}^4} \\
& \lesssim 2 \sup_{k\ge 0}|\cI_k|^2 \left(\frac{s_{m(n+2)}^2}{s_{m(n)}^2}-1 \right)^2 \lesssim  \left(\frac{s_{m(n+2)}}{s_{m(n)}}-1 \right)^2
\end{align*}
for all $n\ge 0$ if $\frac{s_{m(n+2)}}{s_{m(n)}} = 1+o(1)$. 
 \end{proof}

\begin{remark}
In fact, the assumptions on lengths that $\ell_n \le 1$ for all $n\ge 0$ and $\ell_n = o(1)$ as $n\to \infty$ as well as monotonicity of sphere numbers are superfluous and we need them for simplicity only. Of course, one can considerably weaken them, however, the analysis becomes more involved and cumbersome.
\end{remark}

We finish this section with another result based on \cite{ek18}, which also allows to construct antitrees with absolutely continuous spectrum supported on $\R_{\ge 0}$.

\begin{theorem}\label{th:ac02}
Let $\cA$ be an infinite radially symmetric antitree such that $\vol(\cA)=\infty$ and $\cL_\mu=\infty$.
If there are constants $a\in \R$ and $b\in \R_{> 0}$ such that
\be\label{eq:ac02}
\int_0^\cL \frac{1}{\mu(x)}\Big| \int_0^{x} \Big(\mu(s) - \frac{b}{\mu(s)}\Big)ds - a \Big|^2 dx<\infty,
\ee
where $\mu$ is given by \eqref{eq:muL}, then $\sigma_{\ac}(\bH) = \R_{\ge 0}$.
\end{theorem}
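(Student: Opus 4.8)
The plan is to reduce the claim to the operator $\rH$ and then to feed the Szegő-type theorem of \cite{ek18} with the hypothesis \eqref{eq:ac02}, suitably rewritten. First I would invoke the decomposition \eqref{eq:Hdecomp}: since the remaining summands $\rh_n$ and $\wt{\rh}_n$ are regular Sturm--Liouville operators and hence have discrete spectra, one has $\sigma_{\ac}(\bH)=\sigma_{\ac}(\rH)$ (this is already recorded at the beginning of the present section), so it suffices to show $\sigma_{\ac}(\rH)=\R_{\ge0}$. As in the proof of Theorem \ref{th:SpEst}, $\rH$ is unitarily equivalent to the string operator $\wt{\rH}$ of \eqref{eq:wtH}, i.e.\ to the Krein string on $[0,\cL_\mu)$ with mass density $\wt\mu=\mu^2\circ g^{-1}$, which is in particular a generalized indefinite string. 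Under the standing assumptions $\vol(\cA)=\infty$ and $\cL_\mu=\infty$ this string has both infinite length and infinite total mass, since $\int_0^{\cL_\mu}\wt\mu\,dy=\int_0^\cL\mu\,dt=\vol(\cA)=\infty$; thus it lies in the half-line class of strings to which the results of \cite{ek18} apply.

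The heart of the matter is to recognize \eqref{eq:ac02} as the Szegő condition of \cite{ek18}, which compares the cumulative mass of the string with an affine reference in a weighted $L^2$ sense. Recall from \eqref{eq:mu_g}--\eqref{eq:g_def} that $g(x)=\int_0^x\frac{ds}{\mu(s)}$ maps $[0,\cL)$ bijectively and locally absolutely continuously onto $[0,\cL_\mu)$, and let $M(y)=\int_0^y\wt\mu\,ds=\int_0^{g^{-1}(y)}\mu\,ds$ be the mass function of \eqref{eq:defM}. Since the expression inside the modulus in \eqref{eq:ac02} equals $\int_0^x\mu\,ds-b\,g(x)-a=M(g(x))-b\,g(x)-a$, while $\frac{dx}{\mu(x)}=dg(x)$, the change of variables $y=g(x)$ turns \eqref{eq:ac02} into
\[
\int_0^{\cL_\mu}\big|M(y)-b\,y-a\big|^2\,dy<\infty .
\]
In words, \eqref{eq:ac02} says precisely that the cumulative mass $M$ is an $L^2$-perturbation of the affine function $y\mapsto b\,y+a$ of slope $b>0$, that is, that $\wt{\rH}$ is, in the Szegő sense, a small perturbation of the free string of constant density $b$. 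I would then apply the Szegő-type theorem of \cite{ek18} to conclude $\sigma_{\ac}(\wt{\rH})=\R_{\ge0}$, whence $\sigma_{\ac}(\rH)=\sigma_{\ac}(\bH)=\R_{\ge0}$.

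The hard part will be the matching step in the previous paragraph, i.e.\ verifying that the Szegő functional of \cite{ek18}, expressed in the string's natural (Krein) coordinate $y$, really coincides with $\int_0^{\cL_\mu}|M(y)-b\,y-a|^2\,dy$, including the exact normalization and the interpretation of the two free parameters: $b>0$ as the asymptotic density (slope) of the free reference string and $a\in\R$ as its intercept, over both of which the hypothesis is allowed to range. The analytic ingredients themselves are routine and already available in the excerpt: the map $g$ and the identity $M\circ g=\int_0^\cdot\mu\,ds$ were used in \eqref{eq:mu_g}--\eqref{eq:g_def} and in the proof of Theorem \ref{th:SpEst}, and the transcription of \eqref{eq:ac02} above is a one-line substitution. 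What requires care is aligning the precise form of the hypothesis in \cite{ek18} with \eqref{eq:ac02} and confirming that the resulting conclusion is full absolutely continuous spectrum on all of $\R_{\ge0}$ (with multiplicity one), rather than merely on a proper subset.
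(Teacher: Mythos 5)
Your proposal is correct and follows essentially the same route as the paper: reduce to $\rH$ via \eqref{eq:Hdecomp}, pass to the Krein string $\wt{\rH}$ of \eqref{eq:wtH}, and apply Theorem 3.1 of \cite{ek18}, whose hypothesis $\int_0^\infty|M(y)-by-a|^2\,dy<\infty$ becomes \eqref{eq:ac02} under the substitution $y=g(x)$. The change of variables you carry out explicitly is precisely the ``straightforward calculations'' the paper leaves to the reader, and your identification of the Szeg\H{o}-type condition (slope $b>0$, intercept $a$) is exactly the form used there, so the matching step you flag as delicate goes through as you describe.
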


\begin{proof}
As in the proof of Theorem \ref{th:ac01}, we know that the operator $\rH$ is unitarily equivalent to the operator $\wt\rH$.
By Theorem 3.1 from \cite{ek18}, $\sigma_{\ac}(\wt\rH) = [0,\infty)$ if there are constants $a\in \R$ and $b\in \R_{> 0}$ such that
\[
\int_0^\infty \left| M(x) - a - b x\right|^2 dx  < \infty,
\]
where $M$ is defined by \eqref{eq:defM}. 
Straightforward calculations finish the proof.
\end{proof}

\begin{remark}
For a string operator defined by \eqref{eq:wtH}, Theorem \ref{th:ac01} and Theorem \ref{th:ac02} also imply that the entropy, respectively, some sort of relative entropy of the corresponding spectral measure is finite (see \cite{bd17} for details). However, the meaning of this fact for the corresponding quantum graph operator $\bH$ is unclear to us.
\end{remark}

\section{Examples}\label{sec:Example}

\subsection{Exponentially growing antitrees}\label{ss:expAT}
Consider an exponentially growing antitree from \cite[Example 8.6]{kn19}. Namely, fix $\beta \in \Z_{\ge 2}$ and let $\cA_\beta$ be the antitree with sphere numbers $s_n = \beta^n$, $n\ge 0$. Suppose that $\{\ell_n\}_{n\ge 0}$ are the lengths. Notice that
\be\label{eq:ex1sa}
\vol(\cA_\beta) = \sum_{n\ge 0} \beta^{2n+1}\ell_n.
\ee
Then the basic spectral properties of the corresponding quantum graph operator are contained in the following proposition.

\begin{proposition}\label{prop:Hbeta}
Let $\bH^\beta$ be the quantum graph operator associated with the antitree $\cA_\beta$. Then:
\begin{itemize}
\item[(i)] The operator $\bH^\beta$ is self-adjoint if and only if the series in \eqref{eq:ex1sa} diverges. 
\item[(ii)] If $\vol(\cA_\beta)<\infty$, then deficiency indices of $\bH^\beta$ are equal to $1$. Moreover,  the spectra of self-adjoint extensions of $\bH^\beta$ are purely discrete and  eigenvalues admit the standard Weyl asymptotic \eqref{eq:WeylLaw}. 
\end{itemize}
Assume in addition that $\vol(\cA_\beta)=\infty$.
\begin{itemize}
\item[(iii)] The spectrum of $\bH^\beta$ is purely discrete if and only if $\ell_n = o(1)$ as $n\to \infty$. 
\item[(iv)] The resolvent of $\bH^\beta$ belongs to the trace class if and only if 
\begin{align}\label{eq:trace_beta}
\sum_{n\ge 0}\beta^{2n}\ell_n^2 < \infty.
\end{align}
\item[(v)] $\bH^\beta$ is positive definite if and only if $\ell^\ast(\cA_\beta) <\infty$. Moreover, in this case
\begin{align}
\frac{1}{4C} \le & \lambda_0(\bH^\beta) \le \frac{1}{C}, & \frac{1}{4C_{\ess}} \le & \lambda_0^{\ess}(\bH^\beta) \le \frac{1}{C_{\ess}},
\end{align}
where 
\be\label{eq:Cbeta}
\sup_{n\ge 0}\sum_{k=0}^n \beta^{2k}\ell_k \sum_{k\ge n+1} \frac{\ell_k}{\beta^{2k}} \le C \le \sup_{n\ge 0} \sum_{k=0}^n \beta^{2k}\ell_k \sum_{k\ge n} \frac{\ell_k}{\beta^{2k}},
\ee
and
\be\label{eq:Cessbeta}
\lim_{m\to \infty} \sup_{n\ge m}\sum_{k=m}^n \beta^{2k}\ell_k \sum_{k\ge n+1} \frac{\ell_k}{\beta^{2k}} \le C_{\ess} \le \lim_{m\to \infty} \sup_{n\ge m}\sum_{k=m}^n \beta^{2k}\ell_k \sum_{k\ge n} \frac{\ell_k}{\beta^{2k}}.
\ee
\end{itemize}
\end{proposition}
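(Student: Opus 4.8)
The plan is to obtain all five assertions by specializing the general results of Sections \ref{sec:sa}--\ref{sec:specest} to the sphere numbers $s_n=\beta^n$, for which $s_ns_{n+1}=\beta^{2n+1}$ and $1/(s_ns_{n+1})=\beta^{-(2n+1)}$. The recurring mechanism, which I would isolate once at the start, is that these weights grow (resp.\ decay) geometrically, so that every double-indexed sum appearing in the criteria is controlled, up to a $\beta$-dependent constant, by its dominant term; this is exactly what collapses the multi-condition criteria into the single advertised conditions. Parts (i) and (ii) then require no computation: they are verbatim restatements of Theorem \ref{th:SA} and Corollary \ref{cor:finvol}, once one notes that $\vol(\cA_\beta)=\sum_n\beta^{2n+1}\ell_n$ is precisely the series in \eqref{eq:ex1sa}.

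For (iii) I would invoke Theorem \ref{th:discr} together with Remark \ref{rem:discr}, which reduces discreteness to the single limit condition \eqref{eq:KK07}. Necessity of $\ell_n=o(1)$ is immediate from condition (i) there. For sufficiency, write $A_n=\sum_{k\le n}\beta^{2k+1}\ell_k$ and $B_n=\sum_{k\ge n}\beta^{-(2k+1)}\ell_k$, and set $L=\sup_k\ell_k<\infty$ and $\epsilon_n=\sup_{k\ge n}\ell_k\to0$. Summing the two geometric series gives $A_n\le L\beta^{2n+3}/(\beta^2-1)$ and $B_n\le\epsilon_n\beta^{1-2n}/(\beta^2-1)$, whence $A_nB_n\le L\beta^4\epsilon_n/(\beta^2-1)^2\to0$; since $\cL_\mu=\sum_n\beta^{-(2n+1)}\ell_n<\infty$ is automatic for bounded $\ell$, all three conditions of Theorem \ref{th:discr} hold.

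For (iv) the plan is to apply Theorem \ref{th:trace}. Condition \eqref{eq:trace01} becomes $\sum_n\beta^{2n+1}\ell_n^2<\infty$, i.e.\ \eqref{eq:trace_beta}, so the only real task is to show that \eqref{eq:KK08} is automatically implied by \eqref{eq:trace_beta}. Setting $a_k:=\beta^k\ell_k$, so that \eqref{eq:trace_beta} reads $\sum_k a_k^2<\infty$, the left-hand side of \eqref{eq:KK08} rewrites as $\sum_n\sum_{k<n}a_na_k\,\beta^{k-3n}$; bounding $a_na_k\le\tfrac12(a_n^2+a_k^2)$ and summing the resulting geometric series (in $n$ for the first piece, and after interchanging the order of summation for the second) bounds it by a constant multiple of $\sum_k a_k^2<\infty$. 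Hence the trace-class property is equivalent to \eqref{eq:trace_beta} alone. This implication is the main obstacle of the whole proposition: unlike every other part, it is a genuine (if short) summation estimate rather than a one-line specialization, and it is the single place where one must verify that the auxiliary condition of the general criterion is redundant for geometric $s_n$.

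For (v), since $\vol(\cA_\beta)=\infty$ the value $0$ is not an eigenvalue, so positivity of $\lambda_0(\bH^\beta)$ is equivalent to that of $\lambda_0^{\ess}(\bH^\beta)$, and I would read the criterion off Theorem \ref{th:SpEst} and Remark \ref{rem:SpEst01}. Necessity of $\ell^\ast(\cA_\beta)<\infty$ is condition (i) there, while the same geometric-series estimate as in (iii) shows that if $\ell^\ast=L<\infty$ then the supremum in \eqref{eq:Iso01} is bounded by $L^2\beta^4/(\beta^2-1)^2$, so $\lambda_0(\bH^\beta)>0$. The two-sided bounds \eqref{eq:Cbeta} and \eqref{eq:Cessbeta} then follow by substituting $s_ks_{k+1}=\beta^{2k+1}$ into \eqref{eq:estKK01}--\eqref{eq:Iso02} and into the essential-spectrum estimate of Theorem \ref{th:SpEstEss} (the latter handled by the same discretization of $\Phi$ over the intervals $[t_n,t_{n+1})$ as in the proof of Theorem \ref{th:discr}, now applied to the tail supremum defining $C_{\ess}(\cL)$); I would finally note that the prefactors $\beta$ and $\beta^{-1}$ produced by the two inner sums cancel, leaving exactly the $\beta^{2k}$ weights displayed in \eqref{eq:Cbeta} and \eqref{eq:Cessbeta}.
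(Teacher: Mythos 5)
Your proposal is correct and follows essentially the same route as the paper: parts (i)--(ii) are read off Theorem \ref{th:SA} and Corollary \ref{cor:finvol}, while (iii)--(v) specialize Theorems \ref{th:discr}, \ref{th:trace}, \ref{th:SpEst}, \ref{th:SpEstEss} together with Remarks \ref{rem:discr} and \ref{rem:SpEst01} to $s_n=\beta^n$, using exactly the same geometric-series estimates to show the auxiliary conditions collapse. The only cosmetic difference is in (iv), where the paper deduces \eqref{eq:KK08} from \eqref{eq:trace_beta} via Cauchy--Schwarz, whereas you use $a_na_k\le\tfrac12(a_n^2+a_k^2)$ and an interchange of summation; both yield the same redundancy of \eqref{eq:KK08}.
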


\begin{proof}
Items (i) and (ii) follow from Theorem \ref{th:SA} and Corollary \ref{cor:finvol}. 

(iii) Applying Theorem \ref{th:discr} (see also Remark \ref{rem:discr}), we only need to show that $\ell_n = o(1)$ as $n\to \infty$ is sufficient for the discreteness. 
Indeed, we can estimate 
\begin{align}\label{eq:estimate_beta}
\begin{split}
\sum_{k=0}^n &\beta^{2k}\ell_k \sum_{k\ge n} \frac{\ell_k}{\beta^{2k}} 
 \le \ell^\ast(\cA_\beta)\sup_{k\ge n}\ell_k \sum_{k=0}^n \beta^{2k} \sum_{k\ge n} \frac{1}{\beta^{2k}} \\
& = \ell^\ast(\cA_\beta)\sup_{k\ge n}\ell_k \frac{\beta^{2n+2} - 1}{\beta^{2n+2}} \Big(\frac{\beta^2}{\beta^2-1}\Big)^2
< \frac{\ell^\ast(\cA_\beta)}{(1-\beta^{-2})^2}\,\sup_{k\ge n}\ell_k,
\end{split}
\end{align}
where $ \ell^\ast(\cA_\beta) = \sup_{n\ge0}\ell_n$. 
Hence \eqref{eq:KK07} is satisfied if $\ell_n= o(1)$. 

(iv) Clearly, \eqref{eq:trace_beta} coincides with condition (i) of Theorem \ref{th:trace} and hence it is necessary. 
Applying the Cauchy--Schwarz inequality, we get the following estimate:
\begin{align*}
	 \sum_{n\ge 0}\frac{\ell_n}{s_ns_{n+1}} & \sum_{k=0}^{n-1}s_ks_{k+1}\ell_k  = \sum_{n \ge 0} \frac{\ell_n}{\beta^{2n}} \sum_{k=0}^{n-1} \beta^{2k} \ell_k 	\\
	 & \le \sum_{n \ge 0} \frac{\ell_n}{\beta^{2n}} \Big ( \sum_{k = 0}^{n-1} \beta^{2k} \ell_k^2   \sum_{k = 0}^{n-1} \beta^{2k} \Big)^{1/2}   
	  = \sum_{n \ge 0} \frac{\ell_n}{\beta^{2n}} \Big( \frac{\beta^{2n}-1}{\beta^2-1}\sum_{k = 0}^{n-1} \beta^{2k} \ell_k^2\Big)^{1/2}   \\
	& <  \sum_{n \ge 0} \frac{\ell_n}{\beta^{n}} \Big(\sum_{k = 0}^{n-1} \beta^{2k} \ell_k^2\Big)^{1/2} 
	< \frac{\ell^\ast(\cA_\beta)}{\beta-1} \Big(\sum_{k \ge 0} \beta^{2k} \ell_k^2\Big)^{1/2}.
\end{align*}
Therefore, \eqref{eq:trace_beta} implies condition (ii) of Theorem \ref{th:trace}, which proves the claim.

(v) immediately follows from \eqref{eq:estimate_beta}, Theorems \ref{th:SpEst} and \ref{th:SpEstEss} and Remark \ref{rem:SpEst01}.
\end{proof}

\begin{remark}
\begin{itemize}
\item[(i)] 
Both the discreteness and uniform positivity criteria for $\bH^\beta$ were obtained in \cite[Example 8.6]{kn19}. Notice that these results are a consequence of the positivity of the combinatorial isoperimetric constant in this case (see \cite{kn19}).
Moreover, using the rough estimate \eqref{eq:estimate_beta}, one would be able to recover the lower bounds (8.9) and (8.10) from \cite{kn19}.
\item[(ii)]  
It is impossible to apply Theorem \ref{th:ac01} and Theorem \ref{th:ac02} to $\cA_\beta$ (this either can be seen from Proposition \ref{prop:Hbeta}(v) or one can prove that both conditions \eqref{eq:ac01} and \eqref{eq:ac02} are always violated if sphere numbers grow exponentially). 
\item[(iii)] 
Since the sphere numbers of $\cA_\beta$ satisfy
\[
\frac{s_{n+2}}{s_{n}} = \beta^2
\]
for all $n\ge 0$, we can apply the results of Section \ref{sec:SCspec}. Namely, under the additional assumption $\ell_\ast(\cA_\beta)>0$, we conclude that the absolutely continuous spectrum of $\bH$ is in general empty. In particular, it is always the case if $\ell^\ast(\cA_\beta) = \infty$ (Theorem \ref{th:rem01}). Moreover, assuming that $\{\ell_n\}_{n\ge 0}$ is a finite set, by Theorem \ref{th:rem02}, $\sigma_{\ac}(\bH)\neq \emptyset$ would imply that the sequence $\{\ell_n\}_{n\ge 0}$ is eventually periodic.  
\item[(iv)] 
Notice that the isoperimetric constant is given by (see \eqref{eq:alpha}) 
\[
\frac{1}{\alpha(\cA_\beta)} 
= \sup_{n\ge 0}\frac{1}{\beta^{2n}}{\sum_{k=0}^n \beta^{2k}\ell_k}.
\]
\end{itemize}
\end{remark}

\subsection{Polynomially growing antitrees} \label{ss:polAT}
Fix $q \in \Z_{\ge 1}$ and let $\cA^q$ be the antitree with sphere numbers $s_n = (n+1)^q$, $n\ge 0$ (the case $q=1$ is depicted in Figure \ref{fig:antitree}). Suppose that $\{\ell_n\}_{n\ge 0}$ are the lengths. Notice that
\be\label{eq:ex2sa}
\vol(\cA^q) = \sum_{n\ge 0} (n+1)^q(n+2)^q \ell_n.
\ee
Then the basic spectral properties of the corresponding quantum graph operator are contained in the following proposition.

\begin{proposition}\label{prop:Hq}
Let $\bH^q$ be the quantum graph operator associated with the antitree $\cA^q$. Then:
\begin{itemize}
\item[(i)] The operator $\bH^q$ is self-adjoint if and only if 
\be\label{eq:finvol_q}
\sum_{n\ge 0} n^{2q}\ell_n = \infty.
\ee
\item[(ii)] If the series in \eqref{eq:finvol_q} converges, then deficiency indices of $\bH^q$ are equal to $1$. Moreover,  the spectra of self-adjoint extensions of $\bH^q$ are purely discrete and  eigenvalues admit the standard Weyl asymptotic \eqref{eq:WeylLaw}. 
\end{itemize}
Assume in addition that \eqref{eq:finvol_q} is satisfied, that is, $\bH^q$ is self-adjoint.
\begin{itemize}
\item[(iii)] The spectrum of $\bH^q$ is purely discrete if and only if 
\be\label{eq:Hq_discr}
\lim_{n\to \infty}\sum_{k=0}^n k^{2q}\ell_k \sum_{k\ge n} \frac{\ell_k}{k^{2q}} = 0.
\ee
In particular, the spectrum is purely discrete if $\ell_n = o(n^{-1})$ as $n\to \infty$.
\item[(iv)] The resolvent of $\bH^q$ belongs to the trace class if and only if 
\begin{align}\label{eq:trace_q}
\sum_{n\ge 0}n^{2q}\ell_n^2 < \infty.
\end{align}
\item[(v)] $\bH^q$ is positive definite if and only if
\be\label{eq:Hq_positive}
\sup_{n\ge 1}\sum_{k=0}^n k^{2q}\ell_k \sum_{k\ge n} \frac{\ell_k}{k^{2q}} <\infty.
\ee
In particular, $\lambda_0(\bH^q)>0$ if $\ell_n = \OO(n^{-1})$ as $n\to \infty$. 
\item[(vi)] If $\ell_\ast(\cA^q) >0$, then $\sigma_{\ac}(\bH^q) = \R_{\ge 0}$.
\end{itemize}
\end{proposition}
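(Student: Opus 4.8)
The plan is to obtain all six items by specializing the general results of the preceding sections to the sphere numbers $s_n = (n+1)^q$, exactly as was done for $\cA_\beta$ in Proposition~\ref{prop:Hbeta}. The single observation that organizes the whole argument is the two-sided comparison
\[
(n+1)^{2q} \le s_n s_{n+1} = (n+1)^q (n+2)^q \le (n+2)^{2q}, \qquad n\ge 0,
\]
so that $s_n s_{n+1}$ is comparable to $n^{2q}$ up to multiplicative constants. This lets me replace $s_n s_{n+1}$ by $n^{2q}$ inside every series criterion appearing in Theorems~\ref{th:SA}, \ref{th:discr}, \ref{th:trace}, \ref{th:SpEst} and \ref{th:SpEstEss} without affecting convergence or boundedness. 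Items (i) and (ii) are then immediate: by the comparison, $\vol(\cA^q) = \sum_n s_n s_{n+1}\ell_n$ diverges if and only if \eqref{eq:finvol_q} holds, so Theorem~\ref{th:SA} gives the self-adjointness dichotomy and the deficiency-index computation, and Corollary~\ref{cor:finvol} yields discreteness and the Weyl asymptotics in the finite-volume case.

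For (iii) I would invoke Theorem~\ref{th:discr}: by Remark~\ref{rem:discr} only condition \eqref{eq:KK07} needs checking, and the comparison turns it into \eqref{eq:Hq_discr}. The sufficiency of $\ell_n = o(n^{-1})$ then follows from a direct estimate: writing $\ell_n = \delta_n/n$ with $\delta_n \to 0$ and splitting $\sum_{k=0}^n k^{2q}\ell_k$ at an index beyond which $\delta_k < \varepsilon$, one bounds $\sum_{k=0}^n k^{2q}\ell_k \lesssim C_N + \varepsilon\, n^{2q}$ and $\sum_{k\ge n}\ell_k/k^{2q} \lesssim \varepsilon\, n^{-2q}$, whose product tends to $O(\varepsilon^2)$. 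Item (v) is analogous using Theorem~\ref{th:SpEst} in the reformulation \eqref{eq:Iso01} of Remark~\ref{rem:SpEst01}, the comparison giving \eqref{eq:Hq_positive}; the sufficiency of $\ell_n = \OO(n^{-1})$ follows from the same bound, now with $\delta_n$ merely bounded rather than vanishing, so that the product stays bounded instead of tending to zero.

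For (iv) I would apply Theorem~\ref{th:trace}. Its first condition \eqref{eq:trace01} is exactly \eqref{eq:trace_q} after the comparison, so it remains to show that \eqref{eq:trace_q} forces \eqref{eq:KK08}. As in Proposition~\ref{prop:Hbeta}(iv), this is a two-fold Cauchy--Schwarz argument: bounding the inner sum by $\sum_{k<n} k^{2q}\ell_k \le (\sum_k k^{2q}\ell_k^2)^{1/2}(\sum_{k<n}k^{2q})^{1/2} \lesssim (\sum_k k^{2q}\ell_k^2)^{1/2}\, n^{q+1/2}$ reduces \eqref{eq:KK08} to the convergence of $\sum_n \ell_n n^{1/2-q}$, and a second Cauchy--Schwarz estimates this by $(\sum_n \ell_n^2 n^{2q})^{1/2}(\sum_n n^{1-4q})^{1/2}$, where $\sum_n n^{1-4q}<\infty$ precisely because $q\ge 1$. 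Finally, (vi) is the point of the proposition and is where the hypothesis $\ell_\ast(\cA^q)>0$ enters: I would verify that $s_n = (n+1)^q$ satisfies \eqref{eq:remL1} and apply Theorem~\ref{cor:ac01}. Since $s_{n+2}/s_n = (1 + 2/(n+1))^q$ and the map $x\mapsto x^q$ has derivative at most $q\,3^{q-1}$ on $[1,3]$, one gets $s_{n+2}/s_n - 1 \le 2q\,3^{q-1}/(n+1)$, hence $\sum_n (s_{n+2}/s_n-1)^2 \le (2q\,3^{q-1})^2\sum_n (n+1)^{-2} < \infty$; Theorem~\ref{cor:ac01} then delivers $\sigma_{\ac}(\bH^q) = \R_{\ge 0}$.

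The main obstacle is not conceptual but bookkeeping: the genuinely nontrivial estimates are the two double-sum asymptotics in (iii) and (v) and the double Cauchy--Schwarz in (iv), where one must track that the polynomial exponents combine correctly—in particular that $\sum_n n^{1-4q}$ converges, which is exactly the place where $q\ge 1$ is used. Item (vi), by contrast, reduces to the one-line verification of \eqref{eq:remL1} once the elementary bound on $(n+3)^q/(n+1)^q - 1$ is in hand.
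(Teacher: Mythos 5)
Your proposal is correct and follows essentially the same route as the paper: items (i)--(ii) from Theorem \ref{th:SA} and Corollary \ref{cor:finvol}, (iii) and (v) from Theorem \ref{th:discr}/Remark \ref{rem:discr} and Theorem \ref{th:SpEst}/Remark \ref{rem:SpEst01} via the comparison $s_ns_{n+1}\asymp n^{2q}$ together with the elementary asymptotics $\sum_{k\le n}k^{2q-1}\asymp n^{2q}$, $\sum_{k\ge n}k^{-2q-1}\asymp n^{-2q}$, (iv) from Theorem \ref{th:trace} by the same two-fold Cauchy--Schwarz reduction (including the convergence of $\sum_n n^{1-4q}$ for $q\ge 1$), and (vi) by verifying \eqref{eq:remL1} and invoking Theorem \ref{cor:ac01}. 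Your splitting argument in (iii)/(v) and the mean-value bound in (vi) merely make explicit estimates the paper leaves as ``straightforward.''
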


\begin{proof}
(i) and (ii) follow immediately from Theorem \ref{th:SA} and Corollary \ref{cor:finvol} since $\vol(\cA^q) = \infty$ exactly when \eqref{eq:finvol_q} is satisfied.

(iii) Applying Theorem \ref{th:discr} (see also Remark \ref{rem:discr}), we conclude that in the case \eqref{eq:finvol_q}, the operator $\bH$ has purely discrete spectrum if and only if 
\[
\lim_{n\to \infty}\sum_{k=0}^n (k^2+3k+2)^q\ell_k \sum_{k\ge n} \frac{\ell_k}{(k^2+3k+2)^q} = 0.
\]
It is not difficult to show that the latter is equivalent to \eqref{eq:Hq_discr}. Moreover, \eqref{eq:Hq_discr} holds true whenever $\ell_n = o(n^{-1})$ as $n\to \infty$ since 
\begin{align*}
\sum_{k=0}^n k^{2q-1} & = \frac{n^{2q}}{2q}(1+o(1)), & \sum_{k\ge n} \frac{1}{k^{2q+1}} & = \frac{n^{- 2q}}{2q}(1+o(1)).
\end{align*}

(iv) First observe that \eqref{eq:trace01} is equivalent to \eqref{eq:trace_q}. Moreover,  \eqref{eq:trace_q} implies also \eqref{eq:KK08}. Indeed, we get
\begin{align*}
\sum_{n\ge 0} &\frac{\ell_n}{(n^2+3n+2)^q} \sum_{k=0}^{n-1}(k^2+3k+2)^q\ell_k  < \sum_{n\ge 0} \frac{\ell_n}{(n+1)^{2q}}  \sum_{k=0}^{n-1}(k+2)^{2q} \ell_k \\
	 & \le  \sum_{n\ge 0} \frac{\ell_n}{(n+1)^{2q}} \Big(  \sum_{k=0}^{n-1} (k+2)^{2q} \ell_k^2  \,  \sum_{k=0}^{n-1} (k+2)^{2q}  \Big)^{1/2} \\
	 &\lesssim  \sum_{n\ge 0} \frac{\ell_n}{(n+1)^{2q}} \Big(  (n+1)^{2q+1}\sum_{k=0}^{n-1} (k+2)^{2q} \ell_k^2 \Big)^{1/2} \\
	&  <  \Big(  \sum_{k\ge 0} (k+2)^{2q} \ell_k^2 \Big)^{1/2}  \sum_{n\ge 0} \frac{\ell_n}{ (n+1)^{q- 1/2}} <   \sum_{k\ge 0} (k+2)^{2q} \ell_k^2  \Big(\sum_{n \ge 1} \frac{1}{n^{4q-1}} \Big)^{1/2},
\end{align*}
where the second and the last inequalities we obtained by applying the Cauchy--Schwarz inequality. 
It remains to use Theorem \ref{th:trace}.

(v) follows by applying Theorem \ref{th:SpEst} (see also Remark \ref{rem:SpEst01}).

(vi) Since
\[
\sum_{n\ge 0}\Big(\frac{s_{n+2}}{s_n}-1\Big)^2  = \sum_{n\ge 1}\Big(\frac{(n+2)^q}{n^q} - 1\Big)^2 \lesssim \sum_{n\ge 1}\frac{1}{n^2} = \frac{\pi^2}{6},
\]
the claim is immediate from Theorem \ref{cor:ac01}. 
\end{proof}

\begin{remark}\label{rem:infdi}
A few remarks are in order.
\begin{enumerate}
\item[(i)]  The antitree $\cA^q$ and the corresponding Kirchhoff Laplacian $\bH$ have been considered in \cite[Example 8.7]{kn19}. The analysis of spectral properties (in particular, spectral estimates) is a rather delicate task in this case since the combinatorial isoperimetric constant of $\cA^q$ is equal to $0$. 
We were able to describe basic spectral properties of $\bH^q$ only due to the presence of radial symmetry. Spectral properties of Kirchhoff Laplacians without radial symmetry seems to be a rather complicated problem -- even the self-adjointness problem (modulo some recent criteria obtained in \cite{EKMN}) is unclear to us at the moment. 
\iflong{}Let us only mention that there are examples of metric antitrees having finite volume and such that the corresponding Kirchhoff Laplacian has {\em infinite deficiency indices} \cite{kmn19}. \fi
\item[(ii)] It can be demonstrated by examples that the conditions $\ell_n = o(n^{-1})$ (resp., $\ell_n = \OO(n^{-1})$) as $n\to \infty$ are not necessary for the discreteness (resp., positivity). However, they are in a certain sense sharp (see \cite[Lemma 8.9]{kn19} and also Example \ref{ex:02} below). 
\item[(iii)] Since $s_{n+2} = s_n(1+o(1))$, 
we can't apply the results of Section \ref{sec:SCspec} (see Hypothesis \ref{hyp:remling}). Moreover, Proposition \ref{prop:Hq}(vi) shows that in general $\bH^q$ has absolutely continuous spectrum supported on $\R_{\ge 0}$. However, Theorem \ref{th:ac01} is a consequence of \cite[Theorem 2]{bd17}, which allows a presence of a rather rich singular (continuous) spectrum.
  \end{enumerate}
\end{remark}

 We can also improve Proposition \ref{prop:Hq}(vi) by allowing arbitrarily small lengths.
 
 \begin{corollary}\label{cor:ac04}
 Suppose $\ell_n \le 1$ for all $n\ge 0$ and $\ell_n = o(1)$ as $n\to \infty$. If 
 \be
 \sum_{n\ge 0} \left(\frac{{m(n+2)}}{{m(n)}}-1 \right)^2 <\infty,
 \ee
 then $\sigma_{\ac}(\bH^q) = \R_{\ge 0}$. 
 Here $m(n)$ is defined as in Lemma \ref{lem:ac05}.
 \end{corollary}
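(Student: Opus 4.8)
The plan is to obtain the corollary as a direct specialization of Lemma \ref{lem:ac05} to the antitree $\cA^q$, whose sphere numbers $s_n = (n+1)^q$ form a strictly increasing, hence nondecreasing, sequence. Observe first that the very formation of the series $\sum_n (m(n+2)/m(n)-1)^2$ requires $m(n)$ to be defined for every $n$, which by the relation $t_{m(n)} \le n < t_{m(n)+1}$ forces $t_n \to \infty$, i.e. $\cL = \sum_n \ell_n = \infty$. Together with the standing assumptions $\ell_n \le 1$ and $\ell_n = o(1)$, this places us exactly within the hypotheses of Lemma \ref{lem:ac05}, so it suffices to verify its summability condition
\[
\sum_{n\ge 0}\left(\frac{s_{m(n+2)}}{s_{m(n)}} - 1\right)^2 < \infty.
\]

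First I would note that $m(n)\to\infty$ as $n\to\infty$ (otherwise $n < t_{m(n)+1}$ would keep $n$ bounded), so that $m(n) \ge 1$ for all large $n$ and all ratios below are well-defined. Writing $x_n := \frac{m(n+2)+1}{m(n)+1}$ and using $s_{m(n)} = (m(n)+1)^q$, we have $\frac{s_{m(n+2)}}{s_{m(n)}} = x_n^{\,q}$. Since $m$ is nondecreasing, $x_n \ge 1$, and the elementary identity
\[
x_n - 1 = \frac{m(n+2)-m(n)}{m(n)+1} = \frac{m(n)}{m(n)+1}\left(\frac{m(n+2)}{m(n)} - 1\right)
\]
gives the key comparison $0 \le x_n - 1 \le \frac{m(n+2)}{m(n)} - 1$. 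In particular the hypothesis forces $x_n \to 1$.

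Because $x_n \to 1$ from above, for all large $n$ one has $x_n \in [1,2]$, where the mean value theorem yields $x_n^{\,q} - 1 \le q\,2^{q-1}(x_n - 1)$. Combining this with the comparison from the previous step, I obtain
\[
\left(\frac{s_{m(n+2)}}{s_{m(n)}} - 1\right)^2 \le q^2 4^{q-1}\left(\frac{m(n+2)}{m(n)} - 1\right)^2
\]
for all sufficiently large $n$. Summing and invoking the assumption $\sum_n (m(n+2)/m(n)-1)^2 < \infty$ verifies the summability condition, and Lemma \ref{lem:ac05} then gives $\sigma_{\ac}(\bH^q) = \R_{\ge 0}$. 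The only genuine bookkeeping, and the step I would be most careful with, is the passage from the index-counting quantity $m(n)$ to the sphere-number ratio $s_{m(n)} = (m(n)+1)^q$: the additive shift ``$+1$'' is absorbed harmlessly by the comparison inequality, and the power $q$ contributes only the bounded constant $q\,2^{q-1}$ once $x_n$ has entered the interval $[1,2]$, so the two summability conditions are indeed comparable term by term.
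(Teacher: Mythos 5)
Your proposal is correct and takes essentially the same approach as the paper: both apply Lemma \ref{lem:ac05} and verify its summability hypothesis by comparing $\frac{s_{m(n+2)}}{s_{m(n)}}-1=\left(\frac{m(n+2)+1}{m(n)+1}\right)^{q}-1$ with $\frac{m(n+2)}{m(n)}-1$. The paper records this comparison as a one-line asymptotic equivalence ($\approx$), whereas you make it quantitative via the ratio identity and the mean value theorem bound $x^{q}-1\le q\,2^{q-1}(x-1)$ on $[1,2]$, which is exactly the bookkeeping the paper leaves implicit.
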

 
 \begin{proof}
We need to apply Lemma \ref{lem:ac05} and notice that in this case
\begin{align*}
\frac{s_{m(n+2)}}{s_{m(n)}}-1 = \left(\frac{m(n+2)+1}{m(n)+1}\right)^{q} - 1 \approx \frac{m(n+2)}{m(n)} - 1,  
\end{align*}
as $n\to \infty$.
 \end{proof}

\begin{example}\label{ex:02}
Fix $s\ge 0$. Let the lengths of the metric antitree $\cA^q$ be given by
\be
\ell_n = \frac{1}{(n+1)^s},\qquad n\ge 0.
\ee
Denote the corresponding Kirchhoff Laplacian by $\bH^{q,s}$. 
Applying Proposition \ref{prop:Hq} and Corollary \ref{cor:ac04}, we end up with the following description of the spectral properties of $\bH^{q,s}$.

\begin{corollary}\label{cor:Hqs}
\begin{itemize}
\item[(i)] $\bH^{q,s}$ is self-adjoint if and only if $s\in [0,2q+1]$.
If  $s>2q+1$, then then deficiency indices of $\bH^{q,s}$ are equal to $1$. Moreover, in this case the spectra of self-adjoint extensions $\bH^{q,s}_\theta$ of $\bH^{q,s}$ are purely discrete and  eigenvalues admit the standard Weyl asymptotic 
\be\label{eq:Weyl_qs}
\lim_{\lambda\to \infty} \frac{N(\lambda;\bH^{q,s}_\theta)}{\sqrt{\lambda}} = \frac{1}{\pi} \sum_{k=0}^q \binom{q}{k}\zeta(s-2q+k),
\ee 
where 
$\zeta$ is the Riemann zeta function.
\end{itemize}
Assume in addition that $s\in[0,2q+1]$, that is, $\bH^q$ is self-adjoint.
\begin{itemize}
\item[(ii)] The spectrum of $\bH^{q,s}$ is purely discrete if and only if $s\in (1,2q+1]$. Moreover, 
the resolvent of $\bH^{q,s}$ belongs to the trace class if and only if $s\in (q+1/2,2q+1]$.
\item[(iii)] $\bH^{q,s}$ is positive definite if and only if $s\in [1,2q+1]$. 
\item[(iv)] If $s\in[0,1)$, then $\sigma_{\ac}(\bH^{q,s}) = \R_{\ge 0}$.
\end{itemize}
\end{corollary}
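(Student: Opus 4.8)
The plan is to derive each item by specializing the general criteria of Proposition \ref{prop:Hq} to the data $s_n = (n+1)^q$ and $\ell_n = (n+1)^{-s}$, for which every sum occurring in those criteria reduces to a comparison with $\sum_k k^\alpha$ for an explicit exponent $\alpha$. The only genuinely new input is the closed form of the volume in terms of $\zeta$.

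For (i), I would start from $\vol(\cA^{q,s}) = \sum_{n\ge 0}(n+1)^q(n+2)^q(n+1)^{-s}$. Substituting $m=n+1$ and expanding $(m+1)^q = \sum_{k=0}^q \binom{q}{k} m^k$ gives $\vol(\cA^{q,s}) = \sum_{k=0}^q \binom{q}{k}\sum_{m\ge 1} m^{q-s+k}$, so the series converges exactly when the largest exponent $2q-s$ satisfies $2q-s<-1$, i.e. $s>2q+1$; for $s\le 2q+1$ the volume is infinite. Proposition \ref{prop:Hq}(i) then yields self-adjointness precisely for $s\in[0,2q+1]$, and Proposition \ref{prop:Hq}(ii) yields deficiency indices one and discreteness for $s>2q+1$. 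In that regime each inner sum equals $\zeta(s-q-k)$ (all arguments exceeding $1$), and reindexing $k\mapsto q-k$ turns $\sum_{k=0}^q\binom{q}{k}\zeta(s-q-k)$ into $\sum_{k=0}^q\binom{q}{k}\zeta(s-2q+k)$, which combined with the Weyl law \eqref{eq:WeylLaw} gives \eqref{eq:Weyl_qs}.

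For (ii) and (iii) the key quantity is $P_n := \big(\sum_{k=0}^n k^{2q}\ell_k\big)\big(\sum_{k\ge n}\ell_k k^{-2q}\big)$, since Proposition \ref{prop:Hq}(iii) characterizes discreteness by $P_n\to 0$ and Proposition \ref{prop:Hq}(v) characterizes positivity by $\sup_n P_n<\infty$. Using $\ell_k\asymp k^{-s}$ and comparing the partial sums with integrals, for $s<2q+1$ one gets $\sum_{k\le n}k^{2q-s}\asymp n^{2q-s+1}$ and $\sum_{k\ge n}k^{-2q-s}\asymp n^{1-2q-s}$, so $P_n\asymp n^{2-2s}$ (with only a harmless $\log n$ factor at $s=2q+1$, still forcing $P_n\to 0$). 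Hence $P_n\to 0\iff s>1$ and $\sup_n P_n<\infty\iff s\ge 1$, giving discreteness for $s\in(1,2q+1]$ and positivity for $s\in[1,2q+1]$. The trace-class statement follows from Proposition \ref{prop:Hq}(iv): $\sum_n n^{2q}\ell_n^2\asymp\sum_n n^{2q-2s}$ converges iff $s>q+1/2$, and since $q+1/2>1$ this interval lies inside the discreteness range, giving $s\in(q+1/2,2q+1]$.

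For (iv), when $s=0$ the antitree is equilateral with $\ell_\ast(\cA^{q,s})=1>0$, so Proposition \ref{prop:Hq}(vi) applies directly. For $s\in(0,1)$ one has $\ell_n\le 1$ and $\ell_n=o(1)$, so I would invoke Corollary \ref{cor:ac04}, where the only thing to verify is $\sum_n\big(m(n+2)/m(n)-1\big)^2<\infty$. Here $t_m=\sum_{k<m}(k+1)^{-s}\asymp m^{1-s}/(1-s)$, and inverting $t_{m(n)}\le n<t_{m(n)+1}$ gives $m(n)\asymp((1-s)n)^{1/(1-s)}$; consequently $m(n+2)/m(n)=(1+2/n)^{1/(1-s)}(1+o(1))=1+\OO(1/n)$, so $\big(m(n+2)/m(n)-1\big)^2=\OO(n^{-2})$ is summable. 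The main technical point, and the step most likely to require care, is making these equivalences rigorous at the two boundary values $s=1$ and $s=2q+1$ in (ii)--(iii) and controlling the error in the $m(n)$ asymptotics in (iv); both are handled by straightforward but attentive integral comparison.
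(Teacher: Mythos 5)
Your proposal is correct and follows exactly the route the paper intends: the paper itself leaves this corollary to the reader, stating only that it follows by applying Proposition \ref{prop:Hq} and Corollary \ref{cor:ac04} to the lengths $\ell_n=(n+1)^{-s}$, which is precisely what you carry out (the volume-to-$\zeta$ reindexing $k\mapsto q-k$, the reduction of (ii)--(iii) to $P_n\asymp n^{2-2s}$ with the logarithmic correction at $s=2q+1$, and the verification $s>q+1/2\Rightarrow s>1$ are all as needed). The only step requiring the care you already flag is sharpening $m(n+2)/m(n)=1+\OO(1/n)$ in (iv), which indeed follows from the integral-comparison bound $t_m=\tfrac{m^{1-s}}{1-s}+\OO(1)$, so $m(n)^{1-s}=(1-s)n+\OO(1)$ and the squared increments are summable.
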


We leave its proof to the reader and finish this section with a few  remarks.
\begin{remark}
Corollary \ref{cor:Hqs} complements the results obtained in \cite[Example 8.7]{kn19}.
Moreover, items (ii) and (iii) demonstrate sharpness of sufficient conditions obtained in Proposition \ref{prop:Hq}(iii) and (v). Let us only mention that the question on the structure of the essential spectrum of $\bH^{q,1}$ as well as on the structure of the singular spectrum of $\bH^{q,s}$ with $s\in [0,1]$ remains open.\hfill $\lozenge$
\end{remark}
\end{example}

\begin{remark}
In conclusion let us mention that choosing slightly different lengths
\[
\ell_n = \frac{(n+1)^{q-s}}{(n+2)^{q}},\quad n\ge 0,
\]
and denoting the corresponding operator by $\wt{\bH}^{q,s}$, we obtain
\be
\lim_{\lambda\to \infty}\frac{N(\lambda;\wt{\bH}^{q,s}_\theta)}{\sqrt{\lambda}} = \frac{1}{\pi}{\zeta(s-2q)}, \quad s>2q+1. 
\ee
\end{remark}

\bigskip
\noindent
\ack We thank the referee for the careful reading of our manuscript and critical remarks that have helped to improve the exposition.

\iflong

\appendix

\section{The spectrum of the operator $\wt{\rh}_n$}\label{app:atan2}

For every $n\ge 1$ consider the function $d_n\colon \R_{> 0}\to \R$ given by
\be\label{eq:d_n}
d_n(\lambda) = s_{n+1}\cos(\sqrt{\lambda}\ell_n)\sin(\sqrt{\lambda}\ell_{n-1}) + s_{n-1}\cos(\sqrt{\lambda}\ell_{n-1})\sin(\sqrt{\lambda}\ell_{n}).
\ee
It is straightforward to establish the following connection between $d_n$ and the spectrum of $\wt{\rh}_n$.

\begin{lemma}\label{lem:spec_tihn}
The spectrum of the operator $\wt{\rh}_n$ coincides with the set of positive zeros of the function $d_n$.
\end{lemma}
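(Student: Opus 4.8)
The plan is to solve the eigenvalue problem $\wt{\rh}_n f=\lambda f$ by hand, exploiting that $\wt{\rh}_n$ is a \emph{regular} self-adjoint Sturm--Liouville operator on the compact interval $(t_{n-1},t_{n+1})$, so its spectrum is purely discrete and every eigenfunction solves the underlying ODE piecewise. I would also record at the outset that $\wt{\rh}_n$ is nonnegative, since its quadratic form is $\int \mu\,|f'|^2$ with Dirichlet boundary data, whence $\sigma(\wt{\rh}_n)\subset[0,\infty)$; moreover $\lambda=0$ is not an eigenvalue (a short computation with affine functions shows the continuity and weighted-jump conditions \eqref{eq:bcrhn} force the trivial solution). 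Thus it suffices to identify the positive eigenvalues, which is exactly what the positive zeros of $d_n$ should capture.

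First I would fix $\lambda>0$, write $k=\sqrt\lambda$, and recall from Section \ref{ss:3.3} that $\mu$ is constant on each of $I_{n-1}$ and $I_n$, so $\wt{\tau}_n f=\lambda f$ reduces there to $-f''=\lambda f$. The two Dirichlet conditions then pin down the solution on each subinterval up to one scalar: since $f(t_{n-1})=0$, on $I_{n-1}$ we have $f(t)=A\sin\!\big(k(t-t_{n-1})\big)$, and since $f(t_{n+1})=0$, on $I_n$ we have $f(t)=B\sin\!\big(k(t_{n+1}-t)\big)$. Using $\ell_{n-1}=t_n-t_{n-1}$ and $\ell_n=t_{n+1}-t_n$ and imposing the coupling conditions \eqref{eq:bcrhn} at $t_n$, continuity gives
\[
A\sin(k\ell_{n-1})=B\sin(k\ell_n),
\]
while $f'(t_n-)=Ak\cos(k\ell_{n-1})$ and $f'(t_n+)=-Bk\cos(k\ell_n)$ turn the condition $s_{n-1}f'(t_n-)=s_{n+1}f'(t_n+)$ into
\[
s_{n-1}A\cos(k\ell_{n-1})=-s_{n+1}B\cos(k\ell_n).
\]

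The remaining step is pure linear algebra: this is a homogeneous $2\times 2$ system in $(A,B)$, so a nontrivial eigenfunction exists if and only if its determinant vanishes, and expanding the determinant gives precisely
\[
s_{n+1}\cos(k\ell_n)\sin(k\ell_{n-1})+s_{n-1}\cos(k\ell_{n-1})\sin(k\ell_n)=d_n(\lambda).
\]
Hence $\lambda>0$ belongs to $\sigma(\wt{\rh}_n)$ exactly when $d_n(\lambda)=0$, which together with the nonnegativity remark and the exclusion of $\lambda=0$ yields the claim. There is no genuine analytic difficulty here; the only points requiring care are bookkeeping ones, namely using the decreasing representation $\sin\!\big(k(t_{n+1}-t)\big)$ on $I_n$ so that the sign of $f'(t_n+)$ comes out correctly, and noting that $d_n(0)=0$ is a spurious zero (this is why the statement refers only to \emph{positive} zeros).
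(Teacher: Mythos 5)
Your proof is correct, and it is precisely the argument the paper has in mind: the paper offers no written proof of this lemma (it merely declares the connection ``straightforward''), and the intended verification is exactly your computation — solve $-f''=\lambda f$ piecewise with the Dirichlet data, impose the coupling conditions \eqref{eq:bcrhn} at $t_n$, and identify the vanishing of the resulting $2\times 2$ determinant with $d_n(\lambda)=0$. Your additional care with discreteness of the spectrum, nonnegativity, and the exclusion of $\lambda=0$ (so that only \emph{positive} zeros of $d_n$ count) correctly fills in the details the paper leaves implicit.
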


Notice that  
\be
d_n(\lambda) = \frac{s_{n+1}+s_{n-1}}{2}\sin\big(2\sqrt{\lambda}\ell_n\big),
\ee
if $\ell_{n-1} = \ell_{n}$ and hence $\sigma(\wt{\rh}_n) = \{\frac{\pi^2k^2}{(2\ell_n)^2}\}_{k\ge 1}$. Moreover, 
if $s_{n-1} = s_{n+1}$, then
\be
d_{n}({\lambda}) = s_{n+1}\sin\big(\sqrt{\lambda}(\ell_{n-1}+\ell_n)\big),
\ee
and hence $\sigma(\wt{\rh}_n) = \{\frac{\pi^2k^2}{(\ell_{n-1}+\ell_n)^2}\}_{k\ge 1}$. Notice that in both cases the lowest eigenvalue $\lambda_1 ({\wt \rh}_n)$ of $\wt{\rh}_n$ is 
\be
\lambda_1 ({\wt \rh}_n) = \frac{\pi^2}{(\ell_{n-1} +\ell_n)^2}.
\ee
In general, such an equality is not true. Indeed, except these two special cases, it is usually difficult to find a closed form for the eigenvalues of the operator $\wt{\rh}_n$. 
Our next aim is to estimate the lowest eigenvalue $\lambda_1(\wt{\rh}_n)$ of $\wt{\rh}_n$. We begin with the following observation.

\begin{corollary}
If $\ell_n^\ast := \max \{\ell_{n-1}, \ell_n \}$, then
\be \label{est:l1}
	\left(  \frac{\pi }{2  \ell_n^\ast } \right )^2 \le \lambda_1 ({\wt \rh}_n)  \le \left(  \frac{\pi }{ \ell_n^\ast } \right )^2,
\ee
\end{corollary}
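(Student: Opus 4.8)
The plan is to reduce everything to Lemma~\ref{lem:spec_tihn}, which identifies $\lambda_1(\wt{\rh}_n)$ with the square of the smallest positive zero of $d_n$. Writing $z = \sqrt{\lambda}$, I would study the sign of
\[
d_n(z^2) = s_{n+1}\cos(z\ell_n)\sin(z\ell_{n-1}) + s_{n-1}\cos(z\ell_{n-1})\sin(z\ell_n)
\]
on the interval $(0, \pi/\ell_n^\ast]$. The first observation is that $d_n$ is invariant under the simultaneous swap $(\ell_{n-1}, s_{n-1}) \leftrightarrow (\ell_n, s_{n+1})$, while both bounds in \eqref{est:l1} depend only on $\ell_n^\ast$. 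Hence I may assume without loss of generality that $\ell_n = \ell_n^\ast \ge \ell_{n-1}$, which removes any ambiguity from the maximum.

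For the lower bound I would show that $d_n$ has no zero in $(0, \pi/(2\ell_n^\ast))$. Indeed, for $z \in (0, \pi/(2\ell_n))$ we have $z\ell_{n-1} \le z\ell_n < \pi/2$, so each of the factors $\cos(z\ell_n)$, $\sin(z\ell_{n-1})$, $\cos(z\ell_{n-1})$, $\sin(z\ell_n)$ is strictly positive (the two sines because $z > 0$). Therefore $d_n(z^2) > 0$ on this interval, and the smallest positive zero of $d_n$ is at least $\pi/(2\ell_n^\ast)$, which is precisely the left inequality.

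For the upper bound I would evaluate $d_n$ at the endpoint $z = \pi/\ell_n^\ast = \pi/\ell_n$. Since $\cos(\pi) = -1$ and $\sin(\pi) = 0$, this collapses to $d_n = -s_{n+1}\sin(\pi\ell_{n-1}/\ell_n)$. Because $0 < \ell_{n-1}/\ell_n \le 1$, the argument lies in $(0,\pi]$, so $\sin(\pi\ell_{n-1}/\ell_n) \ge 0$ and hence $d_n(\pi^2/\ell_n^2) \le 0$. Combining this with the strict positivity of $d_n$ near $z = 0^+$ obtained above, the intermediate value theorem produces a zero of $d_n$ in $(0, \pi/\ell_n^\ast]$, so the smallest positive zero cannot exceed $\pi/\ell_n^\ast$, giving the right inequality.

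The argument is elementary once Lemma~\ref{lem:spec_tihn} is available; the only delicate point—and essentially the sole obstacle—is the degenerate case $\ell_{n-1} = \ell_n$, where $d_n(\pi^2/\ell_n^2) = 0$ vanishes rather than being strictly negative, so the intermediate value step does not locate the zero in the open interval. Here, however, the explicit identity $d_n(\lambda) = \tfrac{s_{n+1}+s_{n-1}}{2}\sin(2\sqrt{\lambda}\,\ell_n)$ recorded above shows directly that $\lambda_1(\wt{\rh}_n) = (\pi/(2\ell_n))^2$, so both inequalities in \eqref{est:l1} continue to hold (the lower one with equality). Thus the whole proof amounts to a sign analysis of the two trigonometric summands, with a separate elementary check in the equal-length case.
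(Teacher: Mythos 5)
Your proof is correct and follows essentially the same route as the paper's: reduce to the zeros of $d_n$ via Lemma~\ref{lem:spec_tihn}, assume WLOG $\ell_n^\ast=\ell_n$, get the lower bound from strict positivity of all four trigonometric factors on $(0,\pi/(2\ell_n))$, and get the upper bound from $d_n(\pi^2/\ell_n^2)=-s_{n+1}\sin(\pi\ell_{n-1}/\ell_n)\le 0$ together with positivity near zero. The only cosmetic difference is the degenerate case $\ell_{n-1}=\ell_n$: you invoke the explicit formula $d_n(\lambda)=\tfrac{s_{n+1}+s_{n-1}}{2}\sin(2\sqrt{\lambda}\,\ell_n)$, whereas the paper simply notes that the zero then sits at the endpoint of the half-open interval $(0,\pi^2/\ell_n^2]$, which suffices for the upper bound; both resolutions are fine.
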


\begin{proof}
First of all, without loss of generality we can assume that $\ell_n^\ast = \ell_n$. If $0<\sqrt{\lambda}<\pi/ 2 \ell_n$, then all trigonometric functions in \eqref{eq:d_n} are strictly positive. This implies the lower estimate. To obtain the upper bound, notice that
\begin{equation*}
		d_n (\pi^2/\ell_n^2) = - s_{n+1}\sin \left ( \frac{\ell_{n-1} }{\ell_n}  \pi \right) \leq 0,
	\end{equation*}
	and the equality holds exactly when $\ell_n=\ell_{n-1}$. On the other hand, $d_n(\lambda) >0 $ if $\lambda$ is sufficiently close to zero. Hence $d_n$ has at least one zero in the interval $(0, \pi^2/\ell_n^2]$.
\end{proof}

\begin{remark}
The lower bound in \eqref{est:l1} becomes equality if $\ell_{n-1}= \ell_n$.
 Moreover, upon noting that 
 \[
	\lim_{\ell_{n-1}\to 0 } \lambda_1(\wt{\rh}_n) = \left(  \frac{\pi }{ \ell_n } \right)^2,
\]
for fixed $s_n$, $s_{n+1}, \ell_{n} >0$, the upper estimate in \eqref{est:l1} is also sharp.
\end{remark}

We can slightly improve the upper bound in the following way.

\begin{corollary}
If either $s_{n+1}> s_{n-1}$ and $\ell_n > \ell_{n-1}$ or $s_{n+1}< s_{n-1}$ and $\ell_n < \ell_{n-1}$, then 
\be\label{est:l2}
\lambda_1 ({\wt \rh}_n) < \frac{\pi^2}{(\ell_{n-1} +\ell_n)^2}.
\ee
\end{corollary}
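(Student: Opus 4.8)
The plan is to use Lemma~\ref{lem:spec_tihn}, according to which $\lambda_1(\wt{\rh}_n)$ is the smallest positive zero of the function $d_n$ from \eqref{eq:d_n}. Setting $\Lambda := \pi^2/(\ell_{n-1}+\ell_n)^2$ (the right-hand side of \eqref{est:l2}), I would show that $d_n(\Lambda)<0$ under either hypothesis. Since $d_n(\lambda)>0$ for all sufficiently small $\lambda>0$, the intermediate value theorem then produces a zero of $d_n$ in the open interval $(0,\Lambda)$, whence the smallest positive zero satisfies $\lambda_1(\wt{\rh}_n)<\Lambda$, which is exactly \eqref{est:l2}.

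The key computation is the evaluation of $d_n$ at $\Lambda$. Writing $\alpha := \sqrt{\Lambda}\,\ell_{n-1}$ and $\beta := \sqrt{\Lambda}\,\ell_n$, the definition of $\Lambda$ gives $\alpha+\beta=\pi$, so that $\cos\beta = -\cos\alpha$ and $\sin\beta = \sin\alpha$. Substituting these into \eqref{eq:d_n} collapses the two terms into the single product
\be\label{eq:dnLambda}
d_n(\Lambda) = \sin\alpha\,\cos\alpha\,(s_{n-1}-s_{n+1}).
\ee
Because $0<\alpha<\pi$ we always have $\sin\alpha>0$, so the sign of $d_n(\Lambda)$ is governed by $\cos\alpha\,(s_{n-1}-s_{n+1})$. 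Moreover $\alpha = \pi\ell_{n-1}/(\ell_{n-1}+\ell_n)$, so $\cos\alpha>0$ exactly when $\ell_{n-1}<\ell_n$ and $\cos\alpha<0$ exactly when $\ell_{n-1}>\ell_n$. In the first case of the hypothesis ($s_{n+1}>s_{n-1}$, $\ell_n>\ell_{n-1}$) both $\cos\alpha>0$ and $s_{n-1}-s_{n+1}<0$, so $d_n(\Lambda)<0$; in the second case ($s_{n+1}<s_{n-1}$, $\ell_n<\ell_{n-1}$) both $\cos\alpha<0$ and $s_{n-1}-s_{n+1}>0$, again giving $d_n(\Lambda)<0$.

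To complete the argument I would record the behaviour of $d_n$ near the origin: as $\lambda\to 0^+$ the expansion $d_n(\lambda) = \sqrt{\lambda}\,(s_{n+1}\ell_{n-1}+s_{n-1}\ell_n)+o(\sqrt{\lambda})$ shows $d_n(\lambda)>0$ for small $\lambda>0$. Continuity of $d_n$ together with the sign change between small $\lambda$ and $\Lambda$ then yields a zero in $(0,\Lambda)$, and hence \eqref{est:l2}. There is no real obstacle in this proof; the only point requiring attention is the identity $\alpha+\beta=\pi$, which is precisely what makes the choice $\Lambda=\pi^2/(\ell_{n-1}+\ell_n)^2$ special and reduces \eqref{eq:d_n} to the transparent product \eqref{eq:dnLambda}.
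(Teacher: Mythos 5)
Your proof is correct and follows essentially the same route as the paper: both evaluate $d_n$ at $\Lambda=\pi^2/(\ell_{n-1}+\ell_n)^2$ (your identity $d_n(\Lambda)=\sin\alpha\cos\alpha\,(s_{n-1}-s_{n+1})$ is just the paper's expression $\tfrac{s_{n-1}-s_{n+1}}{2}\sin\bigl(2\pi\tfrac{\ell_{n-1}}{\ell_{n-1}+\ell_n}\bigr)$ via the double-angle formula), verify strict negativity under either hypothesis, and conclude by the sign change from $d_n>0$ near the origin, exactly as in the paper's proof of the upper bound in \eqref{est:l1}.
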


\begin{proof}
Since
\[
\frac{\pi \ell_{n-1}}{\ell_{n-1} +\ell_n} = \pi - \frac{\pi \ell_{n}}{\ell_{n-1} +\ell_n}, 
\]
we immediately get 
\begin{align*}
d_n\Big(\frac{ \pi^2}{(\ell_{n-1} +\ell_n)^2}\Big) 
& = \frac{s_{n+1} - s_{n-1}}{2} \sin\Big(2\pi\frac{\ell_{n}}{\ell_{n-1} +\ell_n} \Big) \\
& = \frac{s_{n-1} - s_{n+1}}{2} \sin\Big(2\pi\frac{\ell_{n-1}}{\ell_{n-1} +\ell_n} \Big).
\end{align*}
Using the same argument as in the proof of the upper bound in \eqref{est:l1}, we arrive at the desired estimates. 
\end{proof}

In general, there is no closed form for zeros of the function $d_n$. It is possible to express it with the help of the arctangent function with two arguments.  Namely, we can simplify $d_n$ a little bit:
\begin{align*}
d_n(z^2) & = s_{n+1}\cos(z\ell_n)\sin(z\ell_{n-1})  + s_{n-1}\cos(z\ell_{n-1})\sin(z\ell_{n}) \\
&= \frac{s_{n+1}-s_{n-1}}{2}\sin(z(\ell_{n-1}-\ell_n)) + \frac{s_{n+1}+s_{n-1}}{2}\sin(z(\ell_{n-1}+\ell_n)). 
\end{align*}

Now  let us apply the following formula
\footnote{See, e.g., \tiny{\url{https://en.wikipedia.org/wiki/List_of_trigonometric_identities\#Linear_combinations}}}:
\be
 a\sin x+b\sin(x+\theta )=c\sin(x+\varphi ),
 \ee
 where
\be
 c=\sqrt {a^{2}+b^{2}+2ab\cos(\theta)}
\ee
and
\be
 \varphi =\operatorname {atan2} \left(b\,\sin \theta ,a+b\cos \theta \right).
 \ee
 Here $\operatorname {atan2} (\cdot,\cdot)$ is the {\em arctangent function with two arguments}\footnote{See, e.g., {\tiny \url{https://en.wikipedia.org/wiki/Atan2}}}. Setting 
 \[
 a= s_{n+1}-s_{n-1}, \quad b= s_{n+1} + s_{n-1},\qquad \theta = 2z\ell_n,
 \]
 we get
 \be
 \sqrt{2}d_n(z^2) = \sqrt{s_{n-1}^2 + s_n^2 +(s_n^2 - s_{n-1}^2)\cos(2z\ell_n)} \sin\big(z(\ell_{n-1}-\ell_n) + \phi(z)\big),
 \ee
 where
 \begin{align*}
 \phi(z) = \operatorname {atan2} \left((s_{n+1} + s_{n-1})\,\sin(2z\ell_n), (s_{n+1} - s_{n-1})+(s_{n+1} + s_{n-1})\cos(2z\ell_n) \right).
 \end{align*}
However, seems this does not lead to a closed formula anyway.

\section{Antitrees with bounded geometry and ac-spectrum}\label{sec:ATbndgeom}

In this section we collect several examples of antitrees such that the degree function is bounded. In fact, in all of the examples the degree function takes finitely many values and hence these examples complement Theorem \ref{th:rem02} in a certain way since the length function is allowed to take infinitely many values.

\begin{example}\label{ex:thac02ex2}
Suppose $p,q \in \Z_{\ge 1}$ and let $\cA$ be the antitree with sphere numbers $s_0=1$ and
\begin{align*}
s_{2n-1} & = q, & s_{2n} & = p,
\end{align*}
for all $n\in\Z_{\ge 1}$. 
Let $\{\ell_n\}_{n\ge 0}$ be edge lengths. If $\cL= \sum_n \ell_n < \infty$, then clearly $\vol(\cA)<\infty$ and the spectra of self-adjoint extensions of $\bH$ are purely discrete by Corollary \ref{cor:finvol}. However, if $\cL = \infty$, then $\mu(x) \equiv pq$ on $I_n$ for all $n\ge 1$ and 
\[
\int_n^{n+2} \mu(x)dx\int_n^{n+2}\frac{dx}{\mu(x)} - 4 = 0
\]
for all $n\ge t_1$. Hence Theorem \ref{th:ac01} applies and thus $\sigma_{\ac}(\bH) = \R_{\ge 0}$.  
\hfill $\lozenge$
\end{example}

\begin{remark}
Since $\mu\equiv const$ on $(t_1,\cL)$ in the above example, the corresponding operator $\rH$ can be considered as a coupling of a weighted Sturm--Liouville operator acting on $(0,t_1)$ and the free Schr\"odinger operator $-d^2/dx^2$ on $(t_1,\cL)$. This explains the result in the above example. Moreover, it also implies that the spectrum of $\rH$ is purely absolutely continuous and coincides with $\R_{\ge 0}$ if $\cL=\infty$. 
\end{remark}

Using Theorem \ref{th:ac01} one can construct non-equilateral antitress satisfying the assumptions of Theorem \ref{th:ac01} and such that $\lim_{n\to \infty} \ell_n = 0$.

\begin{example} \label{ex:thac02ex1}
Let $\{\delta_k\}_{k\ge 0} \in \ell^1$ be a sequence of positive numbers. 
As in Figure \ref{fig:thac02ex1}, let $\cA$ be the radially symmetric antitree with sphere numbers
\begin{align*}
s_{4k} & = s_{4k+1} =1, &  s_{4k+2} & = s_{4k+3} =2	,
\end{align*}
and edge lengths
\begin{align*}
\ell_{4k} & = 1, & \ell_{4k+1} & = \ell_{4k+2} = \ell_{4k+3} =\delta_k,	
\end{align*}
for all $k\ge 0$.
\begin{figure}
	\begin{center}
		\begin{tikzpicture}    [scale = 0.8,every node/.style={scale=.8}]

		
		\foreach \x in {0,3,6}
		{\filldraw 
		(\x, 0) circle (2pt)
		(\x +2 , 0) circle (2pt);
		
		\draw (\x, 0) -- (\x +2, 0);
		}
		
		\filldraw (9, 0) circle (2pt);
		\draw [thin, dashed] (9, 0) -- (10.5, 0);
		
		\foreach \x in {2,5,8}
		\foreach \y in {-1, 1}
		\foreach \z in {1,2}
		{\filldraw (\x + \z* 1/3, \y) circle (2pt);
		}
		
		\foreach \x in {2,5,8}
		\foreach \y in {-1, 1}
		{	\draw (\x, 0) -- (\x + 1/3, \y);
			\draw (\x+1, 0) -- (\x + 2/3, \y);
			\draw (\x+1/3, \y) -- (\x + 2/3, \y);
				\draw (\x+1/3, \y) -- (\x + 2/3, -\y);
		}

		\end{tikzpicture}
	\end{center}
	\caption{The antitree $\cA$ in Example \ref{ex:thac02ex1}} \label{fig:thac02ex1}
\end{figure}
Notice that
\begin{align*}
	\mu(x) = \begin{cases} 1, &   x \in \cup_{k\ge 0}I_{ 4k} \\  2, &   x \in \cup_{k\ge 0}(I_{ 4k + 1}\cup I_{ 4k+3}) \\  4, &  x \in \cup_{k\ge 0} I_{ 4k+2}  \end{cases}.
\end{align*}
Denote $\Omega_1 =  \cup_{k\ge 0}I_{ 4k}$ and $\Omega_2 = \R_{\ge 0}\setminus\Omega_1$ and set 
\begin{align*}
\Omega_1^n & := (n,n+2)\cap \Omega_1, & \Omega_2^n & := (n,n+2)\cap \Omega_2,
\end{align*}
for all $n\ge 0$.
Notice that the sets $\Omega_1^n $ and $\Omega_2^n$ are disjoint and $|\Omega_1^n| + |\Omega_2^n| =2$. Then we can estimate
\begin{align*}
\int_n^{n+2} \mu(x)dx & \int_n^{n+2}\frac{dx}{\mu(x)} - 4   
= \Big(|\Omega_1^n| +  \int_{\Omega_2^n} \mu(x)dx\Big)\Big(|\Omega_1^n| +  \int_{\Omega_2^n} \frac{dx}{\mu(x)}\Big)- 4\\
& \le (2 - |\Omega_2^n| + 4|\Omega_2^n|)(2 - |\Omega_2^n| + \frac{1}{2}|\Omega_2^n|) - 4 \le 5 |\Omega_2^n|.
\end{align*}
Taking into account that 
\[
\sum_{n\ge 0} |\Omega_2^n| = 3\sum_{n\ge 0}\delta_n <\infty,
\]
and then applying Theorem \ref{th:ac01}, we conclude that $\sigma_{\ac}(\bH) = \R_{\ge0}$.
 \hfill $\lozenge$
\end{example}

In fact, Examples \ref{ex:thac02ex2} and \ref{ex:thac02ex1} are in a certain sense very similar and can be generalized to a much wider extent. 

\begin{corollary}\label{cor:ac03} 
	Let $\cA$ be an infinite radially symmetric antitree such that $\{s_n\}_{n\ge 0}$ takes only finitely many different values, that is, the set $\cS := \{ s_n\colon n \in \Z_{\ge 0} \}$ is finite. Suppose further that there exists $\sigma_0 \in \cM$ such that
	\[
	|\cI_{\sigma_0}| =  \infty \qquad \text{ and } \qquad |\cI_{s}| < \infty  \quad \text{ for all } s \in \cM\setminus\{\sigma_0\}.
	\]
	Then $\sigma_{\ac}  (\bH) =\R_{\ge0}$.
\end{corollary}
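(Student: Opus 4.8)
The plan is to reduce everything to the Bessonov--Denisov criterion of Theorem \ref{th:ac01}, whose hypothesis I will verify through the exact formula of Lemma \ref{lem:ac01}, taking full advantage of the finiteness of $\cM$ and of the assumption that only the single value $\sigma_0$ is carried on a set of infinite measure. First I would record two immediate reductions. Since $[0,\cL)$ is the disjoint union of the sets $\cI_s$, $s \in \cM$, we have $\cL = \sum_{s \in \cM}|\cI_s| \ge |\cI_{\sigma_0}| = \infty$, so the standing hypothesis $\cL = \infty$ of Theorem \ref{th:ac01} is satisfied. Moreover $\cM$ is automatically finite: as $\cS$ is finite, the products $s_n s_{n+1}$ all lie in the finite set $\{ab : a,b \in \cS\}$, whence $\cM \subseteq \{ab : a,b \in \cS\}$.

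By Lemma \ref{lem:ac01}, it then suffices to prove that
\[
\sum_{n \ge 0}\sum_{s \in \cM}\sum_{\xi \neq s}|\cI_s^n||\cI_\xi^n|\,\frac{(s-\xi)^2}{s\xi} < \infty .
\]
Because $\cM$ is finite, the coefficients $\frac{(s-\xi)^2}{s\xi}$ are bounded above by a constant $C = C(\cM)$ uniformly over all pairs $s \neq \xi$, so the problem is reduced to showing $\sum_{n}\sum_{s \neq \xi}|\cI_s^n||\cI_\xi^n| < \infty$. The decisive observation is that whenever $s \neq \xi$ at least one of $s,\xi$ belongs to the exceptional set $B := \cM \setminus \{\sigma_0\}$, on which $\mu$ is carried by sets of finite measure. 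Splitting the off-diagonal sum according to which coordinate lies in $B$ and invoking \eqref{eq:sumIn}, I would obtain the pointwise (in $n$) bound
\[
\sum_{s \neq \xi}|\cI_s^n||\cI_\xi^n| \le 2\Big(\sum_{s \in B}|\cI_s^n|\Big)\Big(\sum_{\xi \in \cM}|\cI_\xi^n|\Big) = 4\sum_{s \in B}|\cI_s^n| .
\]

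The final step is to sum over $n$. Since each point of $[0,\infty)$ lies in at most two of the overlapping intervals $(n,n+2)$, one has $\sum_{n \ge 0}|\cI_s^n| \le 2|\cI_s|$ for every fixed $s$, and therefore
\[
\sum_{n \ge 0}\sum_{s \neq \xi}|\cI_s^n||\cI_\xi^n| \le 4\sum_{s \in B}\sum_{n \ge 0}|\cI_s^n| \le 8\sum_{s \in B}|\cI_s| < \infty,
\]
the finiteness resulting from $B$ being finite together with $|\cI_s| < \infty$ for all $s \in B$. Multiplying by $C$ gives convergence of the series in Lemma \ref{lem:ac01}, and Theorem \ref{th:ac01} yields $\sigma_{\ac}(\bH) = \R_{\ge 0}$. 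I expect the only point requiring care to be the combinatorial bookkeeping of the middle display --- verifying that every off-diagonal pair contributes an exceptional weight and correctly accounting for the bounded overlap multiplicity of the intervals $(n,n+2)$ --- but this is entirely elementary once the splitting is set up.
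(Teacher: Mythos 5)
Your proof is correct and follows essentially the same route as the paper: both reduce to Theorem \ref{th:ac01} via the identity of Lemma \ref{lem:ac01}, bound the off-diagonal sum pointwise in $n$ by $4\sum_{s\neq\sigma_0}|\cI_s^n|$ using \eqref{eq:sumIn} (your symmetric double-counting versus the paper's direct splitting is an immaterial difference), and then sum over $n$ using the overlap multiplicity $2$ of the intervals $(n,n+2)$ to arrive at the same bound $8C\sum_{s\neq\sigma_0}|\cI_s|<\infty$. Your explicit verification that $\cL=\infty$ and that finiteness of $\cS$ implies finiteness of $\cM$ is a nice touch of completeness, but the argument is the paper's.
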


\begin{proof}
Since $\cM$ is finite by assumption,
	\[
		C := \sup_{s , \xi \in \cM} \frac{ (s - \xi)^2 }{ s \xi}  < \infty.
	\]
Taking into account \eqref{eq:sumIn}, for every $n \in \Z_{\ge 0}$, we get
	\begin{align*}
		\sum_{s \in \cM}  \sum_{\xi \neq s}  |\cI_s^n| |\cI_\xi^n|  &
		=  |\cI_{\sigma_0}^n|  \sum_{\xi \neq \sigma_0}  |\cI_{\xi}^n| 
	+ \sum_{ s \neq \sigma_0 } |\cI_{s}^n| ( 2 -  |\cI_{s}^n|) \\
	&=  \sum_{s \neq \sigma_0}  |\cI_{s}^n|  (  |\cI_{\sigma_0}^n|  + 2 -  |\cI_{s}^n|)  \le 4  \sum_{s \neq \sigma_0}  |\cI_{s}^n|.
	\end{align*}
Therefore, we end up with the following estimate 
	\begin{align*}
	 \sum_{n\ge 0} \sum_{s \in \cM} \sum_{\xi \neq s}  |\cI_s^n| |\cI_\xi^n| \frac{(s - \xi)^2}{s \xi}  
	 \le 4 C  \sum_{n \geq 0}\sum_{s \neq \sigma_0}  |\cI_{s}^n|  \le 8 C \sum_{s \neq \sigma_0} | \cI_s| < \infty.
	\end{align*}
Taking into account Lemma \ref{lem:ac01} and then applying Theorem \ref{th:ac01}, we conclude that $\sigma_{\ac}  (\bH) =\R_{\ge0}$.
\end{proof}

Let us present one more example based on the use of Theorem \ref{th:ac02}.

\begin{example}
Let $\{\beta_k\}_{k \ge 0}$ and $\{\delta_k\}_{k \ge 0}$ be bounded sequences of positive numbers such that
\begin{align*}
	\sum_{k \ge 0} (\beta_k + \delta_k) =  \infty, \qquad \text{ and } \qquad \sum_{k \ge 0} \delta_k^3 < \infty.
\end{align*}
Let also $p,q,r \in \Z_{\ge 1}$ satisfy $p <  r < q$. Consider the following antitree 
\[
s_{3k+1} = q, \qquad s_{3k+2} = r, \qquad s_{3k+3} = p,
\]
for all $k\in\Z_{\ge 0}$.
Next we set
\begin{align*}
	\alpha_0 := q (1-p^2),  \qquad \alpha_1 := \frac{q}{r}(r^2-p^2), \qquad \alpha_2 := \frac{p}{r}(r^2-q^2),
\end{align*}
and equip $\cA$ with edge lengths
\begin{align*}
\ell_{3k} & = \beta_k, & \ell_{3k+1} & =\delta_k, & \ell_{3k+2} & = \frac{\alpha_1}{|\alpha_2|} \delta_k,
\end{align*}
for all $k\in\Z_{\ge 0}$.
Notice that by assumption $\alpha_0\le 0$, $\alpha_1>0$ and $\alpha_2<0$.
Moreover, observe that
\begin{align*}
	 \mu(x) - \frac{(pq)^2}{\mu(x)} = \begin{cases}	0, & x \in \cup_{k\ge1} I_{3k} \\
											\alpha_1, &  x \in \cup_{k\ge0} I_{3k+1} \\
											\alpha_2, &  x \in \cup_{k\ge0} I_{3k+2} 
		\end{cases}.
\end{align*}

Let us now apply Theorem \ref{th:ac02} with 
\begin{align*}
	b & = (pq)^2, & a & = \int_0^{\ell_0} \mu(x) - \frac{(pq)^2}{\mu(x)} \; dx = \ell_0 \alpha_0.
\end{align*}
Due to the choice of edge lengths, we have
\begin{align*}
	\int_0^{t_3}  \mu(x) - \frac{(pq)^2}{\mu(x)} \; dx =  \ell_0 \alpha_0 + \ell_1\alpha_1 + \ell_2 \alpha_2 = a,
\end{align*}
and for every $k \ge 1$, 
\begin{align*}
	\int_{t_{3k}}^{t_{3k+3}} \mu(x) - \frac{(pq)^2}{\mu(x)} \; dx = \ell_{3k+1} \alpha_1 + \ell_{3k+2} \alpha_2 = 0.
\end{align*}
In particular, we obtain for $x\in [t_{3k}, t_{3k+3})$ and $k \ge 1$ 
\begin{align*}
	\int_0^x \mu(x) - \frac{(pq)^2}{\mu(x)} \; dx - a =
	             \begin{cases} 0, & x \in I_{3k} \\
					\alpha_1 (x - t_{3k+1}),	& x \in I_{3k+1} \\
					\alpha_1 \delta_k + \alpha_2 (x - t_{3k+2}),	& x \in I_{3k+2} \end{cases}.
\end{align*}
Thus after some calculations we compute
\begin{align*}
	\int_{t_3}^\cL \frac{1}{\mu(x)}\Big| \int_0^{x} \Big(\mu(s) - \frac{pq}{\mu(s)}\Big)ds - a \Big|^2 dx=\Big ( \frac{\alpha_1^2 }{3 qr} +   \frac{\alpha_1^3}{3 pr |\alpha_2|}  \Big )  \sum_{k=1}^\infty \delta_k^3 <\infty.
\end{align*}
Applying Theorem \ref{th:ac02}, we conclude that $\sigma_{\rm ac} (\bH) = \R_{\ge 0}$. 
\hfill $\lozenge$
\end{example}
\fi


\bigskip

\end{document}